\newcommand\card{\operatorname{card}}
\newcommand \p{\partial}
\newcommand\reallywidecheck[1]{%
\savestack{\tmpbox}{\stretchto{%
  \scaleto{%
    \scalerel*[\widthof{\ensuremath{#1}}]{\kern-.6pt\bigwedge\kern-.6pt}%
    {\rule[-\textheight/2]{1ex}{\textheight}}
  }{\textheight}%
}{0.5ex}}%
\stackon[1pt]{#1}{\scalebox{-1}{\tmpbox}}%
}
\newtheorem{prop}{Proposition}
\newtheorem{thm}[prop]{Theorem}
\newtheorem{rem}{Remark}
\newtheorem{lem}[prop]{Lemma}
\newtheorem{res}{Result}
\begin{document}

\title[Barely Supercritical Klein-Gordon Equations] {Global existence of solutions of a loglog energy-supercritical Klein-Gordon equation}
\author{Tristan Roy}
\address{American University of Beirut, Department of Mathematics}
\email{tr14@aub.edu.lb}

\begin{abstract}
We prove global existence of the solutions of the loglog energy-supercritical Klein-Gordon equation $ \partial_{tt} u - \triangle u + u = -|u|^{\frac{4}{n-2}} u
\log^{\gamma} \left( \log ( 10 + |u|^{2} ) \right) $, with $n \in \{ 3,4,5 \}$,  $ 0 <  \gamma < \gamma_{n} $, and data $(u_{0},u_{1}) \in H^{k} \times H^{k-1}$ for $k > 1$ (resp. $ \frac{7}{3} > k > 1$) if $n \in \{ 3,4 \}$ (resp. $n=5$). The proof is by contradiction. Assuming that blow-up occurs at a maximal time of existence, we perform an analysis close to this time in order to find a finite bound of a Strichartz-type norm, which eventually leads to a contradiction with the blow-up assumption. 
\end{abstract}

\maketitle

\section{Introduction}

We shall study the solutions of the following defocusing nonlinear  Klein-Gordon equation in dimension $n$, $n \in \{ 3,4,5 \}$: 

\begin{equation}
\begin{array}{ll}
\partial_{tt} u - \triangle u + u & =  -|u|^{\frac{4}{n-2}} u g(|u|)
\end{array}
\label{Eqn:KgBar}
\end{equation}
Here $ g(|u|):= \log^{\gamma} \left( \log ( 10 + |u|^{2} ) \right)$ and $ \gamma >  0$. Here $\log$ denotes the natural logarithm. The solutions of (\ref{Eqn:KgBar}) satisfy three properties that we use throughout this paper:

\begin{itemize}

\item the \textit{time translation invariance}: if $u$ is a solution of (\ref{Eqn:KgBar}) and $t_{0}$ is a fixed number then
$\tilde{u}$ defined by $\tilde{u}(t,x) := u(t-t_0,x) $ is also a solution of (\ref{Eqn:KgBar}).

\item the \textit{space translation invariance}: if $u$ is a solutions of (\ref{Eqn:KgBar}) and $x_{0} \in \mathbb{R}^{n}$ then
$\tilde{u}$ defined by $\tilde{u}(t,x) := u(t,x-x_{0})$ is also a solution of (\ref{Eqn:KgBar}).

\item the \textit{time reversal invariance}: if $u$ is a solution of (\ref{Eqn:KgBar}) then $\tilde{u}$ defined by
$\tilde{u}(t,x) : = u(-t,x)$ is also a solution of (\ref{Eqn:KgBar}).

\end{itemize}
This equation has many connections with the following semilinear Klein-Gordon equation, $p > 1$

\begin{equation}
\begin{array}{ll}
\partial_{tt} v - \triangle v + v & = -|v|^{p-1} v,
\end{array}
\label{Eqn:KgPowerp}
\end{equation}
which in turn is related to the following semilinear wave equation

\begin{equation}
\begin{array}{ll}
\partial_{tt} v - \triangle v  & = -|v|^{p-1} v \cdot
\end{array}
\label{Eqn:WavePowerp}
\end{equation}
(\ref{Eqn:WavePowerp}) has a natural scaling: if $v$ is a solution of (\ref{Eqn:WavePowerp}) with data $\left( v(0),\partial_{t} v(0) \right) := (v_{0},v_{1})$ and if $\lambda \in \mathbb{R}$ is a parameter then $v_{\lambda}(t,x) := \frac{1}{\lambda^{\frac{2}{p-1}}} v \left( \frac{t}{\lambda}, \frac{x}{\lambda}
\right)$ is also a solution of (\ref{Eqn:WavePowerp}) but with data $ \left( v_{\lambda}(0,\cdot) ,\partial_{t} v_{\lambda}(0,\cdot) \right) :=
\left( \frac{1}{\lambda^{\frac{2}{p-1}}} v_{0} \left( \frac{\cdot}{\lambda} \right), \frac{1}{\lambda^{\frac{2}{p-1}+1}}  v_{1} \left( \frac{\cdot}{\lambda} \right) \right)$. If $s_{p}:= \frac{n}{2}- \frac{2}{p-1}$ then the $\dot{H}^{s_{p}}$ norm of the initial data is invariant under the
scaling: this is why (\ref{Eqn:WavePowerp}) is said to be $\dot{H}^{s_{p}}$- critical. If $p=1 + \frac{4}{n-2}$ then (\ref{Eqn:WavePowerp})
is said to be $\dot{H}^{1}-$ critical (or energy-critical) and (\ref{Eqn:KgPowerp}) is said to be energy-critical. The short-time behavior and the long-time behavior
of solutions of energy-critical Klein-Gordon equations have been extensively studied in the literature: in particular the linear asymptotic behavior
(i.e scattering) was proved in \cite{nakimrn}. If $ p > 1 + \frac{4}{n-2}$ then $s_{p} > 1$ and we are in the energy-supercritical regime. Since for all $\epsilon > 0$ there exist $c_{\epsilon} > 0$ such that $ \left| |u|^{\frac{4}{n-2}} u \right| \lesssim \left| |u|^{\frac{4}{n-2}} u g(|u|) \right| \leq c_{\epsilon} \max{(1, | |u|^{\frac{4}{n-2}+ \epsilon} u | ) }$ then the nonlinearity of (\ref{Eqn:KgBar}) is said to be barely energy-supercritical. Barely energy-supercritical equations have been studied extensively in the literature: see e.g \cite{bulutdods,shihhsiwei,triroyrad, triroyradfoc,triroyjensen,triroysmooth,tao}. We write below a local-wellposedness result:

\begin{prop}
Let $n \in \{ 3,4,5 \}$. If $n \in \{ 3,4 \}$ then let $ 1 < k $ and let $F([0,T_{l}]):=  L_{t}^{\frac{2(n+1)}{n-1}} H^{k-\frac{1}{2}, \frac{2(n+1)}{n-1}} ([0,T_{l}])  $.
If $n =5$ then let $ 1 < k < \frac{7}{3}$ and let
$F([0,T_{l}]):=  L_{t}^{\frac{2(n+1)}{n-1}} H^{k-\frac{1}{2}, \frac{2(n+1)}{n-1}} ([0,T_{l}]) \cap L_{t}^{2} H^{k-1, \frac{2n}{n-3}} ([0,T_{l}]) $. Let $(u_{0},u_{1}) \in H^{k} \times H^{k-1}$ and $ M \in \mathbb{R} $ be such that $\| (u_0,u_{1}) \|_{H^{k} \times H^{k-1} } \leq M$. Then there exists $ \delta := \delta(M) > 0 $ that has the following property: if $ T_{l} > 0 $ is a number such that if

\begin{equation}
\begin{array}{ll}
\left\| \cos{( t \langle D \rangle )} u_{0} +  \frac{\sin \left( t \langle D \rangle \right) }{ \langle D \rangle} u_{1}  \right\|_{L_{t}^{\frac{2(n+1)}{n-2}} L_{x}^{\frac{2(n+1)}{n-2}} ([0,T_{l}])}  & \leq \delta,
\end{array}
\label{Eqn:SmallLinProp}
\end{equation}
then there exists a unique

\begin{equation}
\begin{array}{ll}
u \in \mathcal{C}([0,T_{l}], H^{k} ) \cap \mathcal{C}^{1} ([0,T_{l}],H^{k-1}) \cap F ([0,T_{l}]) \cap
\mathcal{B} \left( L_{t}^{\frac{2(n+1)}{n-2}} L_{x}^{\frac{2(n+1)}{n-2}}([0,T_{l}]) ; 2 \delta  \right)
\end{array}
\label{Eqn:Spaceu}
\end{equation}
such that

\begin{equation}
\begin{array}{l}
u(t) = \cos{(t \langle D \rangle) } u_{0} + \frac{\sin {(t \langle D \rangle) }}{\langle D \rangle} u_{1} -
\int_{0}^{t} \frac{\sin{ \left( (t-t') \langle D \rangle \right) }}{\langle D \rangle} \left( |u(t^{'})|^{\frac{4}{n-2}} u(t^{'}) g(|u(t^{'})|) \right) \, dt^{'}
\end{array}
\label{Eqn:DistribNlkg}
\end{equation}
is satisfied in the sense of distributions. Here $ \mathcal{B} \left( L_{t}^{\frac{2(n+1)}{n-2}} L_{x}^{\frac{2(n+1)}{n-2}}([0,T_{l}]) ; \bar{r}  \right) $
denotes the closed ball centered at the origin with radius $\bar{r}$ in $ L_{t}^{\frac{2(n+1)}{n-2}} L_{x}^{\frac{2(n+1)}{n-2}}([0,T_{l}]) $.
\label{Prop:LocalWell}
\end{prop}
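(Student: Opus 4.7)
The plan is to set up a contraction-mapping argument based on Strichartz estimates for the Klein-Gordon propagator. Writing $u_{l}(t) := \cos(t \langle D \rangle) u_{0} + \frac{\sin(t \langle D \rangle)}{\langle D \rangle} u_{1}$ for the free evolution, I would look for a fixed point of the Duhamel map
\[
\Phi(u)(t) := u_{l}(t) - \int_{0}^{t} \frac{\sin\bigl( (t-t') \langle D \rangle \bigr)}{\langle D \rangle} \Bigl( |u|^{\frac{4}{n-2}} u \, g(|u|) \Bigr)(t') \, dt'
\]
inside a closed ball of the product space appearing in (\ref{Eqn:Spaceu}), imposing both the smallness $\|u\|_{L^{\frac{2(n+1)}{n-2}}_{t,x}([0,T_{l}])} \le 2 \delta$ and a size bound of order $M$ on the remaining $F \cap \mathcal{C}_{t} H^{k} \cap \mathcal{C}^{1}_{t} H^{k-1}$ norms. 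Completeness of such a ball with respect to the $L^{\frac{2(n+1)}{n-2}}_{t,x}$ metric is standard, so a contraction in that weaker metric combined with a priori propagation of the stronger norms suffices.

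The first step is to apply Strichartz estimates adapted to $\langle D \rangle^{-1} \sin(t \langle D \rangle)$ in order to bound each of the norms defining $F([0,T_{l}])$ and $\mathcal{C}([0,T_{l}], H^{k}) \cap \mathcal{C}^{1}([0,T_{l}], H^{k-1})$ by the corresponding norm of $u_{l}$ plus a dual Strichartz norm of the forcing $|u|^{\frac{4}{n-2}} u \, g(|u|)$. The $L^{\frac{2(n+1)}{n-2}}_{t,x}$ smallness of $u_{l}$ is given by hypothesis (\ref{Eqn:SmallLinProp}), while the remaining Strichartz norms of $u_{l}$ are controlled by $M$.

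The bulk of the work then lies in the nonlinear estimates. Since $g$ grows logarithmically, for every $\epsilon > 0$ there is a constant $c_{\epsilon}$ with $g(|u|) \le c_{\epsilon}(1 + |u|^{\epsilon})$; choosing $\epsilon$ small reduces the analysis to the pure power nonlinearities $|u|^{\frac{4}{n-2}} u$ and $|u|^{\frac{4}{n-2} + \epsilon} u$. The fractional derivatives of order $k - \frac{1}{2}$ required for the $F$-norm are treated via the fractional chain rule and the Kato--Ponce inequality, distributing derivatives between one factor carrying the $F$-norm and the remaining factors carrying the small $L^{\frac{2(n+1)}{n-2}}_{t,x}$ norm. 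In dimension $n = 5$ the pair $(t,x)$-exponent $\frac{2(n+1)}{n-1}$ alone does not close the estimate for the fractional power $\frac{4}{3}$, which is precisely why the extra slot $L^{2}_{t} H^{k-1, \frac{2n}{n-3}}_{x}$ enters $F([0,T_{l}])$; the upper bound $k < \frac{7}{3}$ is the threshold ensuring that the fractional chain rule applied to $|u|^{\frac{4}{3}} u$ remains valid at order $k-1$.

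Finally, the contraction is checked in the $L^{\frac{2(n+1)}{n-2}}_{t,x}$ metric alone, through the pointwise difference bound
\[
\Bigl| |u|^{\frac{4}{n-2}} u \, g(|u|) - |v|^{\frac{4}{n-2}} v \, g(|v|) \Bigr| \lesssim_{\epsilon} \bigl( |u|^{\frac{4}{n-2}+\epsilon} + |v|^{\frac{4}{n-2}+\epsilon} \bigr) |u - v|
\]
combined with H\"older and Strichartz; the smallness of $\|u\|, \|v\| \le 2 \delta$ in that norm yields a contraction factor strictly less than one once $\delta = \delta(M)$ is fixed small enough. The main obstacle I anticipate is the fractional-derivative nonlinear estimate, especially in $n = 5$ where the non-integer power $\frac{4}{3}$ interacts with the logarithmic loss $g$ and forces a delicate $\epsilon$-loss in the fractional chain rule; tracking which Strichartz exponents remain admissible under this loss is where all the technical care concentrates. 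Uniqueness in the prescribed ball and the Duhamel identity (\ref{Eqn:DistribNlkg}) then follow directly from the fixed-point construction.
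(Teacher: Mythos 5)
Your high-level scheme --- a fixed point for the Duhamel map $\Phi$ in a ball modeled on (\ref{Eqn:Spaceu}), with the $L^{\frac{2(n+1)}{n-2}}_{t,x}$ smallness entering via (\ref{Eqn:SmallLinProp}), and the nonlinear estimates carried out at regularity $k-\tfrac{1}{2}$ (resp.\ $k-1$ for $n=5$) through fractional chain/Leibniz rules, with the extra slot $L^2_t H^{k-1,\frac{2n}{n-3}}$ handling the low H\"older smoothness of $|u|^{4/3}u$ and $k<\tfrac{7}{3}$ being exactly the threshold for the chain rule at order $k-1$ --- coincides with what the paper does in Appendix~$B$ (spaces $\mathcal{Z}_1 \cap \mathcal{Z}_2$, the map $\Psi$, and Lemma~\ref{lem:NonlinearControl}). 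The one genuine methodological divergence is the contraction step. You propose to metrize the intersection ball by the weak $L^{\frac{2(n+1)}{n-2}}_{t,x}$ norm only; the paper instead runs the contraction in the full $\mathcal{Z}_1 \cap \mathcal{Z}_2$ norm, which forces it to estimate $\langle D\rangle^{k-\frac{1}{2}}\bigl(h(u,\bar u)-h(v,\bar v)\bigr)$ and replay the Leibniz machinery on differences over several pages. Your route, if carried through, is genuinely shorter. Two points need care, though. First, the ball in $\mathcal{C}([0,T_l],H^k)\cap\mathcal{C}^1([0,T_l],H^{k-1})\cap F([0,T_l])$ is not closed under the $L^{\frac{2(n+1)}{n-2}}_{t,x}$ metric: weak-$*$ compactness and lower semicontinuity give the limit in $L^\infty_t H^k \cap W^{1,\infty}_t H^{k-1}$ with the right bounds, but strong time-continuity must be recovered a posteriori from (\ref{Eqn:DistribNlkg}); calling this ``standard'' hides a step. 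Second, the H\"older/Strichartz bookkeeping does not close on the small norm alone --- the dual exponent forced by the $m=1$ Strichartz pair $\bigl(\frac{2(n+1)}{n-2},\frac{2(n+1)}{n-2}\bigr)$ is $(1,2)$, so one factor of $|u|^{\frac{4}{n-2}}$ must be placed in a norm controlled by $M$ rather than by $\delta$, and the contraction constant only becomes small after interpolating that factor against $\|u\|_{L^{\frac{2(n+1)}{n-2}}_{t,x}} \le 2\delta$. Those interpolations need to be written out.

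There is also a concrete error in the displayed pointwise Lipschitz bound. Since $g(0)=\log^{\gamma}\log 10>0$, the derivative of $z\mapsto |z|^{\frac{4}{n-2}}z\,g(|z|)$ is $\gtrsim |z|^{\frac{4}{n-2}}$ near the origin, so the correct statement is
\begin{equation}
\begin{array}{l}
\bigl| |u|^{\frac{4}{n-2}} u\,g(|u|) - |v|^{\frac{4}{n-2}} v\,g(|v|) \bigr| \lesssim_{\epsilon} \bigl( |u|^{\frac{4}{n-2}} + |v|^{\frac{4}{n-2}} + |u|^{\frac{4}{n-2}+\epsilon} + |v|^{\frac{4}{n-2}+\epsilon}\bigr)\,|u-v| \cdot
\end{array}
\nonumber
\end{equation}
Your bound omits the pure-power terms $|u|^{\frac{4}{n-2}}+|v|^{\frac{4}{n-2}}$, and it fails as $|u|,|v|\to 0$ (take $v=0$ to see this). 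This is the dominant contribution to the contraction constant, so it must be restored; once it is, the argument is repairable along the lines above.
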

The proof of Proposition \ref{Prop:LocalWell} is given in Appendix $B$.

\begin{rem}
A number $T_{l}$ that satisfies the smallness condition above is called a time of local existence.
\end{rem}

\begin{rem}
The proof of Proposition \ref{Prop:LocalWell} shows that one can choose $\delta$ as a function that decreases as $M$ increases and that
goes to zero as $M \rightarrow \infty$.
\label{Rem:Delta}
\end{rem}

This allows by a standard procedure to define the notion of maximal time interval of existence $I_{max}:= (T_{-},T_{+})$, that is the union of all the open intervals $I$ containing $0$ such that there exists a (unique) solution $ u \in \mathcal{C} ( I, H^{k} ) \cap \mathcal{C}^{1} (I,H^{k-1}) \cap
F (I) $ that satisfies (\ref{Eqn:DistribNlkg}) for all $ t \in I $.

\begin{rem}
In the sequel we denote by $H^{k}-$ solution of (\ref{Eqn:KgBar}) a distribution constructed by this standard procedure that

\begin{itemize}
\item satisfies (\ref{Eqn:DistribNlkg}) for some $(u_{0},u_{1}) \in H^{k} \times H^{k-1}$ and for all $t \in I_{max}$
\item lies in $ \mathcal{C} ( I, H^{k} ) \cap \mathcal{C}^{1} (I,H^{k-1}) \cap F (I) $ for all interval $I \subset I_{max} $
\end{itemize}

\end{rem}

\begin{rem}
Note that if $u$ is an $H^{k}-$ solution of (\ref{Eqn:KgBar})  then $\| u \|_{L_{t}^{\frac{2(n+1)}{n-2}} L_{x}^{\frac{2(n+1)}{n-2}} (I)} < \infty $ for
all $I \subsetneq I_{max}$.
\footnote{Indeed we may assume WLOG that $I:= [a,b]$. Let $r$ be such that $\frac{n-2}{2(n+1)} + \frac{n}{r} = \frac{n}{2}- \frac{1}{2}$. Then
$\| u \|_{L_{t}^{\frac{2(n+1)}{n-2}} L_{x}^{\frac{2(n+1)}{n-2}} (I)}  \lesssim \left\| \langle D \rangle^{k- \frac{1}{2}} u \right\|_{L_{t}^{\frac{2(n+1)}{n-2}} L_{x}^{r} (I)} $.
Interpolation shows that there exists $\theta \in [0,1]$ such that $ \left\| \langle D \rangle^{k- \frac{1}{2}} u \right\|_{L_{t}^{\frac{2(n+1)}{n-2}} L_{x}^{r} (I)} \lesssim
\left\| \langle D \rangle^{k- \frac{1}{2}} u \right\|^{\theta}_{L_{t}^{\frac{2(n+1)}{n-1}} L_{x}^{\frac{2(n+1)}{n-1}} (I)}
\left\| \langle D \rangle^{k- \frac{1}{2}} u \right\|^{1 - \theta}_{L_{t}^{\infty} L_{x}^{\frac{2n}{n-1}} (I)} $. Since
$ \left\| \langle D \rangle^{k- \frac{1}{2}} u \right\|_{L_{t}^{\infty} L_{x}^{\frac{2n}{n-1}} (I)} \lesssim
\left\| \langle D \rangle^{k} u \right\|_{L_{t}^{\infty} L_{x}^{2} (I)}$, we get
$\| u \|_{L_{t}^{\frac{2(n+1)}{n-2}} L_{x}^{\frac{2(n+1)}{n-2}} (I)} < \infty $.
}
\\
\end{rem}

\begin{rem}
In the sequel we say that $u$ is an $H^{k}-$ solution of (\ref{Eqn:KgBar}) on an interval $I$ if $u$ is an $H^{k}-$ solution of (\ref{Eqn:KgBar}) and
$I \subset I_{max}$.
\end{rem}
Next we investigate the asymptotic behavior of $H^{k}$-solutions of (\ref{Eqn:KgBar}) for $n \in \{ 3,4,5 \}$. We first prove in Section \ref{Sec:CritBlowUp} the following proposition:

\begin{prop}
Let $u$ be an $H^{k}-$ solution of (\ref{Eqn:KgBar}). If $|I_{max}|< \infty $ then

\begin{equation}
\begin{array}{ll}
\| u \|_{L_{t}^{\frac{2(n+1)}{n-2}} L_{x}^{\frac{2(n+1)}{n-2}} (I_{max})} & = \infty
\end{array}
\end{equation}
\label{Prop:GlobWellPosedCrit}
\end{prop}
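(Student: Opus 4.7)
The argument is the standard blow-up criterion and proceeds by contradiction. Suppose that $|I_{max}|<\infty$ yet
$\|u\|_{L_{t}^{\frac{2(n+1)}{n-2}}L_{x}^{\frac{2(n+1)}{n-2}}(I_{max})}<\infty$.
By the time reversal invariance one may assume that $T_{+}<\infty$. The goal is to show that $(u(t),\partial_{t}u(t))$ admits a limit in $H^{k}\times H^{k-1}$ as $t\to T_{+}^{-}$; once this is established, Proposition \ref{Prop:LocalWell} applied at $T_{+}$ (together with time translation invariance and Remark \ref{Rem:Delta}) produces a solution on $[T_{+},T_{+}+\tau]$ for some $\tau>0$, which glued to $u$ contradicts the maximality of $T_{+}$.

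\textbf{Step 1 (partition and bootstrap).} First, using the finiteness of $\|u\|_{L_{t,x}^{\frac{2(n+1)}{n-2}}(I_{max})}$, partition $I_{max}$ into finitely many consecutive intervals $I_{1},\dots,I_{J}$ so that on each $I_{j}$
\begin{equation*}
\|u\|_{L_{t}^{\frac{2(n+1)}{n-2}}L_{x}^{\frac{2(n+1)}{n-2}}(I_{j})}\leq\eta,
\end{equation*}
with $\eta>0$ a small absolute constant to be chosen. On each $I_{j}=[t_{j-1},t_{j}]$, starting from data $(u(t_{j-1}),\partial_{t}u(t_{j-1}))\in H^{k}\times H^{k-1}$ with bound $M_{j-1}$, I would apply Strichartz estimates to the Duhamel formula (\ref{Eqn:DistribNlkg}) together with the fractional Leibniz and chain rules (as used in the local well-posedness of Proposition \ref{Prop:LocalWell}) to obtain
\begin{equation*}
\|u\|_{F(I_{j})}+\|(u,\partial_{t}u)\|_{L_{t}^{\infty}(H^{k}\times H^{k-1})(I_{j})}\lesssim M_{j-1}+\|u\|_{L_{t,x}^{\frac{2(n+1)}{n-2}}(I_{j})}^{\frac{4}{n-2}}\,\|u\|_{F(I_{j})}\,\Phi,
\end{equation*}
where $\Phi$ is a slowly growing factor coming from the loglog weight $g(|u|)$. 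Choosing $\eta$ small enough absorbs the second term and yields $\|u\|_{F(I_{j})}+\|(u,\partial_{t}u)\|_{L_{t}^{\infty}(H^{k}\times H^{k-1})(I_{j})}\leq C M_{j-1}$. Iterating over $j=1,\dots,J$ produces a finite bound $M_{J}<\infty$ for $\|(u,\partial_{t}u)\|_{L_{t}^{\infty}(H^{k}\times H^{k-1})(I_{max})}$ and for $\|u\|_{F(I_{max})}$.

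\textbf{Step 2 (Cauchy in $H^{k}\times H^{k-1}$).} Given the uniform $H^{k}\times H^{k-1}$ bound and $\|u\|_{F(I_{max})}<\infty$, for any $T_{-}<t<t'<T_{+}$ I would write
\begin{equation*}
\begin{array}{l}
u(t')-u(t)=\bigl(\cos((t'-t)\langle D\rangle)-I\bigr)u(t)+\dfrac{\sin((t'-t)\langle D\rangle)}{\langle D\rangle}\partial_{t}u(t)\\[4pt]
\qquad\qquad-\displaystyle\int_{t}^{t'}\dfrac{\sin((t'-s)\langle D\rangle)}{\langle D\rangle}\bigl(|u(s)|^{\frac{4}{n-2}}u(s)\,g(|u(s)|)\bigr)\,ds,
\end{array}
\end{equation*}
and estimate each term in $H^{k}$ (analogously for $\partial_{t}(u'-u)$ in $H^{k-1}$). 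The first two terms tend to zero as $t,t'\to T_{+}^{-}$ by strong continuity of the half-wave group on $H^{k}\times H^{k-1}$, and the Duhamel term tends to zero by the Strichartz/nonlinear estimates applied on the vanishing interval $[t,t']$ (absolute continuity of the $F$-norm), using the fact that $\|u\|_{L_{t,x}^{\frac{2(n+1)}{n-2}}([t,t'])}\to 0$. Hence $(u(t),\partial_{t}u(t))$ is Cauchy as $t\to T_{+}^{-}$.

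\textbf{Step 3 (extension and contradiction).} Calling $(u_{\ast},u_{\ast,1})\in H^{k}\times H^{k-1}$ the resulting limit, Proposition \ref{Prop:LocalWell} with data $(u_{\ast},u_{\ast,1})$ gives a solution $\widetilde u$ on an interval $[T_{+},T_{+}+\tau]$ with $\tau>0$; gluing produces a solution on $(T_{-},T_{+}+\tau)\supsetneq I_{max}$, contradicting the definition of $T_{+}$.

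\textbf{Expected main difficulty.} The only non-routine point is the bootstrap of Step 1: one must show that the loglog weight $g(|u|)$ contributes only a factor that grows sub-polynomially in $\|u\|_{L_{t}^{\infty}H^{k}}$ and therefore can be absorbed by choosing $\eta$ small. Since $k>1$ and $n\in\{3,4,5\}$, Sobolev embedding and the slow growth of $\log^{\gamma}\log$ make this harmless, but the precise nonlinear estimate must mirror the one used in the proof of Proposition \ref{Prop:LocalWell} (which is deferred to Appendix $B$). All the remaining steps are standard consequences of Strichartz estimates and the Duhamel formula.
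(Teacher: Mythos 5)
Your proposal follows the same overall strategy as the paper's proof: establish finiteness of the $X_k$-type norms on $I_{max}$ by a partition-and-bootstrap argument, then invoke the local theory near $T_+$ to extend the solution and contradict maximality. Your Step 3 (Cauchy property of $\left(u(t),\partial_t u(t)\right)$ followed by Proposition \ref{Prop:LocalWell} applied at the limit) is a cosmetic variant of the paper's conclusion, which applies Proposition \ref{Prop:LocalWell} directly at a time $\bar t$ just before $b_{\max}$ using smallness of the linear flow plus monotone convergence; both routes are fine.

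The genuine gap is in your Step 1, which treats the bootstrap at regularity $k$ as a direct analogue of the energy-critical argument with a loglog correction. The nonlinear estimate needed is subtler. Proposition \ref{Prop:LeibnJensen}, which produces the crucial factor $g\left(X_{\check k}\right)$ rather than a polynomial in $X_{\check k}$, is only valid at regularities $\check k$ below a threshold (through the hypothesis $\beta>k'-1$). For $k>\tilde k:=\min(k,\bar k)$ the available estimate is Lemma \ref{lem:NonlinearControl}, whose right-hand side contains polynomial powers $\langle X_{\tilde k}(I,u)\rangle^{\bar C}$ and $\langle X_{k-\frac14}(I,u)\rangle^{\bar C}$, not a $g$-factor; a continuity argument at level $k$ alone cannot be closed without already having the lower-regularity quantities under control. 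The paper therefore proceeds in two stages: it first bounds $X_{\tilde k}(I_{\max},u)$ via Proposition \ref{Prop:LeibnJensen}, and only then raises the regularity to $k$ by an induction in quarter-steps via Lemma \ref{lem:NonlinearControl}. Your Step 1 skips this two-stage structure. A related point you gloss: because the $g$-factor grows with the accumulated $H^k$-bound $M_{j-1}\lesssim(2C)^j M_0$, the smallness threshold in the partition cannot simply be ``a small absolute constant $\eta$''; the paper instead uses $\eta_j\approx\epsilon\, g^{-\frac{n-2}{4}}\left((2C)^j M_0\right)$ and closes the counting via the arithmetic fact that $\sum_j \eta_j^{\frac{2(n+1)}{n-2}}=\infty$. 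A fixed $\eta$ can be salvaged with extra arithmetic (because $\eta^{\beta}(\log(1/\eta))^{\gamma}\to 0$), but your proposal asserts the absorption without justification.
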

We then provide the reader with a criterion for proving (by contradiction) global existence of $H^{k}-$ solutions of (\ref{Eqn:KgBar})
(i.e all the $H^{k}-$ solutions of (\ref{Eqn:KgBar}) exist for all time: in other words, $|I_{max}| = \infty$ for all data $(u_{0},u_{1}) \in H^{k} \times H^{k-1}$): see remark below.

\begin{rem} (Criterion for global existence)

Let $u$ be an $H^{k}-$ solution of (\ref{Eqn:KgBar}). Let $\epsilon_{0}$ be a constant such that $ 0 <  \epsilon_{0} \lesssim 1 $. Assume that we can prove that there
there exists a function $f$ that has finite values such that for all $(u_{0},u_{1}) \in H^{k} \times H^{k-1}$ the $H^{k}-$ solution $u$ with data $(u_{0},u_{1})$ satisfies the estimate below:

\begin{equation}
\begin{array}{ll}
\| u \|_{L_{t}^{\frac{2(n+1)}{n-2}} L_{x}^{\frac{2(n+1)}{n-2}} \left( [0, \epsilon_{0}) \right)} & \leq  f  \left(  \| (u_{0},u_{1}) \|_{H^{k} \times H^{k-1}} \right)
\end{array}
\label{Eqn:StrichZeroEpsZero}
\end{equation}
Then global existence of $H^{k}-$ solutions of (\ref{Eqn:KgBar}) holds. Indeed, if not we see from Proposition \ref{Prop:GlobWellPosedCrit} and time reversal invariance that there exist data $(u_{0},u_{1}) \in H^{k} \times H^{k-1}$ and a constant $ 0 < \bar{\epsilon} < \epsilon_{0}$ such that

\begin{equation}
\begin{array}{ll}
\| u \|_{L_{t}^{\frac{2(n+1)}{n-2}} L_{x}^{\frac{2(n+1)}{n-2}} \left( [T_{+} - \bar{\epsilon}, T_{+} ) \right)} & =   \infty
\end{array}
\nonumber
\end{equation}
for $u$ an $H^{k}-$ solution of (\ref{Eqn:KgBar}) with data $(u_{0},u_{1})$. Moreover  $\| u \|_{L_{t}^{\frac{2(n+2)}{n-2}} L_{x}^{\frac{2(n+2)}{n-2}} (K) } < \infty $ for
$K$ interval such that $K \subsetneqq  [T_{+} - \bar{\epsilon}, T_{+} )$. By time translation ( with $\bar{t} := T_{+} - \bar{\epsilon} $ ) we see that there exists an
$H^{k}-$ solution (that we still denote by $u$) such that $ \| u \|_{L_{t}^{\frac{2(n+1)}{n-2}} L_{x}^{\frac{2(n+1)}{n-2} } \left( [0,\bar{\epsilon}) \right)} = \infty $ and
$ \| u \|_{L_{t}^{\frac{2(n+1)}{n-2}} L_{x}^{\frac{2(n+1)}{n-2} } (K)} < \infty $ for $K$ interval such that $ K \subsetneqq  [0,\bar{\epsilon})$. This contradicts
(\ref{Eqn:StrichZeroEpsZero}).

\label{Rem:GlobalCrit}
\end{rem}

The main result of this paper is a global existence result for (\ref{Eqn:KgBar}), namely

\begin{thm}
Let $n \in \{ 3,4,5 \}$. \\
Let $I_n$ defined as follows: if $n \in \{3,4\}$ then $I_n := (1,\infty)$ and if
$n=5$ then  $I_n := \left( 1, \frac{7}{3} \right)$. Let $\gamma_{n}$ be defined as follows:

\begin{equation}
\gamma_{n} := \left\{
\begin{array}{l}
\frac{1}{6}, \; n =3 \\
\frac{4}{49}, \; n = 4 \\
\frac{1}{22}, \; n = 5
\end{array}
\right.
\nonumber
\end{equation}
Let $u$ be an $H^{k}-$ solution of (\ref{Eqn:KgBar}) with $ 0 < \gamma < \gamma_{n}$ and with data $(u_{0},u_{1}) \in H^{k} \times H^{k-1}$, $k \in I_{n}$.
Then $u$ exists for all time.

\label{thm:main}
\end{thm}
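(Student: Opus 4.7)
By Proposition \ref{Prop:GlobWellPosedCrit} and Remark \ref{Rem:GlobalCrit}, combined with time reversal invariance, it suffices to exhibit a finite-valued function $f$ such that every $H^{k}-$ solution $u$ of (\ref{Eqn:KgBar}) with $\|(u_{0},u_{1})\|_{H^{k} \times H^{k-1}} \leq M$ satisfies
\[
\| u \|_{L_{t}^{\frac{2(n+1)}{n-2}} L_{x}^{\frac{2(n+1)}{n-2}}\left([0,\epsilon_{0})\right)} \leq f(M)
\]
for a small but fixed $\epsilon_{0} = \epsilon_{0}(M)>0$. I would carry this out by a bootstrap and iteration scheme patterned on \cite{nakimrn,triroyjensen,triroysmooth}, whose three main ingredients are: a priori energy control, an application of the energy-critical scattering bound, and a high-low frequency iteration regulated by the Jensen inequality.

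First I would use energy conservation together with (\ref{Eqn:EquivF}) to obtain the global bound $\|u(t)\|_{H^{1}} + \|\partial_{t} u(t)\|_{L^{2}} \lesssim \langle M \rangle^{k_{2}^{*}/2}$. The slow growth of $g$, together with Sobolev embedding at the energy level, then gives that $\gu$ is tame in the spaces in which it needs to be placed. Next I would invoke the Nakanishi scattering theorem \cite{nakimrn} applied to the energy-critical Klein-Gordon equation (i.e.\ to (\ref{Eqn:KgPowerp}) with $p = 1 + 4/(n-2)$) to produce a finite bound $B(M)$, depending only on the energy, for the $L^{\frac{2(n+1)}{n-2}}_{t,x}(\mathbb{R})$ norm of the associated energy-critical solution.

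The heart of the argument would be a partition of $[0,\epsilon_{0})$ into consecutive subintervals $J$ on each of which $\|u\|_{L^{\frac{2(n+1)}{n-2}}_{t,x}(J)} = \eta$ for a small fixed $\eta$. On each $J$ I would split $u = \ulJ + \unlJ$ into its linear and Duhamel parts, and apply Klein-Gordon Strichartz estimates to propagate the $H^{k}$ regularity, with the $L_{t,x}^{\frac{2(n+1)}{n-2}}$ smallness of $u$ absorbing a favorable fraction of the nonlinear contribution and the factor $\gu$ contributing a slowly growing loss. A high-low frequency decomposition at thresholds $\Nhigh$ and $\Nlow$ isolates the energy part from a small high-frequency tail of $H^{k}$-mass; the number $\card(\{J\})$ of intervals is controlled by $B(M)$ through a pigeonholing relative to the energy-critical scattering bound. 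Summing over $J$ and iterating, one gets inequalities involving $\gM$, $\gfivesixM$, $\gonesixM$, and their analogues for $\gu$, at which point a Jensen-type inequality over the intervals $J$ converts sums of slowly growing terms into a single logarithm of a controlled average, producing the required finite bound $f(M)$.

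The main obstacle will be the bookkeeping in this iteration: one must simultaneously track how $\|u\|_{H^{k}(J)}$ can grow from one interval to the next and how the slow growth of $\gu$ is matched, through Jensen, against the gain from the energy-critical bound $B(M)$. The specific thresholds $\gamma_{n} = 1/6, 4/49, 1/22$ for $n = 3,4,5$ correspond precisely to the exponents at which these losses and gains balance, and choosing $\gamma < \gamma_{n}$ strictly leaves just enough slack to close the bootstrap. Once the bound $f(M)$ is obtained, global existence follows from Remark \ref{Rem:GlobalCrit}.
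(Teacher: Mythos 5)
Your high-level scaffolding matches the paper's: reduce via Proposition \ref{Prop:GlobWellPosedCrit}, Remark \ref{Rem:GlobalCrit}, and time reversal to producing a finite bound $f(M)$ on the Strichartz-type norm over $[0,\epsilon_0)$; partition into subintervals on which that norm is small; close a continuity argument on each interval with Strichartz estimates and the fractional Leibniz rule of Proposition \ref{Prop:LeibnJensen}; iterate. But the central step of your outline is wrong. You propose to control the number of intervals by "invoking the Nakanishi scattering theorem applied to the energy-critical Klein-Gordon equation" to get a bound $B(M)$ depending only on the energy. That theorem bounds the spacetime norm of solutions of the \emph{energy-critical} equation (\ref{Eqn:KgPowerp}); your $u$ solves (\ref{Eqn:KgBar}), whose nonlinearity differs from the critical one by the unbounded factor $g(|u|)$. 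To transfer the bound to $u$ you would need a perturbation argument with smallness of the error term $|u|^{\frac{4}{n-2}}u\,(g(|u|)-1)$ in a dual-Strichartz norm, which in turn requires smallness of $u$ in $L^\infty$ — exactly what is unavailable, and whose unavailability is the entire difficulty of the supercritical regime. There is no auxiliary energy-critical solution to compare against, and no obvious pigeonholing that substitutes for one.

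What the paper actually uses, and what your proposal omits entirely, is Proposition \ref{Prop:BoundLong}: a Strichartz bound for the barely-supercritical $u$ itself, of the form $\|u\|_{L_t^{\frac{2(n+1)}{n-2}} L_x^{\frac{2(n+1)}{n-2}}(K)} \leq C_1^{C_1 g^{b_n+}(M)}$ conditional on $X_k(K,u) \leq M$. It is proved not by citing Nakanishi's theorem but by adapting his \emph{methods}: a Bourgain-type concentration argument at a single dyadic scale (Lemma \ref{lem:Concentration}), Morawetz-type and light-cone decay estimates (Lemma \ref{lem:EstDecay}), with the $g$-losses tracked by the Jensen-type Leibniz rules. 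Crucially the resulting bound is \emph{not} independent of $M$ — it grows like $\exp(C g^{b_n+}(M))$ — and this growth is what makes the bootstrap delicate. With at most $\approx \bar{C}^{\bar{C} g^{b_n+}(M)}$ subintervals and the factor $2C$ per interval, one lands on $X_{\tilde{k}}([0,\bar{t}],u) \leq \bar{C}^{\bar{C}^{\bar{C} g^{b_n+}(M)}}$, which is $<M$ for $M$ large precisely when $\gamma b_n < 1$, i.e.\ $\gamma < \gamma_n = 1/b_n$. This is where the numbers $\gamma_3 = 1/6$, $\gamma_4 = 4/49$, $\gamma_5 = 1/22$ come from; your proposal asserts the threshold but never produces the inequality that it governs. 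Finally, the "high-low frequency decomposition at thresholds $\Nhigh$ and $\Nlow$" does not occur in this proof; those macros appear in the preamble but are never used, and the frequency analysis in Proposition \ref{Prop:BoundLong} is a concentration argument at one scale rather than a splitting into an energy piece plus a high-frequency tail.
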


We now explain the main interest of this paper. Our goal is to prove a global existence result for solutions of loglog energy-supercritical Klein-Gordon equations of the form
(\ref{Eqn:KgBar}). In our previous work (see \cite{triroysmooth}), we have managed to prove global existence and scattering of solutions of $3d-$ loglog energy-supercritical wave equations of the form $\partial_{tt} u - \triangle u = -|u|^{4} u \log^{\gamma} \left( \log ( 10 + |u|^{2}) \right)$ for a range of positive $\gamma$ s and for data
$(u_{0},u_{1}) \in \tilde{H}^{2} \cap \tilde H^{1}$ \footnote{Recall that $\tilde{H}^{m} := \dot{H}^{m} \cap \dot{H}^{m-1}$}. The scattering follows from the finiteness of a Strichartz-type norm of the solution on $\mathbb{R}$ (namely $\| u \|_{L_{t}^{4} L_{x}^{12}(\mathbb{R})}$) and that of the norm
$ \left\| (u,\partial_{t} u) \right\|_{L_{t}^{\infty}\tilde{H}^{2}(\mathbb{R}) \times L_{t}^{\infty} H^{1} (\mathbb{R})}$: see Appendix $D$ \footnote{see also introduction in \cite{duyroy}}. The finiteness of these norms is proved by using strong Morawetz-type estimates inside cones. Unfortunately these estimates are not available for $H^{k}-$ solutions of (\ref{Eqn:KgBar}), because (\ref{Eqn:KgBar}) contains the mass term $u$. In \cite{nakimrn}, a finite bound of a
Strichartz-type norm of solutions of energy-critical Klein-Gordon equations on a time interval of size roughly equal to one was found. The proof of this bound relies upon
 methods of concentration (in the spirit of \cite{bourg}), weighted Morawetz-type estimates, and decay estimates inside cones. It should be possible to prove a similar estimate for $H^{k}-$solutions of (\ref{Eqn:KgBar}). More precisely we prove in Section \ref{Sec:ProofProp} the following proposition \footnote{The definition of $X_{k}(K,u)$ is given in Section \ref{Sec:Prelim}.}

\begin{prop}
Let $u$ be an $H^{k}-$ solution of (\ref{Eqn:KgBar}) on an interval $K := [0,a] \subset [0, \epsilon_{0})$. There exists a constant
$C_{1} \gg 1 $ such that if $ X_{k}(K,u) \leq M $ for some $M \gg 1$ then

\begin{equation}
\begin{array}{ll}
\| u \|_{L_{t}^{\frac{2(n+1)}{n-2}} L_{x}^{\frac{2(n+1)}{n-2}}(K)} & \leq  C_{1}^{C_{1} g^{b_{n}+}(M)}
\end{array}
\label{Eqn:BoundStrichLong}
\end{equation}
with $b_{n}$ such that

\begin{equation}
b_{n} = \left\{
\begin{array}{l}
6 , n = 3 \\
\frac{49}{4}, n = 4 \\
22, n =5
\end{array}
\right.
\end{equation}
\label{Prop:BoundLong}
\end{prop}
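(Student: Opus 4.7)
Following \cite{nakimrn,triroyjensen,triroysmooth}, the natural approach is a partition-and-iterate argument. Cut $K=[0,a]$ into $J$ consecutive subintervals $K_{j}=[t_{j},t_{j+1}]$ on each of which the energy-critical Strichartz norm $\|u\|_{L_{t,x}^{\frac{2(n+1)}{n-2}}(K_{j})}$ is comparable to a small constant $\eta=\eta(M)$. On each $K_{j}$ a Strichartz continuity argument at the $H^{1}$-regularity level will close, and $J$ will be estimated by a power of $g(M)$; multiplying the local contributions then produces the doubly-exponential bound $C_{1}^{C_{1}g^{b_{n}+}(M)}$.

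\textbf{Local Strichartz estimate on $K_{j}$.} Applying (\ref{Eqn:StrichEst}) and (\ref{Eqn:StrichEst2}) with $m=1$ gives
\begin{equation*}
\|u\|_{L_{t,x}^{\frac{2(n+1)}{n-2}}(K_{j})} \lesssim \|(u(t_{j}),\p_{t}u(t_{j}))\|_{H^{1}\times L^{2}} + \bigl\| |u|^{\frac{4}{n-2}} u\, g(|u|)\bigr\|_{L_{t}^{\tilde q}L_{x}^{\tilde r}(K_{j})}\cdot
\end{equation*}
The linear piece is controlled by $\sqrt{E(u)}\lesssim_{M}1$ via energy conservation (since $F\geq 0$ the kinetic, gradient and mass terms are each dominated by $E$). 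For the nonlinearity I would H\"older-distribute the $\frac{4}{n-2}$ copies of $u$ into the small critical norm on $K_{j}$ and the remaining copy into an auxiliary admissible Strichartz norm controlled by $X_{1}(K_{j},u)\leq X_{k}(K_{j},u)\leq M$; the logarithmic factor $g(|u|)$ is pulled out pointwise by $g(|u|)\leq g(\|u\|_{L^{\infty}_{t,x}(K_{j})})\lesssim g^{+}(M)$ using Sobolev embedding at the $H^{k}$ regularity provided by $X_{k}$. Smallness of $\eta$ then absorbs the relevant unknown factor on the right, yielding a clean local bound of the shape $\|u\|_{L_{t,x}^{\frac{2(n+1)}{n-2}}(K_{j})}\leq C_{1} g^{+}(M)$.

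\textbf{Counting subintervals and main obstacle.} To bound $J$, I would use $X_{k}(K,u)\leq M$ together with interpolation between the $L_{t}^{\frac{2(n+1)}{n-1}}H^{k-1/2,\frac{2(n+1)}{n-1}}$ component of $X_{k}$ (and, for $n=5$, the extra $L_{t}^{2}H^{k-1,\frac{2n}{n-3}}$ component built into $F$) and the appropriate Sobolev embeddings, to control a suitable stronger $L_{t,x}^{p}$-norm of $u$ over all of $K$ by $\lesssim g^{c_{n}+}(M)$. Since each $K_{j}$ contributes at least $\eta^{\alpha}$ to this global norm, inverting gives $J\lesssim g^{b_{n}+}(M)$ with $b_{n}$ as stated, and iterating the local bound across the $J$ subintervals produces the advertised $C_{1}^{J}\leq C_{1}^{C_{1}g^{b_{n}+}(M)}$. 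The main obstacle is pinning down the precise value of $b_{n}$: each invocation of $g(|u|)\lesssim g^{+}(M)$ or of a Sobolev embedding trading $H^{k-1/2,q}$ regularity for an $L^\infty_x$ bound costs a tracked power of $g^{+}(M)$, and one must simultaneously optimize the H\"older decomposition of $|u|^{\frac{4}{n-2}} u\, g(|u|)$ against the dual Strichartz pair in (\ref{Eqn:StrichEst}). The dimension-dependent arithmetic, and in particular the need to exploit both components of $F$ when $n=5$ (where $k$ is constrained below $\frac{7}{3}$, so the available embeddings are more lossy), accounts for the three values $b_{n}=6,\frac{49}{4},22$.
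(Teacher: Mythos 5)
There is a genuine gap in the subinterval count, which is the heart of this proposition. You claim to bound the number $J$ of subintervals polynomially in $g(M)$ by first controlling ``a suitable stronger $L^{p}_{t,x}$-norm over all of $K$ by $\lesssim g^{c_{n}+}(M)$'' via interpolation and Sobolev embeddings starting from $X_{k}(K,u)\leq M$. No such control is available: from $X_{k}(K,u)\leq M$ alone every spacetime norm of $u$ produced by Strichartz, Sobolev embedding and interpolation comes out of size $O(M)$, not $O(g^{c_{n}+}(M))$, while norms bounded independently of $M$ can only come from the conserved energy, which controls $\| u \|_{L^{\infty}_{t} H^{1}}$ and $\| \partial_{t} u \|_{L^{\infty}_{t} L^{2}}$ but by itself does not yield any global-in-time $L^{p}_{t,x}$ bound on $K$ (that would be precisely the energy-critical spacetime estimate that the whole argument is fighting to establish). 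Consequently the step ``each $K_{j}$ contributes at least $\eta^{\alpha}$ to this global norm, inverting gives $J\lesssim g^{b_{n}+}(M)$'' does not close, and the subsequent ``iterate the local bound to get $C_{1}^{J}$'' is also not the right mechanism here: the energy is conserved, so there is no cascading data-norm growth across the $K_{j}$ in this proposition.

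What the paper actually proves is that the number $l$ of subintervals is itself doubly exponential, $l\lesssim C^{Cg^{b_{n}+}(M)}$, after which (\ref{Eqn:BoundStrichLong}) follows directly from $\| u \|_{L^{\frac{2(n+1)}{n-2}}_{t,x}(K)}^{\frac{2(n+1)}{n-2}}=\sum_{j}\| u \|_{L^{\frac{2(n+1)}{n-2}}_{t,x}(K_{j})}^{\frac{2(n+1)}{n-2}}\approx l\,\eta^{\frac{2(n+1)}{n-2}}$. The doubly-exponential form of $l$ comes from a combinatorial ingredient absent from your outline, the dyadic extraction Result \ref{Res:DyadicExt} (modeled on Lemma 4.2 of \cite{nakimrn}): if $l\geq\bar{C}^{\bar{C}(N+1)}$ one extracts $N$ dyadically nested points, and it is $N$ (not $l$) that is bounded by $g^{b_{n}+}(M)$. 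Bounding $N$ then requires the concentration Lemma \ref{lem:Concentration}, which localizes the $L^{1_{2}^{*}}$-mass of $u(t_{j})$ to a ball of radius $R_{j}\lesssim g^{\cdot+}(M)\,|J_{j}|$ on each $K_{j}$, together with the decay Lemma \ref{lem:EstDecay} (the Morawetz estimate (\ref{Eqn:MorawetzEst}) and the light-cone decay estimate (\ref{Eqn:MorawEstCone})); one partitions the extracted points into the three sets $P,Q,R$ of (\ref{Eqn:ChoicePQR}) and counts each by Morawetz, cone decay, and finite speed of propagation with energy conservation, respectively. Without this machinery the argument does not reach (\ref{Eqn:BoundStrichLong}), and the values of $b_{n}$ are traced to the exponents in the concentration radius and lower mass bound, not to a H\"older bookkeeping of the nonlinearity as you suggest.
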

The proposition above shows that we have a finite bound of a Strichartz-type norm (namely $  \| u \|_{L_{t}^{\frac{2(n+1)}{n-2}} L_{x}^{\frac{2(n+1)}{n-2}}(K)} $ ) of an $H^{k}-$ solution $u$ on the interval $K$ assuming that an \textit{a priori} bound of some norms at $H^{k}-$ regularity 
holds on this interval. Observe that this estimate depends slowly on the \textit{a priori} bound: this observation is crucial to control \textit{a posteriori} these norms on intervals of size roughly equal to one (see Section \ref{Sec:Thmmain}). The proof of Proposition \ref{Prop:BoundLong} relies also upon some (local-in-time) nonlinear estimates: in order for these estimates to depend slowly on the \textit{a priori} bound, we prove in Section \ref{Sec:JensenIneq} some Jensen-type inequalities (in the spirit of \cite{triroyjensen}) and then fractional Leibnitz-type estimates that have this slow dependence property.  These fractional Leibnitz-type rules are also used  in Section \ref{Sec:CritBlowUp} to prove Proposition \ref{Prop:GlobWellPosedCrit} and consequently the criterion of global existence of $H^{k}-$ solutions of (\ref{Eqn:KgBar}) (see Remark \ref{Rem:GlobalCrit}). In Section \ref{Sec:Thmmain} we prove the main result of this paper, i.e Theorem \ref{thm:main}. The proof combines the estimate (\ref{Eqn:BoundStrichLong}) on an interval of size roughly equal to one with an iteration argument on small subintervals to find an \textit{a posteriori} bound of the Strichartz-type norm and the norms at $H^{k}-$ regularity on this interval. This proves global existence by Remark \ref{Rem:GlobalCrit}.

\section{Preliminaries}
\label{Sec:Prelim}

\subsection{General notation}

We recall some general notation.\\
\\
If $a \in \mathbb{R}$ then $\langle a \rangle := \left( 1+ a^{2} \right)^{\frac{1}{2}}$.  We write
$a \lesssim b$ (resp. $a \ll b$ ) if there exists a positive constant (resp. positive and small constant compare with $1$) $C$ (resp. $c$) \footnote{In particular $C$ and $c$
do not depend on $a$ and $b$} such that $ a \leq C b$ (resp. $ a \leq c b $). We write $a \gtrsim b$ (resp. $a \gg b$) if $b \lesssim a$  (resp. $b \ll a$). We write
$a \approx b $ if $a \lesssim b$ and $b \lesssim a$. It may be that the constants $C$ or $c$ depend on some parameters
$\alpha_{1}$, ..., $\alpha_{m}$: unless otherwise specified, we do not mention them, for sake of simplicity. We define $b+$ to be a number $ b + \epsilon $ for
some $ 0 < \epsilon \ll 1 $ \footnote{In view of what is written above, if $(a,b,d) \in \mathbb{R}^{3}$, then $a \lesssim b^{d+}$ means that there exists a constant
$C > 0$ that may depend on $\epsilon$ and such that $ a \leq C b^{d + \epsilon} $.}. \\
Unless otherwise specified, we let in the sequel $f$ (resp. $u$) be a function depending on space (resp. space
and time). Unless otherwise specified, for sake of simplicity, we do not mention the spaces to which $f$ and $u$
belong in the estimates: this exercise is left to the reader.\\

\subsection{Other notation}

Let $r > 1$ and let $ 0 < m < \frac{n}{r}$. We denote by $m_{r}^{*}$ the number that satisfies

\begin{equation}
\begin{array}{l}
\frac{1}{m_{r}^{*}} = \frac{1}{r} - \frac{m}{n}
\end{array}
\nonumber
\end{equation}

Let $j \in \mathbb{R}$. We define

\begin{equation}
X_{j}(J,u) := \| u \|_{ L_{t}^{\frac{2(n+1)}{n-1}} H^{j - \frac{1}{2},\frac{2(n+1)}{n-1}} (J)} + \| u \|_{L_{t}^{\infty} H^{j} (J)}
+ Y_{j}(J,u), \; \text{with}
\nonumber
\end{equation}
$Y_{j}(J,u) := 0 $ if $n \in \{ 3,4 \}$ and $Y(J,u) := \| u \|_{L_{t}^{2} H^{j- 1, \frac{2n}{n-3}}(J) }$ if $n=5$. \\
\\

Let $u$ be an $H^{k}-$ solution of (\ref{Eqn:KgBar}), with $k$ defined in Proposition \ref{Prop:LocalWell}. We define for $t \in I_{max}$

\begin{equation}
\begin{array}{ll}
E(u(t))  & :=  \frac{1}{2} \int_{\mathbb{R}^{n}} |\partial_{t} u(t,x)|^{2} \; dx  +  \frac{1}{2} \int_{\mathbb{R}^{n}} |\nabla
u(t,x)|^{2} \; dx
+ \frac{1}{2} \int_{\mathbb{R}^{n}} |u(t,x)|^{2} \; dx
+  \int_{\mathbb{R}^{n}} F \left( u(t,x),\overline{u(t,x)} \right) \; dx,
\end{array}
\label{Eqn:EnergyBarely}
\end{equation}
with

\begin{equation}
\begin{array}{ll}
F(z,\bar{z}) & := \int_{0}^{|z|} s^{1_2^{*} - 1} g(s) \, ds \cdot
\end{array}
\label{Eqn:ExpF}
\end{equation}
Observe that $E(u(t))$ is finite. Indeed, integrating by parts once $F(z,\bar{z})$ we get

\begin{equation}
F(z,\bar{z}) = \frac{|z|^{1_{2}^{*}} g(|z|) }{1_{2}^{*}} - \frac{1}{1_{2}^{*}} \int_{0}^{|z|} s^{1_{2}^{*}} g^{'} (s) \; ds \cdot
\label{Eqn:IntPartsFz}
\end{equation}
Hence elementary estimates show that if $\gamma$ is small enough \footnote{in particular if $ 0 < \gamma < \gamma_{n}$, with
$\gamma_{n}$ defined in the statement of Theorem \ref{thm:main}}

\begin{equation}
\begin{array}{ll}
F(z,\bar{z}) & \approx  |z|^{1_{2}^{*}} g(|z|),
\end{array}
\label{Eqn:EquivF}
\end{equation}
which implies that

\begin{equation}
\begin{array}{ll}
\left| \int_{\mathbb{R}^{n}} F(f,\bar{f})(x) \, dx \right| & \lesssim  \| f \|^{1_{2}^{*}}_{L^{1_{2}^{*}}} +
\| f \|^{k_{2}^{*}}_{L^{k_{2}^{*}}}  \\
& \lesssim \langle \| f \|_{H^{k}} \rangle^{k_{2}^{*}},
\end{array}
\end{equation}
the last estimate resulting from the Sobolev embeddings  $L^{1_{2}^{*}} \hookrightarrow H^{k}$ and
$L^{k_{2}^{*}} \hookrightarrow H^{k}$, combined with the estimate $g(|f|) \lesssim  1 + |f|^{k_{2}^{*}-1_{2}^{*}}$. \\
\\
 A simple computation shows that $E(u(t))$ is conserved: in other words, $E(u(t))=E(u(0))$ \footnote{More precisely, the computation holds for smooth solutions (i.e solutions in $H^{p}$ with exponents $p$ large enough). Then $E(u(t))=E(u(0))$  holds for an $H^{k}-$ solution with $k \in I_{n}$ by a standard approximation with smooth solutions}. Therefore, in the sequel, we write $E$ instead of $E(u(t))$ and $E$ denotes the energy of $u$.\\
 \\
 Let $J$ be an interval. Let $(t_0,t) \in J^{2}$. If $u$ is a solution of $ \partial_{tt} u - \triangle u + u = G$ on $J$ then we have

\begin{equation}
\begin{array}{l}
u(t) = u_{l,t_0}(t)+ u_{nl,t_0}(t)
\end{array}
\nonumber
\end{equation}
with $u_{l,t_0}$ denoting the linear part starting from $t_0$, i.e

\begin{equation}
\begin{array}{l}
u_{l,t_0}(t) := \cos{ \left( (t-t_0) \langle D \rangle \right)} u(t_0) + \frac{ \sin{ \left( (t-t_0) \langle D \rangle \right) } }  { \langle D  \rangle} \p_{t} u(t_{0}),
\end{array}
\nonumber
\end{equation}
and $u_{nl,t_0}$ denoting the nonlinear part starting from $t_0$, i.e

\begin{equation}
\begin{array}{l}
u_{nl,t_0}(t) :=  - \int_{t_0}^{t} \frac{ \sin{ \left( (t-s) \langle D \rangle \right) }} {\langle D \rangle} G(s) \; ds \cdot
\end{array}
\nonumber
\end{equation}

\subsection{Jensen inequality and Strichartz-type estimates}

We recall some standard inequalities.\\
\\
Throughout this paper we will constantly use (a generalized form of) the Jensen inequality (see e.g \cite{yeh}). The statement of this inequality is made in
\cite{yeh} for convex functions. The statement of the inequality below follows immediately from that in \cite{yeh} , taking into account that if $f$ is concave then
$-f$ is convex.

\begin{prop}(Jensen inequality, see e.g \cite{yeh})
Let $(X, \mathcal{B}, \mu)$ be a measure space such that $0 < \mu(X) < \infty$. Let $I$ be an open interval and let
$g$ be a $\mu-$ integrable function on a set $D \in \mathcal{B}$ such that $g(D) \subset I$. If $f$ is a concave function on
$I$ then the following holds

\begin{equation}
\begin{array}{ll}
\frac{1}{\mu(D)} \int_{D} f \circ  g \; d \mu &  \leq f \left( \frac{ \int_{D} g \; d \mu }{\mu(D)} \right)
\end{array}
\nonumber
\end{equation}

\end{prop}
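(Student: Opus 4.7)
The plan is to derive the Jensen inequality from the existence of a \emph{supporting affine function} at every interior point of the interval on which $f$ is concave. The classical one-variable lemma states that for a concave $f$ on an open interval $I$ and any $t_{0} \in I$, there exist $A, B \in \mathbb{R}$ (with $B$ any number in the interval $[f^{'}_{+}(t_{0}), f^{'}_{-}(t_{0})]$ of one-sided derivatives, which exist because $f$ is concave) such that $f(t) \leq A + B t$ for every $t \in I$, with equality at $t = t_{0}$. I would invoke this lemma once with a cleverly chosen $t_{0}$ and then integrate.

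Set $\bar{g} := \frac{1}{\mu(D)} \int_{D} g \, d\mu$. I would first verify that $\bar{g} \in I$. Write $I = (a,b)$; since $g(D) \subset I$ we have $a < g(x) < b$ for every $x \in D$, and since $\mu(D) > 0$ the set $\{ x \in D : g(x) > a + 1/n \}$ has positive measure for some positive integer $n$. A standard measure-theoretic argument then forces $\bar{g} > a$; an analogous argument at the right endpoint gives $\bar{g} < b$, so $\bar{g} \in I$ and the lemma can be applied at $t_{0} := \bar{g}$.

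Now let $A, B$ be the parameters of the supporting line at $\bar{g}$, so that $f(t) \leq A + B t$ on $I$ with equality at $\bar{g}$. Composing with $g$ gives the pointwise inequality $f(g(x)) \leq A + B g(x)$ on $D$. Integrating over $D$ against $\mu$ and dividing by $\mu(D)$ yields
\begin{equation}
\frac{1}{\mu(D)} \int_{D} f \circ g \; d\mu \;\leq\; A + B \, \bar{g} \;=\; f(\bar{g}),
\nonumber
\end{equation}
which is exactly the claim.

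The technical subtleties are minor and I would handle them in passing: measurability of $f \circ g$ follows from the fact that a concave function on an open interval is continuous; the verification $\bar{g} \in I$ is the argument sketched above; and if $\int_{D} f \circ g \, d\mu = - \infty$ then the conclusion is trivially true (the integral cannot equal $+\infty$ since $f \circ g \leq A + B g$ with $g$ being $\mu-$integrable). The only real piece of machinery is the supporting-line lemma itself, which is a classical consequence of the monotonicity of chord slopes of concave functions; since the paper cites \cite{yeh} for the statement, I would treat this ingredient as standard and not dwell on it. In short, there is no serious obstacle; the crux is simply choosing $t_{0} = \bar{g}$ in the supporting-line inequality.
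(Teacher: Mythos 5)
Your proof is correct, and it is the classical supporting-line (subdifferential/superdifferential) argument for Jensen's inequality. The paper, by contrast, does not prove the proposition at all: it simply cites \cite{yeh}, where the inequality is stated for convex $f$, and remarks that the concave version follows by applying the convex case to $-f$. So your route is genuinely different in the sense of being self-contained rather than deferential. The ingredients you isolate are exactly the ones that matter: the existence of an affine majorant $t \mapsto A + Bt$ of the concave $f$ touching at $t_{0} = \bar{g}$ (a consequence of the monotonicity of chord slopes), the verification that $\bar{g} := \mu(D)^{-1}\int_{D} g\, d\mu$ lies in $I$ so the lemma may be applied there, and the pointwise inequality $f\circ g \leq A + Bg$ which integrates to the claim. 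Your remarks on measurability (concave functions on an open interval are continuous) and on the case $\int_{D} f\circ g\, d\mu = -\infty$ (trivial, since $f\circ g$ is bounded above by the integrable function $A+Bg$) are the right sanity checks. The only thing you might tighten is the $\bar{g}\in I$ step: it is cleanest to separate the case where an endpoint of $I$ is infinite (then the corresponding strict inequality is automatic from integrability of $g$) from the case where it is finite (then your argument with the sets $\{g > a + 1/n\}$ exhausting $D$ applies). In short, your proof supplies exactly what the paper delegates to \cite{yeh}, and buys the reader a self-contained derivation at essentially no extra cost.
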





We will combine the Jensen inequality with well-known Strichartz-type estimates. We recall now these estimates. Let $J$ be an interval. Let $t_0 \in J$. If $u$ is a solution of $ \partial_{tt} u - \triangle u + u = G$ on $J$ then the following estimates hold

\begin{equation}
\begin{array}{l}
\| u \|_{L_{t}^{\infty} H^{m} (J) } + \| u \|_{L_{t}^{q} L_{x}^{r} (J)}
\lesssim \left\| ( u(t_{0}), \p_{t} u(t_{0}) ) \right\|_{H^{m} \times H^{m-1}} +
\| G \|_{L_{t}^{\tilde{q}} L_{x}^{\tilde{r}} (J)} \cdot
\end{array}
\label{Eqn:StrichEst}
\end{equation}
Here $ m \in [0,1]$ and $(q,r, \tilde{q},\tilde{r})$ satisfying the following admissibility properties:
$(q,r) \in \mathcal{W} := \left\{ (x,y): x \geq 2, \; \frac{1}{x} + \frac{n-1}{2 y} \leq \frac{n-1}{4}, \; \left( x,y, \frac{n-1}{2} \right) \neq (2,\infty,1) \right\}$, \\
$(\tilde{q},\tilde{r}) \in \mathcal{W}^{'} := \left\{ (x',y'): \; \exists (x,y) \in \mathcal{W}: \;  \text{s.t}
\left( \frac{1}{x} + \frac{1}{x'}, \frac{1}{y} + \frac{1}{y'} \right) = (1,1)  \right\} $,  and
$\frac{1}{q} + \frac{n}{r} = \frac{n}{2} - m = \frac{1}{\tilde{q}} + \frac{n}{\tilde{r}} - 2 $. \\
We also have

\begin{equation}
\begin{array}{ll}
\| u \|_{L_{t}^{\infty} H^{m}(J)}  +  \| u \|_{L_{t}^{q} L_{x}^{r} (J)} & \lesssim
\left\|  \left( u(t_{0}), \partial_{t} u(t_{0}) \right) \right\|_{H^{m} \times H^{m-1}} +
\left\| \langle D  \rangle^{m-1} G  \right\|_{L_{t}^{1} L_{x}^{2}(J)}
\end{array}
\label{Eqn:StrichEst2}
\end{equation}

\subsection{Paley-Littlewood projectors}

Throughout this paper we use the Paley-Littlewood technology. Let $\phi$ be a bump function, i.e a function $\phi$ that satisfies the following
properties: it is smooth, $\phi(\xi) =  1$ if $|\xi| \leq \frac{1}{2}$ and $\phi(\xi) = 0 $ if $|\xi| \geq 1$. Let $\psi(\xi) := \phi(\xi)
- \phi \left( \frac{\xi}{2} \right)$. If $N \in 2^{\mathbb{N}}$ then the Paley-Littlewood projectors $P_{N}$, $P_{<N}$, and $P_{\geq N}$ are defined in the Fourier
domain by

\begin{equation}
\begin{array}{l}
\widehat{P_{<N} f}(\xi) := \sum \limits_{M \in 2^{\mathbb{Z}}: M < N} \widehat{P_{M} f}(\xi), \\
\widehat{P_{N} f} (\xi) := \psi \left( \frac{\xi}{N} \right) \hat{f}(\xi), \; \text{and} \\
\widehat{P_{\geq N} f}(\xi) := \hat{f}(\xi) - \widehat{P_{<N} f}(\xi) \cdot
\end{array}
\nonumber
\end{equation}
The Paley-Littlewood projector $P_{0}$ is defined in the Fourier domain by

\begin{equation}
\begin{array}{l}
\widehat{P_{0} f}(\xi) := \phi(\xi) \hat{f}(\xi) \cdot
\end{array}
\nonumber
\end{equation}

\section{Jensen-type inequalities and Leibnitz rules}
\label{Sec:JensenIneq}

In this section we prove Jensen-type inequalities. We then derive Leibnitz-type rules from the Jensen-type inequalities. If
$ f: \mathbb{R}^{+} \rightarrow \mathbb{R}$ is a function then we denote by $\check{f}$ the function such that
$\check{f} (x) = f(x^{2})$.

\subsection{Jensen-type inequalities}

In this subsection we prove the following Jensen-type inequalities:

\begin{prop}

Let $I$ be an interval. Let $\beta > 0$. Let $\check{k} > 1$. Let $F : \mathbb{R}^{+} \rightarrow \mathbb{R}^{+}$ be a function that has the following properties:

\begin{equation}
\begin{array}{l}
(a): \; \forall \mu > 0, \; \exists B > 0 \; s.t \; F^{\mu} \; \text{is concave on} \; (B, \infty) \;  \\
(b): \; \forall \mu > 0, \; \forall \epsilon > 0, \; \exists B > 0 \; \text{s.t} \left[ x > B \Longrightarrow F^{\mu}( x^{\epsilon} ) \geq \frac{1}{10} F ^{\mu}(x)  \right], \; \text{and} \\
(c): \; \forall \mu > 0, \; \forall \nu > 0 \; F^{\mu}(x^{\nu}) \lesssim F^{\mu}(x) \cdot
\end{array}
\nonumber
\end{equation}
Assume that there exist $(P,Q) \in \mathbb{R}^{+} \times \mathbb{R}^{+}$ such that $\| u \|_{L_{t}^{\frac{2(n+1)}{n-2}} L_{x}^{\frac{2(n+1)}{n-2}}(I)} \leq P$ and $ X_{\check{k}}(I,u) \leq Q $. Then

\begin{equation}
\begin{array}{ll}
\| F^{\beta}(|u|^{2}) u \|_{L_{t}^{\frac{2(n+1)}{n-2}} L_{x}^{\frac{2(n+1)}{n-2}}(I)} \lesssim P \check{F}^{\beta}(Q)
\end{array}
\label{Eqn:ConcavEst2}
\end{equation}

\label{Prop:JensenTypeIneq}
\end{prop}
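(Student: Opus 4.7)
The plan is to raise the desired inequality to the $q$-th power (with $q := \tfrac{2(n+1)}{n-2}$) and invoke the generalized Jensen inequality cited above, applied to the concave function $F^{q\beta}$ against the concentration measure $d\mu := |u|^{q} \, dx \, dt$ on $D := I \times \mathbb{R}^{n}$. Note that $\mu(D) = \|u\|_{L_{t}^{q} L_{x}^{q}(I)}^{q} \leq P^{q}$, which is the source of the factor $P$ in the final bound.

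First I would split $D = D_{1} \cup D_{2}$, where
\begin{equation}
D_{1} := \{(t,x) : |u(t,x)|^{2} \leq B\}, \qquad D_{2} := \{(t,x) : |u(t,x)|^{2} > B\},
\nonumber
\end{equation}
and $B = B(q\beta)$ is chosen via property $(a)$ so that $F^{q\beta}$ is concave on $(B,\infty)$. On $D_{1}$, the bound $F^{q\beta}(|u|^{2}) \leq \max_{s \in [0,B]} F^{q\beta}(s) \lesssim 1$ gives the trivial contribution $\int_{D_{1}} F^{q\beta}(|u|^{2}) |u|^{q} \lesssim P^{q}$. On $D_{2}$, setting $Z := \int_{D_{2}} |u|^{q} \, dx \, dt$ and applying the Jensen inequality with the probability measure $Z^{-1} |u|^{q} \, dx \, dt$ and the (now bona fide concave) function $F^{q\beta}$ yields
\begin{equation}
\int_{D_{2}} F^{q\beta}(|u|^{2}) |u|^{q} \, dx \, dt \leq Z \cdot F^{q\beta}\!\left( \frac{1}{Z} \int_{D_{2}} |u|^{q+2} \, dx \, dt \right).
\nonumber
\end{equation}

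The key analytic step is to estimate the argument of $F^{q\beta}$. I would bound $\int_{D_{2}} |u|^{q+2} \, dx\, dt$ via H\"older in $x$ combined with the Sobolev embeddings extracted from $X_{\check{k}}(I,u) \leq Q$: the $L_{t}^{\infty} H^{\check{k}}_{x}$ piece embeds into $L_{t}^{\infty} L_{x}^{\check{k}^{*}_{2}}$, while the Strichartz piece $L_{t}^{\frac{2(n+1)}{n-1}} H^{\check{k}-\frac{1}{2},\frac{2(n+1)}{n-1}}_{x}$ embeds into a suitable $L_{t}^{\frac{2(n+1)}{n-1}} L_{x}^{r_{0}}$. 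Interpolating these with the $L_{t,x}^{q}$ bound by $P$ produces a control of the form
\begin{equation}
\int_{D_{2}} |u|^{q+2} \, dx \, dt \lesssim P^{a} \, Q^{b}
\nonumber
\end{equation}
for exponents $a,b>0$ depending on $n$ and $\check{k}$. Feeding this back and using $Z \leq P^{q}$, the $D_{2}$-contribution reduces to $Z \cdot F^{q\beta}(P^{a} Q^{b}/Z)$, to be compared with $P^{q} F^{q\beta}(Q^{2})$.

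The main obstacle is twofold. First, the slow-growth properties $(b)$ and $(c)$ of $F$ must be used to show $Z \cdot F^{q\beta}(P^{a} Q^{b}/Z) \lesssim P^{q} F^{q\beta}(Q^{2})$ uniformly for $Z \in (0, P^{q}]$: property $(c)$ replaces $F^{q\beta}(Q^{b})$ by a constant multiple of $F^{q\beta}(Q^{2})$, and property $(b)$ controls the small-$Z$ regime by exploiting that $F^{q\beta}$ grows so slowly that $Z \cdot F^{q\beta}(1/Z)$ stays bounded. Second, choosing admissible H\"older/Sobolev exponents in the interpolation above is delicate, since for $\check{k}$ close to $1$ the embedding $H^{\check{k}} \hookrightarrow L^{q+2}_{x}$ fails, and one must combine the Strichartz piece of $X_{\check{k}}$ with H\"older in time; verifying this works uniformly across $n \in \{3,4,5\}$ and the full admissible range of $\check{k}$ is the technical heart of the argument. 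Combining the $D_{1}$ and $D_{2}$ bounds and taking $q$-th roots then delivers (\ref{Eqn:ConcavEst2}).
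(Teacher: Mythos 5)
Your plan captures the outer structure of the argument -- split $D$ into $\{|u|^{2}\le B\}$ and $\{|u|^{2}>B\}$ using property $(a)$, dispose of the first region trivially, and apply Jensen on the second against the measure $|u|^{q}\,dx\,dt$ with $q:=\tfrac{2(n+1)}{n-2}$. The paper performs two stacked Jensen applications (first in $x$, then in $t$) rather than a single spacetime one, but that is a harmless variant. The gap is exactly at the step you flag as the ``technical heart,'' and it is not merely delicate: it is false as stated.

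After Jensen on $D_{2}$ you must control $\frac{1}{Z}\int_{D_{2}}|u|^{q+2}$, hence $\|u\|_{L^{q+2}_{t,x}(I)}$, by $P$ and $Q$. For $\check k$ close to $1$ this is impossible, even combining the Strichartz piece of $X_{\check k}$ with H\"older in time and the $L^{q}_{t,x}\le P$ bound. Concretely for $n=3$ (so $q=8$, $q+2=10$): at spatial regularity zero the three available norms sit, in the $\left(\tfrac1{(\text{time exp.})},\tfrac1{(\text{space exp.})}\right)$-plane, at
\begin{equation}
\left(\tfrac14,\ \tfrac14-\tfrac{\check k-1/2}{3}\right),\qquad
\left(0,\ \tfrac12-\tfrac{\check k}{3}\right),\qquad
\left(\tfrac18,\ \tfrac18\right),
\nonumber
\end{equation}
the first two bounded by $Q$, the third by $P$. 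All three lie in the strip $\tfrac12-(\check k-1)\le x+3y\le\tfrac12$, whereas the target $\left(\tfrac1{10},\tfrac1{10}\right)$ has $x+3y=\tfrac{2}{5}$. So the target escapes the convex hull whenever $\check k-1<\tfrac1{10}$, and no interpolation reaches $L^{10}_{t,x}$. (A bump-function scaling computation confirms that $\|u\|_{L^{10}_{t,x}}$ can be made arbitrarily large while $P,Q$ stay bounded for such $\check k$.) Since the proposition asserts the bound for every $\check k>1$, the step ``$\int_{D_2}|u|^{q+2}\lesssim P^{a}Q^{b}$'' cannot be completed, and in fact the integrand may fail to be $\mu$-integrable, so Jensen in your form is not even applicable.

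The idea you are missing is that property $(b)$ must be used \emph{before} Jensen to lower the power of $|u|$ inside $F$. For $A$ large and any fixed $0<\epsilon\ll\check k-1$, property $(b)$ gives $F^{q\beta}(|u|^{2})\lesssim F^{q\beta}(|u|^{2\epsilon})$ on $\{|u|>A\}$. Running Jensen with $g=|u|^{2\epsilon}$ (instead of $g=|u|^{2}$) turns the troublesome argument into $\frac1Z\int|u|^{q+2\epsilon}$, and the exponent $q+2\epsilon$, for $\epsilon$ small, lies inside the interpolation range of the two pieces of $X_{\check k}(I,u)$: this is how the paper obtains $\|u\|_{L^{q+2\epsilon}_{t,x}(I)}\lesssim Q$, after which property $(c)$ collapses $F^{q\beta}$ of any fixed power of $Q$ to $\check F^{q\beta}(Q)$. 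Property $(b)$ is not, as you suggest, a cleanup tool for the small-$Z$ regime; it is the mechanism that makes the Jensen output controllable at all. Once you insert this $(b)$-reduction at the outset, the remainder of your outline goes through.
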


\begin{proof}

We use an argument in \cite{triroyjensen}. Let $ \check{k}-1 \gg \epsilon > 0 $ be a fixed and small enough constant for all the estimates below to be true. There exists
$ A \approx 1 $  such that $ \left[ F^{ \frac{2 (n+1) \beta}{n-2}} \, \text{is concave on} \, (A, \infty) \right]$ and
$ \left[ x \in \mathbb{C}, |x| > A \Longrightarrow   F^{\frac{2 (n+1) \beta}{n-2}}(|x|^{2 \epsilon}) \geq  \frac{1}{10} F^{ \frac{2 (n+1) \beta}{n-2}} (|x|^{2}) \right] $. \\
\\
We see from the triangle inequality that it suffices to estimate

\begin{equation}
\begin{array}{ll}
W_{1} & := \int_{I} \int_{|u(t,x)| \leq A} F^{\frac{2(n+1) \beta}{n-2}} (|u(t,x)|^{2}) |u(t,x)|^{\frac{2(n+1)}{n-2}} \; dx \; dt, \; \text{and} \\
W_{2} & := \int_{I} \int_{|u(t,x)| > A} F^{\frac{2(n+1) \beta}{n-1}} (|u(t,x)|^{2}) |u(t,x)|^{\frac{2(n+1)}{n-2}} \; dx \; dt \cdot
\end{array}
\nonumber
\end{equation}
Elementary considerations show that
$ W_{1} \lesssim \| u \|^{\frac{2(n+1)}{n-2}}_{L_{t}^{\frac{2(n+1)}{n-2}} L_{x}^{\frac{2(n+1)}{n-2}}(I)} \lesssim P^{\frac{2(n+1)}{n-2}} $. In order to estimate $W_{2}$ we use the Jensen inequality twice. More precisely,

\begin{equation}
\begin{array}{ll}
W_{2} & \lesssim \int_{I} \int_{\mathbb{R}^{n}}  F^{\frac{2(n+1) \beta}{n-1}} (|u(t,x)|^{2 \epsilon} ) |u(t,x)|^{\frac{2(n+1)}{n-2}} \; dx \; dt  \\
& \lesssim X_{1} + X_{2},
\end{array}
\nonumber
\end{equation}
with

\begin{equation}
\begin{array}{ll}
X_{1} & := \int_{I} \int_{|u(t,x)|^{2 \epsilon} \leq A}  F^{\frac{2(n+1) \beta}{n-1}} (|u(t,x)|^{2 \epsilon} ) |u(t,x)|^{\frac{2(n+1)}{n-2}} \; dx \; dt, \; \text{and} \\
X_{2} & := \int_{I} \int_{|u(t,x)|^{2 \epsilon} > A}  F^{\frac{2(n+1) \beta}{n-1}} (|u(t,x)|^{2 \epsilon} ) |u(t,x)|^{\frac{2(n+1)}{n-2}} \; dx \; dt \cdot
\end{array}
\nonumber
\end{equation}
Clearly $ X_{1} \lesssim  \| u \|^{\frac{2(n+1)}{n-2}}_{L_{t}^{\frac{2(n+1)}{n-2}} L_{x}^{\frac{2(n+1)}{n-2}}(I)} \lesssim P^{\frac{2(n+1)}{n-2}} $. We have
$X_{2} \lesssim X_{2}^{'}$ with

\begin{equation}
\begin{array}{ll}
X_{2}^{'} & : = \int_{I} \int_{\mathbb{R}^{n}}  F^{\frac{2(n+1) \beta}{n-2}} \left( |u(t,x)|^{2 \epsilon} \mathbbm{1}_{|u(t,x)|^{ 2 \epsilon} > A } \right) |u(t,x)|^{\frac{2(n+1)}{n-2}} \; dx \; dt \cdot
\end{array}
\label{Eqn:X2Bd}
\end{equation}
Then write $I = I_{1} \cup I_{2}$ with $ I_{1} := \left\{ t \in I: \; \| u(t,x) \|_{L_{x}^{\frac{2(n+1)}{n+2}}} = 0 \right\} $ and
$ I_{2} := \left\{ t \in I: \; \| u(t,x) \|_{L_{x}^{\frac{2(n+1)}{n+2}}} \neq 0 \right\} $. If $ t \in I_{1}$ then $u(t) = 0$ and the portion of $X^{'}_{2}$
restricted to $I_{1}$ is equal to zero. In order to estimate the portion of $X^{'}_{2}$ restricted to $I_{2}$, we apply the Jensen inequality with respect to the measure
$ d \mu := |u(t,x)|^{\frac{2(n+1)}{n-2}} dx $. We get $ X_{2} \lesssim Y $
with $ Y := \int_{I_{2}}  F^{\frac{2(n+1) \beta}{n-2}}
\left(
\frac{ \| u(t,.) \|^{\frac{2(n+1)}{n-2} + 2 \epsilon}_{L_{x}^{\frac{2(n+1)}{n-2} + 2 \epsilon}} }
{ \| u(t,.) \|^{\frac{2(n+1)}{n-2}}_{L_{x}^{\frac{2(n+1)}{n-2}}}}
\right) \| u (t,.) \|^{\frac{2(n+1)}{n-2}}_{L_{x}^{\frac{2(n+1)}{n-2}}} \; dt $.

Let $ w(t) := \frac{ \| u(t,.) \|^{\frac{2(n+1)}{n-2} + 2 \epsilon}_{L_{x}^{\frac{2(n+1)}{n-2} + 2 \epsilon}} }
{ \| u(t,.) \|^{\frac{2(n+1)}{n-2}}_{L_{x}^{\frac{2(n+1)}{n-2}}}} $.
We write $ Y = Y_{1} + Y_{2} $ with

\begin{equation}
\begin{array}{ll}
Y_{1} & := \int_{I_{2}} \mathbbm{1}_{|w(t)| \leq A} F^{\frac{2(n+1) \beta}{n-2}} \left( w(t) \right) \| u(t,.) \|^{\frac{2(n+1)}{n-2}}_{L_{x}^{\frac{2(n+1)}{n-2}}} \; dt \\
Y_{2} & := \int_{I_{2}} F^{\frac{2(n+1) \beta}{n-2}} \left( \mathbbm{1}_{ |w(t)| > A} w(t) \right)  \| u(t,.) \|^{\frac{2(n+1)}{n-2}}_{L_{x}^{\frac{2(n+1)}{n-2}}} \; dt
\end{array}
\nonumber
\end{equation}
Clearly $ Y_{1} \lesssim  \| u \|^{\frac{2(n+1)}{n-2}}_{L_{t}^{\frac{2(n+1)}{n-2}} L_{x}^{\frac{2(n+1)}{n-2}}(I)} \lesssim P^{\frac{2(n+1)}{n-2}}$.
We then estimate $Y_{2}$. We may assume WLOG that $\| u \|_{L_{t}^{\frac{2(n+1)}{n-2}} L_{x}^{\frac{2(n+1)}{n-2}} (I_{2})} > 0 $. We apply the Jensen inequality with respect to the measure $ d \mu := \mathbbm{1}_{I_{2}}(t) \| u(t,.) \|^{\frac{2(n+1)}{n-2}}_{L_{x}^{\frac{2(n+1)}{n-2}}} dt $ to get

\begin{equation}
\begin{array}{ll}
Y_{2} & \lesssim \| u \|^{\frac{2(n+1)}{n-2}}_{L_{t}^{\frac{2(n+1)}{n-2}} L_{x}^{\frac{2(n+1)}{n-2}}(I)}
F^{\frac{2(n+1) \beta}{n-2}}  \left( \| u \|^{\frac{2(n+1)}{n-2} + 2 \epsilon}_{ L_{t}^{\frac{2(n+1)}{n-2} + 2 \epsilon} L_{x}^{\frac{2(n+1)}{n-2} + 2 \epsilon}(I)} \right)
\end{array}
\nonumber
\end{equation}
Hence we see that

\begin{equation}
\begin{array}{ll}
\| F^{\beta}(|u|^{2}) u \|_{ L_{t}^{\frac{2(n+1)}{n-2}} L_{x}^{\frac{2(n+1)}{n-2}} (I) } & \lesssim \| u \|_{L_{t}^{\frac{2(n+1)}{n-2}} L_{x}^{\frac{2(n+1)}{n-2}}(I)}
F^{\beta}  \left( \| u \|_{L_{t}^{\frac{2(n+1)}{n-2} + 2 \epsilon} L_{x}^{\frac{2(n+1)}{n-2} + 2 \epsilon}(I) } \right)
\end{array}
\nonumber
\end{equation}
Let $r$ be such that  $\frac{1}{\frac{2(n+1)}{n-2} + 2 \epsilon} + \frac{n}{r} = \frac{n}{2} - \frac{1}{2}$.
The embeddings $H^{\check{k} - \frac{1}{2},r}  \hookrightarrow L^{\frac{2(n+1)}{n-2} + 2 \epsilon} $ and
$H^{\check{k}} \hookrightarrow  H^{\check{k} - \frac{1}{2}, \frac{2n}{n-1}} $,  and interpolation show that
there exists $ 0 < \theta <  1 $ such that

\begin{equation}
\begin{array}{ll}
\| u \|_{L_{t}^{\frac{2(n+1)}{n-2} + 2 \epsilon} L_{x}^{\frac{2(n+1)}{n-2} + 2 \epsilon}(I) } & \lesssim
\| u \|_{L_{t}^{\frac{2(n+1)}{n-2} + 2 \epsilon} H^{\check{k} - \frac{1}{2},r} (I)} \\
& \lesssim \| u \|^{1 - \theta}_{ L_{t}^{\frac{2(n+1)}{n-1}} H^{\check{k}- \frac{1}{2}, \frac{2(n+1)}{n-1}} (I)}
\| u \|^{\theta}_{ L_{t}^{\infty} H^{\check{k} - \frac{1}{2},\frac{2n}{n-1}} (I)}  \\
& \lesssim \| u \|^{1 - \theta}_{ L_{t}^{\frac{2(n+1)}{n-1}} H^{\check{k} - \frac{1}{2},\frac{2(n+1)}{n-1}} (I)}
\| u \|^{\theta}_{L_{t}^{\infty} H^{\check{k}} (I)} \\
& \lesssim Q \cdot
\end{array}
\nonumber
\end{equation}
Hence (\ref{Eqn:ConcavEst2}) holds.

\end{proof}

\subsection{Consequences}

We prove from the Jensen-type inequalities the following fractional Leibnitz rule:

\begin{prop}

Let $k' \in \{1,2,...\}$, and $\beta$, $\alpha$, $\check{k}$ such that $\beta >  k' - 1 $, $ 0 \leq \alpha < 1 $ and $\check{k} > 1$. Let $(\bar{q},\bar{r}, q, r) $ be such that $ (\bar{q},q) \in (1, \infty]^{2} $, $(\bar{r},r) \in (1, \infty]^{2} $, and $ \left( \frac{1}{\bar{q}}, \frac{1}{\bar{r}} \right) = \left( \frac{1}{q}, \frac{1}{r} \right) + \frac{(n-2) \beta}{2(n+1)} (1, 1)$. Let $I$ be an interval. Let $ F: \mathbb{R}^{+} \rightarrow \mathbb{R}^{+} $ be a $ \mathcal{C}^{k^{'}}  \left( \mathbb{R}^{+} \right)- $ function that is nondecreasing and let $ G: \mathbb{R}^{2} \rightarrow \mathbb{R}$ be a $ \mathcal{C}^{k'}(\mathbb{R}^{n})- $ function that satisfy the following properties:

\begin{equation}
\begin{array}{l}
(a): \; \forall \mu > 0, \exists B > 0 \; \text{s.t} \; F^{\mu} \; \text{is concave on} \; ( B, \infty)  \\
(b): \; \forall \mu > 0, \forall \epsilon>0, \exists B > 0 \; \text{s.t} \left[ x > B \Longrightarrow F^{\mu}(x^{\epsilon})  \geq \frac{1}{10} F^{\mu} (x) \right] \\
(c): \; \forall \mu > 0, \; \forall \nu > 0, \; F^{\mu}(x^{\nu}) \lesssim F^{\mu}(x)
\end{array}
\label{Eqn:FProp}
\end{equation}

\begin{equation}
\begin{array}{ll}
F^{[i]}(x) & = O \left( \frac{F(x)}{x^{i}}  \right), \; \tau \in [0,1]: \; F(|\tau x + (1- \tau)y|^{2})  \lesssim F(|x|^{2}) + F(|y|^{2}), \; \text{and}
\end{array}
\label{Eqn:HypF}
\end{equation}

\begin{equation}
\begin{array}{ll}
G^{[i]}(x,\bar{x}) & = O(|x|^{ \beta + 1 - i})
\end{array}
\label{Eqn:HypG}
\end{equation}
for $ 0 \leq i \leq k' $. Here $F^{[i]}$ and $G^{[i]}$ denote the $i^{th}$ derivative of $F$ and $G$ respectively. Assume that there exists $ Q > 0$ such that $X_{\check{k}}(I,u) \leq Q$. Then

\begin{equation}
\begin{array}{ll}
\| \langle D \rangle^{k' - 1 + \alpha} ( G(u,\bar{u}) F(|u|^{2}) ) \|_{L_{t}^{\bar{q}} L_{x}^{\bar{r}}(I)} &
\lesssim \| \langle D  \rangle^{k' - 1 + \alpha} u \|_{L_{t}^{q} L_{x}^{r}(I)}
\| u \|^{\beta}_{L_{t}^{\frac{2(n+1)}{n-2}} L_{x}^{\frac{2(n+1)}{n-2}} (I)} \check{F}(Q) \cdot
\end{array}
\label{Eqn:FracLeibnk}
\end{equation}
More generally, let $\tilde{F} := \mathbb{R}^{+} \rightarrow \mathbb{R}^{+} $ be a nondecreasing function. Remove the assumption ``$F$ is nondecreasing''. Replace $F$ with $\tilde{F}$ on the right-hand side of the equality of (\ref{Eqn:HypF}), in the inequality of (\ref{Eqn:HypF}), and on the right-hand side of (\ref{Eqn:FracLeibnk}). With these
 substitutions made, if $F$, $\tilde{F}$, and $G$ satisfy (\ref{Eqn:HypF}) and (\ref{Eqn:HypG}), then $F$ and $G$ satisfy (\ref{Eqn:FracLeibnk}).

\label{Prop:LeibnJensen}
\end{prop}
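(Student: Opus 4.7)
The plan is to reduce Proposition \ref{Prop:LeibnJensen} to Proposition \ref{Prop:JensenTypeIneq} by combining a fractional Leibniz rule with a pointwise chain-rule estimate. Setting $H(u,\bar{u}) := G(u,\bar{u}) F(|u|^{2})$, differentiating, and invoking (\ref{Eqn:HypG}) together with the bound $F^{[j]}(x) = O(F(x)/x^{j})$ from (\ref{Eqn:HypF}), one obtains the pointwise estimate $|H^{[i]}(u,\bar{u})| \lesssim |u|^{\beta + 1 - i}\, F(|u|^{2})$ for $1 \le i \le k'$: every derivative landing on $F(|u|^{2})$ produces a factor $F^{[j]}(|u|^{2})|u|^{j} = O(F(|u|^{2}))$ that recombines cleanly with the powers of $u$ coming from differentiating $|u|^{2}$ and from $G^{[i-j]}$.

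I would first treat the base case $k' = 1$, $\alpha = 0$. The pointwise bound $|G(u,\bar{u})| \lesssim |u|^{\beta+1}$ gives $\|G(u,\bar{u}) F(|u|^{2})\|_{L_t^{\bar{q}} L_x^{\bar{r}}(I)} \lesssim \||u|^{\beta + 1} F(|u|^{2})\|_{L_t^{\bar{q}} L_x^{\bar{r}}(I)}$. Rewriting the integrand as $u \cdot |u|^{\beta} F(|u|^{2})$ and applying H\"older with the given relation on $(\bar{q},\bar{r})$, I put the first factor in $L_t^{q}L_x^{r}(I)$ and the second in $L_t^{p}L_x^{p}(I)$, where $p := 2(n+1)/(\beta(n-2))$. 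Since $\beta p = 2(n+1)/(n-2)$, this second factor equals $\|u\,F^{1/\beta}(|u|^{2})\|^{\beta}_{L_t^{2(n+1)/(n-2)} L_x^{2(n+1)/(n-2)}(I)}$, and applying Proposition \ref{Prop:JensenTypeIneq} with exponent $1/\beta$ in place of $\beta$ yields the claimed factor $\check{F}(Q)\,\|u\|^{\beta}_{L_t^{2(n+1)/(n-2)} L_x^{2(n+1)/(n-2)}(I)}$.

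For the general case $k' \ge 1$, $\alpha \in [0,1)$, I would use the Kato-Ponce fractional Leibniz rule to distribute $\langle D\rangle^{k' - 1 + \alpha}$ across the composition $H(u,\bar{u})$, combined with a fractional chain rule (in the spirit of Christ-Weinstein and Visan) to pass the fractional derivative through the outer function. The output is a sum of terms of the schematic form $\bigl(\prod_{j} \langle D\rangle^{s_{j}} u\bigr) \cdot H^{[m]}(u,\bar{u})$ with $\sum_{j} s_{j} = k' - 1 + \alpha$ and $m$ equal to the number of factors. Invoking the pointwise bound $|H^{[m]}| \lesssim |u|^{\beta + 1 - m} F(|u|^{2})$ together with Gagliardo-Nirenberg interpolation (and using the \emph{a priori} bound $X_{\check{k}}(I,u) \le Q$ to control intermediate-order Sobolev norms of $u$), each such term is dominated by a product containing exactly one copy of $\|\langle D\rangle^{k' - 1 + \alpha} u\|_{L_t^{q} L_x^{r}(I)}$, several copies of $\|u\|_{L_t^{2(n+1)/(n-2)} L_x^{2(n+1)/(n-2)}(I)}$, and one residual factor of the form $\| |u|^{\beta'} F(|u|^{2}) \|_{L_{t,x}^{p'}}$ which Proposition \ref{Prop:JensenTypeIneq} converts into $\check{F}(Q)$, just as in the base case.

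The main obstacle will be carrying out the fractional chain-rule step rigorously in this setting: the classical Christ-Weinstein/Visan chain rule is usually formulated for outer functions with power-type growth of derivatives, whereas here $F$ grows slowly and the natural bound on $F^{[j]}$ involves $F$ itself. The hypotheses (\ref{Eqn:FProp})-(\ref{Eqn:HypF}), in particular the concavity on large scales and the decay $F^{[j]}(x) = O(F(x)/x^{j})$, are calibrated precisely so that, after Proposition \ref{Prop:JensenTypeIneq} absorbs the $F(|u|^{2})$-weighted $L^{p}_{t,x}$ norm into $\check{F}(Q)$, the chain-rule inequality closes. The generalization from $F$ to a nondecreasing majorant $\tilde{F}$ requires no new ideas: the pointwise estimate $|H^{[i]}(u,\bar{u})| \lesssim |u|^{\beta + 1 - i} \tilde{F}(|u|^{2})$ is unchanged, and $\tilde{F}$ belongs to the class of functions to which Proposition \ref{Prop:JensenTypeIneq} applies.
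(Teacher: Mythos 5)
Your overall strategy (reduce to the Jensen-type inequality of Proposition \ref{Prop:JensenTypeIneq} via fractional product and chain rules plus the pointwise bound $|H^{[i]}| \lesssim |u|^{\beta+1-i}F(|u|^2)$ for $H := G\cdot F(|\cdot|^2)$) is the same idea the paper uses, and the pointwise chain-rule bound you derive from (\ref{Eqn:HypF}) and (\ref{Eqn:HypG}) is correct. But there is a genuine gap at the step you yourself flag as ``the main obstacle.'' You propose to handle all $k'$ in a single stroke by invoking a Kato--Ponce/Christ--Weinstein fractional chain rule that distributes $\langle D\rangle^{k'-1+\alpha}$ across the composition and produces terms of the schematic form $\bigl(\prod_j \langle D\rangle^{s_j}u\bigr) H^{[m]}(u,\bar u)$ with $\sum_j s_j = k'-1+\alpha$. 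No such off-the-shelf lemma exists for $k'-1+\alpha > 1$: the standard fractional chain rule (including the one the paper records as (\ref{Eqn:LeibnCompos})) is only available for orders in $(0,1]$, and the extension to higher orders is precisely what has to be proven here. Appealing to it is circular.

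The paper closes this gap by an explicit induction on $k'$ that never needs a chain rule above order one. At level $k'$ one writes $\langle D\rangle^{k'+\alpha}\approx \mathrm{Id}+\langle D\rangle^{k'-1+\alpha}\nabla$ (using the boundedness of $D\langle D\rangle^{-1}$), expands $\nabla\bigl(G(u,\bar u)F(|u|^2)\bigr)$ by the ordinary product rule into terms of the form $\partial_z G\,\nabla u\,F$, $\partial_{\bar z}G\,\overline{\nabla u}\,F$, and $G\,F'\,\Re(\bar u\nabla u)$, and then applies the fractional product rule (\ref{Eqn:LeibnProd}) together with the inductive hypothesis at level $k'-1$ (applied to the new outer functions $G_1\breve F$, $G_2\breve F$, etc.) to each piece. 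The interpolation identities for $\|Du\|_{L^{q_2}L^{r_2}}$ and $\|\langle D\rangle^{k'-1+\alpha}u\|_{L^{\bar q_1}L^{\bar r_1}}$ are then what allow the extra $\nabla u$ factor and the lower-order derivative on the outer function to recombine into $\|\langle D\rangle^{k'+\alpha}u\|_{L^qL^r}\,\|u\|_{L^{2(n+1)/(n-2)}_{t,x}}^\beta\,\check F(Q)$. Your base case should also cover $\alpha\in[0,1)$, not just $\alpha=0$, since the induction increments $k'$ by one and leaves $\alpha$ fixed; for $\alpha\in(0,1)$ one genuinely needs (\ref{Eqn:LeibnCompos}) rather than mere H\"older. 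So: right decomposition and right pointwise estimates, but you must replace ``apply a fractional chain rule'' with ``induct on $k'$, peeling off one gradient per step,'' which is where the actual content of the argument lies.
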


\begin{proof}

Recall the usual product rule for fractional derivatives

\begin{equation}
\begin{array}{ll}
\left\| \langle D \rangle^{\alpha_{1}} (fg) \right\|_{L^{\tilde{q}}} & \lesssim
\left\| \langle D \rangle^{\alpha_{1}} f \right\|_{L^{\tilde{q}_{1}}} \| g \|_{L^{\tilde{q}_{2}}} +
\| f \|_{L^{\tilde{q}_{3}}} \left\| \langle D \rangle^{\alpha_{1}} g \right\|_{L^{\tilde{q}_{4}}},
\end{array}
\label{Eqn:LeibnProd}
\end{equation}
and the Leibnitz rule for fractional derivative

\begin{equation}
\begin{array}{ll}
\left\| \langle D \rangle^{\alpha_{2}} H(f) \right\|_{L^{\tilde{q}}} & \lesssim \| \langle D  \rangle^{\alpha_{2}} f \|_{L^{\tilde{q}_{1}}}  \| \tilde{H}(f) \|_{L^{\tilde{q}_{2}}},
\end{array}
\label{Eqn:LeibnCompos}
\end{equation}
if $ 0 \leq \alpha_{1} < \infty $, $0 < \alpha_{2} \leq 1 $, $\tilde{q} \in (1,\infty)$, $ (\tilde{q}_{1}, \tilde{q}_{4}) \in (1,\infty)^{2}$,
$(\tilde{q}_{2}, \tilde{q}_{3}) \in (1,\infty]^{2}$, $\frac{1}{\tilde{q}}= \frac{1}{\tilde{q}_{1}} + \frac{1}{\tilde{q_{2}}} = \frac{1}{\tilde{q}_{3}}
+ \frac{1}{\tilde{q}_{4}}$, and $H$ is a $\mathcal{C}^{1}$ function that satisfies the following property: $H(0)=0$, there exists a function $\tilde{H}$ such that for
all $ \tau \in [0,1] $ we have $ \left| H^{'} \left( \tau x + (1- \tau)y  \right) \right| \lesssim \tilde{H} (x) + \tilde{H}(y) $ (see e.g Taylor \cite{taylor} and references therein ) \footnote{Notation abuse: $\tilde{H}(x)$, $H(x)$, and $H^{'}(x)$ mean
$\tilde{H}(x,\bar{x})$, $H(x,\bar{x})$, and $H^{'}(x,\bar{x})$ respectively. } \\
Let $ 1 < p < \infty$. We recall the following facts that we use throughout the proof. \\
\underline{Fact $1$}: the multiplier $D \langle D \rangle^{-1}$ is bounded as an operator from $L^{p}$ to $L^{p}$. This follows from the H\"ormander-Mikhlin multiplier theorem. \\
\underline{Fact $2$}:  $\| \langle D \rangle f \|_{L^{p}} \lesssim  \| f \|_{L^{p}} + \| D f \|_{L^{p}} $. This follows from the decomposition $f = P_{0} f + (Id - P_{0})f$, the triangle inequality, and the previous fact. \\
\underline{Fact $3$}: $\| D f \|_{L^{p}} \approx \| \nabla f \|_{L^{p}}$: this follows from the boundedness of the Riesz transforms $R_{j}$ defined by $ \widehat{R_{j} f}(\xi)    := -i \frac{ \xi_{j} }{ |\xi | }  \hat{f}(\xi) $  for $ j \in \{ 1,2,...,n \} $ \footnote{Indeed we have $\|\nabla f \|_{L^{p}} \lesssim \sum \limits_{j=1}^{n} \| \p_{x_{j}} f  \|_{L^{p}} \lesssim  \| D f \|_{L^{p}} $. We also have $ f =  \sum \limits_{j=1}^{n} R_{j} D^{-1}  \p_{x_{j}}  f $  which implies that $ \| f \|_{L^{p}} \lesssim \| D^{-1} \nabla f \|_{L^{p}} $ (in other words $\| D f \|_{L^{p}} \lesssim  \| \nabla f \|_{L^{p}}$ ).} \\
\\
Assume that $k' = 1 $. Proposition \ref{Prop:JensenTypeIneq} shows that

\begin{equation}
\begin{array}{ll}
\| \langle D \rangle^{\alpha} \left( G(u,\bar{u}) F(|u|^{2}) \right) \|_{L_{t}^{\bar{q}} L_{x}^{\bar{r}}(I)} &
\lesssim \| \langle D  \rangle^{\alpha} u \|_{L_{t}^{q} L_{x}^{r}(I)}
\| u^{\beta} F(|u|^{2}) \|_{L_{t}^{\frac{2(n+1)}{(n-2) \beta}} L_{x}^{\frac{2(n+1)}{(n-2) \beta}}(I)} \\
& \lesssim \| \langle D  \rangle^{\alpha} u \|_{L_{t}^{q} L_{x}^{r}(I)} \| u F^{\beta^{-1}} (|u|^{2}) \|^{\beta}_{L_{t}^{\frac{2(n+1)}{n-2}}
L_{x}^{\frac{2(n+1)}{n-2}}(I)} \\
& \lesssim \| \langle D  \rangle^{\alpha} u \|_{L_{t}^{q} L_{x}^{r}(I)} \| u \|^{\beta}_{L_{t}^{\frac{2(n+1)}{n-2}} L_{x}^{\frac{2(n+1)}{n-2}}(I)} \check{F}(Q)
\end{array}
\nonumber
\end{equation}
The more general statement follows exactly the same steps and its proof is left to the reader.\\
\\
Assume that the results holds for $k' > 1 $. Let us prove that it also holds for $k'+ 1$. We have

\begin{equation}
\begin{array}{ll}
\left\| \langle D \rangle^{k' + \alpha} \left( G(u,\bar{u}) F(|u|^{2})  \right) \right\|_{L_{t}^{\bar{q}} L_{x}^{\bar{r}}(I)} \\
\lesssim  \| |u|^{\beta + 1} F( |u|^{2}) ) \|_{L_{t}^{\bar{q}} L_{x}^{\bar{r}}(I)} +
\left\| \langle D \rangle^{k' - 1 + \alpha} \nabla ( G(u,\bar{u}) F(|u|^{2}) ) \right\|_{L_{t}^{\bar{q}} L_{x}^{\bar{r}}(I)} \\
\lesssim \| u \|_{L_{t}^{q} L_{x}^{r}(I)}  \| |u|^{\beta}  F(|u|^{2}) \|_{L_{t}^{\frac{2(n+1)}{(n-2) \beta}} L_{x}^{\frac{2(n+1)}{(n-2) \beta}}(I)}
+  \| \langle D \rangle^{k' - 1 + \alpha} ( \p_{z}G(u,\bar{u}) \nabla u  F(|u|^{2}) ) \|_{L_{t}^{\bar{q}} L_{x}^{\bar{r}}(I)} \\
+ \| \langle D \rangle^{k' - 1 + \alpha} ( \p_{\bar{z}} G(u,\bar{u}) \overline{\nabla u}  F(|u|^{2}) ) \|_{L_{t}^{\bar{q}} L_{x}^{\bar{r}}(I)}
+ \left\| \langle D \rangle^{k' -1 + \alpha} \left( G(u,\bar{u}) F^{'}(|u|^{2}) \Re ( \bar{u} \nabla u ) \right) \right\|_{L_{t}^{\bar{q}} L_{x}^{\bar{r}} (I)} \\
\lesssim Y_{1} + Y_{2} + Y_{3} + Y_{4}
\end{array}
\nonumber
\end{equation}
Let $\theta := \frac{1}{k' + \alpha}$. Let $(\bar{q}_{1},\bar{r}_{1})$ be such that
$\left( \frac{1}{\bar{q}_{1}}, \frac{1}{\bar{r}_{1}} \right) = (1 - \theta) \left( \frac{1}{q}, \frac{1}{r} \right) + \theta \frac{(n-2)}{2(n+1)} (1,1)$. Let
$(q_{1},q_{2},r_{1},r_{2})$ be such that $ \left( \frac{1}{q_{1}}, \frac{1}{r_{1}} \right) = (\beta -1)  \frac{(n-2)}{2(n+1)} (1,1)
+ \left( \frac{1}{\bar{q}_{1}}, \frac{1}{\bar{r}_{1}} \right) $ and
$ \left( \frac{1}{\bar{q}}, \frac{1}{\bar{r}} \right) = \left( \frac{1}{q_{1}}, \frac{1}{r_{1}} \right) + \left( \frac{1}{q_{2}}, \frac{1}{r_{2}} \right)$.
Observe that  $\left(  \frac{1}{q_{2}}, \frac{1}{r_{2}}\right) = \theta \left( \frac{1}{q}, \frac{1}{r} \right)
+ (1- \theta) \frac{ (n-2)}{2(n+1)} (1,1) $. \\
\\
We have

\begin{equation}
\begin{array}{ll}
Y_{1} & \lesssim \| u \|_{L_{t}^{q} L_{x}^{r}(I)} \| u F^{\beta^{-1}}(|u|^{2}) \|^{\beta}_{L_{t}^{\frac{2(n+1)}{n-2}} L_{x}^{\frac{2(n+1)}{n-2}} (I)} \\
& \lesssim \| \langle D  \rangle^{k'+ \alpha} u \|_{L_{t}^{q} L_{x}^{r}(I)}   \| u \|^{\beta}_{L_{t}^{\frac{2(n+1)}{n-2}} L_{x}^{\frac{2(n+1)}{n-2}}(I)} \check{F}(Q)
\end{array}
\nonumber
\end{equation}

\begin{equation}
\begin{array}{ll}
Y_{2} & \lesssim \| \langle D \rangle^{k' - 1 + \alpha} ( \p_{z}G(u,\bar{u}) F(|u|^{2}) ) \|_{L_{t}^{q_{1}} L_{x}^{r_{1}}(I)}
\| \nabla u \|_{L_{t}^{q_{2}} L_{x}^{r_{2}}(I)}  \\
& + \| \langle D \rangle^{k' + \alpha} u \|_{ L_{t}^{q} L_{x}^{r}(I)}  \| \p_{z} G(u,\bar{u}) F(|u|^{2}) \|_{L_{t}^{\frac{2(n+1)}{(n-2) \beta}} L_{x}^{\frac{2(n+1)}{(n-2) \beta}}(I)} \\
& \lesssim Y_{2,1} + Y_{2,2} \cdot
\end{array}
\label{Eqn:X2}
\end{equation}
We first estimate $Y_{2,2}$. We have

\begin{equation}
\begin{array}{ll}
Y_{2,2} & \lesssim  \| \langle D \rangle^{k' + \alpha} u \|_{ L_{t}^{q} L_{x}^{r}(I)}
\| u^{\beta} F (|u|^{2}) \| _{L_{t}^{\frac{2(n+1)}{n-2} \beta} L_{x}^{\frac{2(n+1)}{(n-2) \beta}}(I)} \\
& \lesssim \| \langle D  \rangle^{k' + \alpha} u \|_{L_{t}^{q} L_{x}^{r}(I)}
\| u \|^{\beta}_{L_{t}^{\frac{2(n+1)}{n-2}} L_{x}^{\frac{2(n+1)}{n-2}} (I)} \check{F}(Q)  \cdot
\end{array}
\label{Eqn:EstX22Est}
\end{equation}
We then estimate $Y_{2,1}$.  By interpolation we get

\begin{equation}
\begin{array}{ll}
\| D u \|_{L_{t}^{q_{2}} L_{x}^{r_{2}}(I)} & \lesssim
\| D^{k' + \alpha} u \|^{\theta}_{L_{t}^{q} L_{x}^{r} (I)} \| u \|^{1- \theta}_{L_{t}^{\frac{2(n+1)}{n-2}} L_{x}^{\frac{2(n+1)}{n-2}}(I)}  \\
\| \langle D \rangle^{k' - 1 + \alpha} u \|_{L_{t}^{\bar{q}_{1}} L_{x}^{\bar{r}_{1}}(I)} & \lesssim
\| \langle D \rangle^{k' + \alpha} u \|^{1- \theta}_{L_{t}^{q} L_{x}^{r} (I)}  \| u \|^{\theta}_{L_{t}^{\frac{2(n+1)}{n-2}} L_{x}^{\frac{2(n+1)}{n-2}}(I)}
\end{array}
\nonumber
\end{equation}
Hence

\begin{equation}
\begin{array}{ll}
Y_{2,1} & \lesssim \| \langle D \rangle^{k'-1 + \alpha} u  \|_{L_{t}^{\bar{q}_{1}} L_{x}^{\bar{r}_{1}} (I)}
\| u \|^{\beta - 1}_{L_{t}^{\frac{2(n+1)}{n-2}} L_{x}^{\frac{2(n+1)}{n-2}}(I)} \check{F}(Q)  \| D u \|_{L_{t}^{q_{2}} L_{x}^{r_{2}} (I)} \\
& \lesssim \| \langle D  \rangle^{k' + \alpha} u \|_{L_{t}^{q} L_{x}^{r}(I)}
\| u \|^{\beta}_{L_{t}^{\frac{2(n+1)}{n-2}} L_{x}^{\frac{2(n+1)}{n-2}} (I)} \check{F}(Q) \cdot
\end{array}
\nonumber
\end{equation}
Similarly $ Y_{3} \lesssim  \| \langle D  \rangle^{k' + \alpha} u \|_{L_{t}^{q} L_{x}^{r}(I)}
\| u \|^{\beta}_{L_{t}^{\frac{2(n+1)}{n-2}} L_{x}^{\frac{2(n+1)}{n-2}} (I)} \check{F}(Q) $. Let
$ \breve{F} (x) := x F^{'}(x) $, $ G_{1}(x,\bar{x}) :=  \frac{G(x,\bar{x})}{x} $, and
$G_{2}(x,\bar{x}) :=  \frac{G(x,\bar{x})}{\bar{x}}$ \footnote{Notation abuse: ``$\frac{G(x,\bar{x})}{x}$'' means $\frac{G(x,\bar{x})}{x}$ if $x \neq 0$ and it means ``$0$''
if $x=0$. We use a similar notation regarding ``$\frac{G(x,\bar{x})}{\bar{x}}$''}. From the induction assumption and elementary estimates of
$\reallywidecheck{\breve{F}}$  we see that

\begin{equation}
\begin{array}{ll}
Y_{4} & \lesssim
\left\| \langle D \rangle^{k' -1 + \alpha} \left( G_{1}(u,\bar{u}) \breve{F}(|u|^{2}) \nabla u \right)  \right\|_{L_{t}^{\bar{q}} L_{x}^{\bar{r}} (I)}
+  \left\| \langle D \rangle^{k' -1 + \alpha} \left( G_{2}(u,\bar{u}) \breve{F}(|u|^{2})  \nabla \bar{u} \right)  \right\|_{L_{t}^{\bar{q}} L_{x}^{\bar{r}} (I)} \\
& \lesssim  \sum \limits_{i \in \{ 1,2 \} }  \left\| \langle D \rangle^{k'-1 + \alpha} \left( G_{1}(u,\bar{u}) \breve{F}(|u|^{2}) \right) \right\|_{L_{t}^{q_{1}} L_{x}^{r_{1}}(I)}
\| \nabla u \|_{L_{t}^{q_{2}} L_{x}^{r_{2}}(I)} \\
& + \left\| \langle D \rangle^{k' + \alpha} u  \right\|_{L_{t}^{q} L_{x}^{r} (I)} \sum \limits_{i \in \{ 1,2 \} }  \| G_{i}(u,\bar{u}) \breve{F}(|u|^{2})  \|_{L_{t}^{\frac{2(n+1)}{(n-2) \beta}} L_{t}^{\frac{2(n+1)}{(n-2) \beta}}(I) } \\
& \lesssim \left\| \langle D \rangle^{k' -1 + \alpha} u \right\|_{L_{t}^{\bar{q}_{1}} L_{x}^{\bar{r}_{1}}(I)} \| u \|^{\beta-1}_{L_{t}^{\frac{2(n+1)}{n-2}}  L_{x}^{\frac{2(n+1)}{n-2}}(I)} \check{F}(Q)  \| \nabla u \|_{L_{t}^{q_{2}} L_{x}^{r_{2}} (I)}  \\
& + \left\| \langle D \rangle^{k' + \alpha} u  \right\|_{L_{t}^{q} L_{x}^{r} (I)}
\sum \limits_{i \in \{ 1,2 \} }  \| G_{i}(u,\bar{u}) \breve{F}(|u|^{2})  \|_{L_{t}^{\frac{2(n+1)}{(n-2) \beta}} L_{t}^{\frac{2(n+1)}{(n-2) \beta}}(I) }  \\
& \lesssim Y_{4,1} + Y_{4,2} \cdot
\end{array}
\nonumber
\end{equation}
We estimate $Y_{4,1}$ (resp. $Y_{4,2}$ ) in a similar way as $Y_{2,1}$ (resp. $Y_{2,2}$), taking into account the pointwise
estimate $ \left| G_{i}(u,\bar{u}) \breve{F}(|u|^{2}) \right| \lesssim \left| F(|u|^{2}) u^{\beta} \right| $ for $i \in \{ 1,2 \}$. We get

\begin{equation}
\begin{array}{ll}
Y_{4} & \lesssim
\| \langle D  \rangle^{k' + \alpha} u \|_{L_{t}^{q} L_{x}^{r}(I)}
\| u \|^{\beta}_{L_{t}^{\frac{2(n+1)}{n-2}} L_{x}^{\frac{2(n+1)}{n-2}} (I)} \check{F}(Q) \cdot
\end{array}
\end{equation}
The more general statement follows exactly the same steps and its proof is left to the reader.

\end{proof}

\section{Proof of Proposition \ref{Prop:GlobWellPosedCrit}}
\label{Sec:CritBlowUp}

Assume that $ \| u \|_{L_{t}^{\frac{2(n+1)}{n-2}} L_{x}^{\frac{2(n+1)}{n-2}} (I_{max})} < \infty $. \\
Let $ \tilde{k}  = \min ( k, \bar{k} ) $ with $\bar{k}$ defined as follows:

\begin{equation}
\left\{
\begin{array}{l}
n=3: \; \bar{k} := \frac{9}{2}-, \\
n=4: \; \bar{k} := \frac{5}{2}-, \; \text{and} \\
n=5: \; \bar{k}:= \frac{5}{2}-
\end{array}
\right.
\nonumber
\end{equation}
The number $\tilde{k}$ will allow us to use Proposition \ref{Prop:LeibnJensen} \footnote{in particular the assumption
$\beta > k' -1 $ with $ \beta:= \frac{4}{n-2} $ will be satisfied }.\\ Let $ 0 <  \epsilon \ll 1$ be a constant small enough such that all the estimates below are true.
Let $K:= [0,a] \subset I_{\max}$. Let $ t \in K $. We see from Proposition \ref{Prop:LeibnJensen} and (\ref{Eqn:StrichEst}) that there exist $ C \gtrsim 1 $ and
$ C' \gtrsim  1$ such that

\begin{equation}
\begin{array}{ll}
X_{\tilde{k}} \left( [0,t],u \right) & \leq C \left\| (u_{0},u_{1}) \right\|_{H^{k} \times H^{k-1} } + C \left\| \langle D \rangle^{\tilde{k} - \frac{1}{2}} \left( |u|^{1_{2}^{*}-2} u g(|u|) \right)
\right\|_{L_{t}^{\frac{2(n+1)}{n+3}} L_{x}^{\frac{2(n+1)}{n+3}}([0,t])  }  \\
& \leq C'  \| (u_{0},u_{1}) \|_{H^{k} \times H^{k-1}}
+ C' \| \langle D \rangle^{\tilde{k} - \frac{1}{2}} u \|_{L_{t}^{\frac{2(n+1)}{n-1}} L_{x}^{\frac{2(n+1)}{n-1}}([0,t])}
\| u \|^{1_{2}^{*}-2}_{L_{t}^{\frac{2(n+2)}{n-2}} L_{x}^{\frac{2(n+1)}{n-2}}([0,t])}  \\
& g \left( X_{\tilde{k}}([0,t],u) \right)
\end{array}
\label{Eqn:EstXkp}
\end{equation}
Assume now that

\begin{equation}
\begin{array}{ll}
\| u \|_{L_{t}^{\frac{2(n+1)}{n-2}} L_{x}^{\frac{2(n+1)}{n-2}} (K)} & \leq \frac{\epsilon}{g^{\frac{n-2}{4}} \left( 2 C' \| (u_{0},u_{1}) \|_{H^{k}}  \right)} \cdot
\end{array}
\label{Eqn:PropK}
\end{equation}
Then we see from (\ref{Eqn:EstXkp}), (\ref{Eqn:PropK}), and a continuity argument that $ X_{\tilde{k}} ( K ,u ) \leq 2 C' \| (u_{0},u_{1}) \|_{H^{k} \times H^{k-1}} $. Since

\begin{equation}
\begin{array}{ll}
\sum \limits_{j=1}^{\infty} \frac{\epsilon^{\frac{2(n+1)}{n-2}}}{g^{\frac{n+1}{2}} \left( (2 C')^{j} \| (u_{0},u_{1}) \|_{H^{k} \times H^{k-1}} \right) } = \infty,
\end{array}
\nonumber
\end{equation}
we see that we can partition $ I_{\max} \cap [0,\infty) $ into subintervals $ (K_{j})_{1 \leq j \leq J }$ (with $J < \infty$) such that
$ \| u \|_{L_{t}^{\frac{2(n+1)}{n+2}} L_{x}^{\frac{2(n+1)}{n+2}} (K_{j})}
= \frac{\epsilon}{g^{\frac{n-2}{4}} \left( (2 C')^{j} \| (u_{0},u_{1}) \|_{H^{k} \times H^{k-1}} \right)} $ and
$ \| u \|_{L_{t}^{\frac{2(n+1)}{n+2}} L_{x}^{\frac{2(n+1)}{n+2}} (K_{J})}
\leq \frac{\epsilon}{g^{\frac{n-2}{4}} \left( (2 C')^{j} \| (u_{0},u_{1}) \|_{H^{k} \times H^{k-1}} \right)} $. By iteration over $j$ we get
$X_{\tilde{k}} \left( I_{\max} \cap [0,\infty), u \right) < \infty$. Proceeding similarly on $I_{max} \cap (- \infty, 0]$ we also get
$ X_{\tilde{k}} \left( I_{\max} \cap (- \infty , 0], u \right) < \infty $. Hence  $ X_{\tilde{k}} ( I_{\max}, u ) < \infty $. \\
We then prove that $X_{k}(I_{max}, u) < \infty $ by using nonlinear estimates (see Lemma \ref{lem:NonlinearControl} below)
and an induction process on $k$ : see Appendix $B$ and Appendix $C$ in \cite{triroyrad} for a similar argument.

\begin{lem}
Let $ 1 \geq \delta' \geq 0 $. Assume that $\| u \|_{L_{t}^{\frac{2(n+1)}{n-2}} L_{x}^{\frac{2(n+1)}{n-2}} (I) } \leq \delta'$.
Let

\begin{equation}
A_{k}(I,u) :=
\left\{
\begin{array}{l}
\left\| \langle D \rangle^{k - \frac{1}{2}} \left( |u|^{1_{2}^{*}-2} u g(|u|) \right) \right\|_{L_{t}^{\frac{2(n+1)}{n+3}} L_{x}^{\frac{2(n+1)}{n+3}} (I)} \; \text{if} \; n \in \{ 3,4 \} \\
\left\| \langle D \rangle^{k-1} \left( |u|^{1_{2}^{*}-2} u g(|u|) \right) \right\|_{L_{t}^{1} L_{x}^{2} (I)} \; \text{if} \; n=5
\end{array}
\right.
\nonumber
\end{equation}
Then there exist $\bar{c} > 0 $ and $\bar{C} > 0$ such that

\begin{equation}
\begin{array}{l}
A_{k}(I,u) \lesssim (\delta')^{\bar{c}} X_{k} (I,u) \left( \langle  X_{\tilde{k}} (I,u) \rangle^{\bar{C}}
+ \langle X_{ k- \frac{1}{4}} (I,u) \rangle^{\bar{C}} \right)  \cdot
\end{array}
\label{Eqn:PropNonLin1}
\end{equation}

\label{lem:NonlinearControl}
\end{lem}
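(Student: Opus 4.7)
The plan is to cast the nonlinearity $|u|^{1_{2}^{*}-2}u\,g(|u|)$ as $G(u,\bar u)\,F(|u|^2)$ with $G(z,\bar z):=|z|^{\beta}z$, $\beta:=1_{2}^{*}-2=\tfrac{4}{n-2}$, and $F(s):=\log^{\gamma}\log(10+s)$, so that $F(|u|^2)=g(|u|)$, and then to invoke Proposition~\ref{Prop:LeibnJensen}. First I would verify that $G$ and $F$ meet the structural hypotheses (\ref{Eqn:FProp}), (\ref{Eqn:HypF}), (\ref{Eqn:HypG}): iterated logs are non-decreasing, concave on some $(B,\infty)$, slowly varying (so $F^{\mu}(x^{\epsilon})\gtrsim F^{\mu}(x)$ for large $x$ and $F^{\mu}(x^{\nu})\lesssim F^{\mu}(x)$) and satisfy $F^{[i]}=O(F/x^i)$; the model nonlinearity $G$ obeys $G^{[i]}=O(|z|^{\beta+1-i})$ up to the order of smoothness $k'\leq\lfloor\beta\rfloor+1$. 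The assumption $\|u\|_{L_t^{\frac{2(n+1)}{n-2}}L_x^{\frac{2(n+1)}{n-2}}(I)}\leq\delta'$ fed into the $\|u\|^{\beta}$ factor on the right-hand side of (\ref{Eqn:FracLeibnk}) will furnish the prefactor $(\delta')^{\bar c}$ with $\bar c=\beta>0$.

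Second, when $\tilde k=k$ --- that is, always for $n=5$ (since $k<\tfrac{7}{3}<\bar k$) and for $n\in\{3,4\}$ whenever $k\leq\bar k$ --- I would apply Proposition~\ref{Prop:LeibnJensen} directly with $\check k=k$ and $k'-1+\alpha$ equal to $k-\tfrac{1}{2}$ (respectively $k-1$ for $n=5$). The Strichartz pairs forced by the scaling constraint of the proposition, namely $(\bar q,\bar r)=(\tfrac{2(n+1)}{n+3},\tfrac{2(n+1)}{n+3})$ paired with $(q,r)=(\tfrac{2(n+1)}{n-1},\tfrac{2(n+1)}{n-1})$ for $n\in\{3,4\}$, and the analogous admissible pair against $L_t^{1}L_x^{2}$ for $n=5$ (using the $L_t^{2}H^{k-1,\frac{2n}{n-3}}$ component of $X_k$), place the first factor of (\ref{Eqn:FracLeibnk}) inside $X_k(I,u)$. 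Since $\check F$ is an iterated logarithm, it is dominated by $\langle X_{\tilde k}\rangle^{\bar C}$ for any $\bar C>0$, and we obtain the first summand of (\ref{Eqn:PropNonLin1}).

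The main obstacle is the regime $n\in\{3,4\}$ with $k>\bar k$, where $k-\tfrac{1}{2}$ exceeds the smoothness threshold $\beta+1=\tfrac{n+2}{n-2}$ of $G$, so Proposition~\ref{Prop:LeibnJensen} cannot be invoked with $\check k=k$. Here I would adapt the induction/decomposition argument of \cite{triroyrad} (Appendix B--C): by iterating the product rule (\ref{Eqn:LeibnProd}) and the chain rule (\ref{Eqn:LeibnCompos}), isolate a top-order factor $\langle D\rangle^{k-1/2}u$ (controlled by $X_k$) and treat the accompanying lower-order expressions via Proposition~\ref{Prop:LeibnJensen} applied with $\check k:=k-\tfrac{1}{4}$, using interpolation between the $L_t^{\infty}H^{k}$ norm and the Strichartz-type norm on $\langle D\rangle^{k-1/2}u$ to redistribute fractional derivatives below the threshold $\beta+1$. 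The resulting factor $\check F(X_{k-1/4}(I,u))\lesssim\langle X_{k-1/4}\rangle^{\bar C}$ produces the second summand of (\ref{Eqn:PropNonLin1}), while the prefactor $(\delta')^{\bar c}$ is again extracted from an $L_t^{\frac{2(n+1)}{n-2}}L_x^{\frac{2(n+1)}{n-2}}$ piece by Hölder. The delicate part is the bookkeeping: distributing $k-\tfrac{1}{2}$ fractional derivatives across the product so that every intermediate Hölder exponent lands either on $X_k$, $X_{k-1/4}$, or $X_{\tilde k}$, while still extracting a positive power of $\delta'$ and keeping every composition below the smoothness threshold of $G$.
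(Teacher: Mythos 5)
Your plan to write the nonlinearity as $G(u,\bar u)F(|u|^2)$ with $G(z,\bar z)=|z|^{\beta}z$, $\beta=\frac{4}{n-2}$, $F(s)=\log^{\gamma}\log(10+s)$, and then invoke Proposition \ref{Prop:LeibnJensen}, does not match the paper's proof, and for $n=5$ it fails. For $n=5$ the target space in the lemma is $L_{t}^{1}L_{x}^{2}$; Proposition \ref{Prop:LeibnJensen} requires $\bar q\in(1,\infty]$, so $\bar q=1$ is outside its hypotheses, and even if that were waived, the scaling constraint $\left(\frac{1}{\bar q},\frac{1}{\bar r}\right)=\left(\frac{1}{q},\frac{1}{r}\right)+\frac{(n-2)\beta}{2(n+1)}(1,1)$ with $\beta=\frac{4}{3}$, $n=5$ forces $(q,r)=(3/2,6)$. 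The norm $\left\|\langle D\rangle^{k-1}u\right\|_{L_{t}^{3/2}L_{x}^{6}(I)}$ is not one of the components of $X_{k}(I,u)$ and is not controlled by it (passing from $L_{x}^{5}$ to $L_{x}^{6}$ would cost $\tfrac{1}{6}$ extra derivative, and passing from $L_{t}^{\infty}L_{x}^{2}$ would cost $\tfrac{5}{3}$); so you cannot "place the first factor of (\ref{Eqn:FracLeibnk}) inside $X_{k}$''. The requirement that the two H\"older exponents be shifted by the same amount is exactly what breaks: the $n=5$ case needs the asymmetric split $L_{t}^{1}L_{x}^{2}=L_{t}^{2}L_{x}^{5}\cdot L_{t}^{2}L_{x}^{10/3}$, which is outside the form of Proposition \ref{Prop:LeibnJensen}.

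The paper sidesteps this entirely: Appendix $A$ does \emph{not} use Proposition \ref{Prop:LeibnJensen} for this lemma. It works directly from the product rule (\ref{Eqn:LeibnProd}) and chain rule (\ref{Eqn:LeibnCompos}) combined with the crude pointwise bound $g'(|f|)|f|+g(|f|)\lesssim 1+|f|^{0+}$, H\"older, and interpolation. The crude bound on $g$ is precisely what buys the freedom to place the $u$-factors in mismatched Strichartz slots (as the $n=5$ split above shows), at the price of losing the slowly-varying $\check F(Q)$ gain --- but that gain is irrelevant here, since (\ref{Eqn:PropNonLin1}) only asks for polynomial growth $\langle X_{\tilde k}\rangle^{\bar C}+\langle X_{k-\frac{1}{4}}\rangle^{\bar C}$. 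The $(\delta')^{\bar c}$ prefactor is then extracted by interpolation steps such as (\ref{Eqn:InitInterpStrichLittle}) and (\ref{Eqn:EstFourtyNine}), not from a clean $\|u\|^{\beta}$ factor, so in general $\bar c<\beta$. Two further imprecisions: for $n\in\{3,4\}$ with $\tilde k=k$ your invocation does work and is a legitimate alternative route, but the $\langle X_{k-\frac{1}{4}}\rangle^{\bar C}$ summand is not tied to the regime $k>\bar k$ as you suggest --- in the paper's estimates it appears whenever $m\geq 1$, i.e.\ $k\geq\tfrac{3}{2}$ for $n\in\{3,4\}$, already in estimates like (\ref{Eqn:LowNormg}); and your proposed handling of $k>\bar k$ by "Proposition~\ref{Prop:LeibnJensen} with $\check k=k-\tfrac{1}{4}$'' glosses over the fact that the top factor $\langle D\rangle^{k-\frac{1}{2}}u$ you isolate still carries more derivatives than $\beta+1$, so the remaining composition has to be decomposed by hand (the paper's $X''$ terms in (\ref{Eqn:WriteXDer})) rather than fed back into the Jensen machinery.
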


We postpone the proof of Lemma \ref{lem:NonlinearControl} to Appendix $A$. Assume that $ \tilde{k} < k < \tilde{k} + \frac{1}{4} $.
Let $ \delta^{'} > 0 $ be small enough such that all the statements below hold. Let $K := [0,a] \subset I_{max}$ be such that $\| u \|_{L_{t}^{\frac{2(n+1)}{n-2}} L_{x}^{\frac{2(n+1)}{n-2}} (K)} \leq \delta^{'}$. Let $t \in K$. Let $\bar{M}$ be a large constant such that $X_{\tilde{k}}(I_{max},u) \leq \bar{M}$. We see from
(\ref{Eqn:StrichEst}) and (\ref{Eqn:StrichEst2}) that  $X_{k}([0,t],u) \lesssim \left\| (u_{0},u_{1}) \right\|_{H^{k} \times H^{k-1} } + A_{k}([0,t],u) $. Hence, by applying Lemma \ref{lem:NonlinearControl}, there exists $C' \gtrsim 1$ such that

\begin{equation}
\begin{array}{ll}
X_{k} \left( [0,t],u \right) & \leq C' \left\| (u_{0},u_{1}) \right\|_{H^{k} \times H^{k-1} }  +
C' (\delta^{'})^{\bar{c}}  \langle \bar{M} \rangle^{\bar{C}}  X_{k} \left( [0,t],u \right)
\end{array}
\label{Eqn:EstXkp}
\end{equation}
A continuity argument shows that $ X_{k} ( K,u ) \leq 2 C' \left\| (u_{0},u_{1}) \right\|_{H^{k} \times H^{k-1} }  $. More generally let
$ j \in \mathbb{N} $ and let $ \tilde{k} + \frac{j}{4} \leq k <  \tilde{k} + \frac{j+1}{4}  $. Assume that
$X_{k'} ( K, u ) < \infty$ for $ \tilde{k} \leq  k' < \tilde{k} + \frac{j}{4}  $. Then by using a similar procedure as above we see that $ X_{k}( K,u ) < \infty $. By using again an iteration procedure (see proof of
$ X_{\tilde{k}} ( I_{\max}, u ) < \infty $) we see that $X_{k} (I_{max},u) < \infty$. \\
\\
We write $I_{max} = (a_{max},b_{max})$.  Observe from Remark \ref{Rem:Delta} and the conclusion above that
$ \inf \limits_{ t \in I_{max}} \delta \left( \left\| \left( u(t), \partial_{t} u(t) \right) \right\|_{H^{k} \times H^{k-1} }  \right) > 0 $. We still denote this infimum by $\delta$ for the sake of simplicity. We may assume WLOG that $\delta > 0 $ is small enough such that all the statements below are true. Choose $ \bar{t} < b_{max} $ close enough to $b_{max}$ so that $ \| u \|_{L_{t}^{\frac{2(n+1)}{n-2}} L_{x}^{\frac{2(n+1)}{n-2}}([\bar{t}, b_{max}))} \ll \delta $ and
$ \left\| \langle D \rangle^{\tilde{k} - \frac{1}{2}} u \right\|_{ L_{t}^{\frac{2(n+1)}{n-1}} L_{x}^{\frac{2(n+1)}{n-1}} ([\bar{t}, b_{max}))} \ll  \delta $ such all the estimates below are true. Let $r$ be such that $\frac{n-2}{2(n+1)} + \frac{n}{r} = \frac{n}{2} - \frac{1}{2}$. By the Sobolev embedding
$H^{\tilde{k} - \frac{1}{2}, r} \hookrightarrow L^{\frac{2(n+1)}{n-2}} $ we see that

\begin{equation}
\begin{array}{ll}
\| u_{nl,\bar{t}} \|_{L_{t}^{\frac{2(n+1)}{n-2}} L_{x}^{\frac{2(n+1)}{n-2}} ( [\bar{t}, b_{max} ) )} & \lesssim
\| \langle D \rangle^{\tilde{k} - \frac{1}{2}} u_{nl,\bar{t}} \|_{L_{t}^{\frac{2(n+1)}{n-2}} L_{x}^{r}([\bar{t}, b_{max} ))} \\
& \lesssim \left\| \langle D \rangle^{\tilde{k} - \frac{1}{2}} \left( |u|^{1_{2}^{*}-2} u g(|u|) \right) \right\|_{L_{t}^{\frac{2(n+1)}{n+3}} L_{x}^{\frac{2(n+1)}{n+3}}
 ([ \bar{t}, b_{max} ))} \\
& \lesssim \left\| \langle D \rangle^{\tilde{k} - \frac{1}{2}} u \right\|_{L_{t}^{\frac{2(n+1)}{n-1}} L_{x}^{\frac{2(n+1)}{n-1}}([ \bar{t}, b_{max} ))}
\| u \|^{1_{2}^{*} -2}_{L_{t}^{\frac{2(n+1)}{n+2}}  L_{x}^{\frac{2(n+1)}{n+2}}  ([ \bar{t}, b_{max} ))} \\
& g \left( X_{\tilde{k}} \left( [ \bar{t}, b_{max}) ,u \right) \right)   \\
& \\
& \ll \delta^{\frac{4}{n-2}} \cdot
\end{array}
\nonumber
\end{equation}
 Let $ T(t) := \cos \left( (t - \bar{t}) \langle D \rangle \right) u(\bar{t})  +
\frac{ \sin \left( (t - \bar{t}) \langle D \rangle \right)  \partial_{t} u(\bar{t}) }{ \langle D \rangle}
$. The triangle inequality yields

\begin{equation}
\begin{array}{ll}
\| T \|_{L_{t}^{\frac{2(n+1)}{n+2}}  L_{x}^{\frac{2(n+1)}{n+2}} \left( [\bar{t}, b_{max} ) \right) }
& \lesssim \| u \|_{L_{t}^{\frac{2(n+1)}{n+2}} L_{x}^{\frac{2(n+1)}{n+2}} \left( [\bar{t},b_{max}) \right) }  +
\| u_{nl} \|_{L_{t}^{\frac{2(n+1)}{n-2}} L_{x}^{\frac{2(n+1)}{n-2}} \left( [ \bar{t}, b_{max}) \right)} \\
& \leq \frac{\delta}{2} \cdot
\end{array}
\nonumber
\end{equation}
We also have $ \| T \|_{L_{t}^{\frac{2(n+1)}{n+2}}  L_{x}^{\frac{2(n+1)}{n+2}} \left( [ \bar{t}, \infty ) \right) }  \lesssim
\left\| \left( u(\bar{t}), \partial_{t} u(\bar{t}) \right) \right\|_{H^{k} \times H^{k-1}} < \infty $. These facts combined
with elementary considerations \footnote{i.e the continuity of $ s  \rightarrow  \| T \|_{\left( [\bar{t}, b_{max} + s] \right)} $, that is a
consequence of the monotone convergence theorem } show that there exists $\epsilon > 0 $ such that
$ \| T \|_{L_{t}^{\frac{2(n+1)}{n+2}}  L_{x}^{\frac{2(n+1)}{n+2}} \left( [ \bar{t}, b_{max} + \epsilon ] \right) }  \leq \delta $.
Hence contradiction with Proposition \ref{Prop:LocalWell}.

\section{Proof of Theorem \ref{thm:main}}   
\label{Sec:Thmmain}

Let $\tilde{k}$ be the number defined in Section \ref{Sec:CritBlowUp}. Again the number $\tilde{k}$ will allow us to
use Proposition \ref{Prop:LeibnJensen}. Our goal is to find a finite bound of $ X_{\tilde{k}} \left( [0,\epsilon_{0}), u \right) $ of the form
$ X_{\tilde{k}} \left( [0,\epsilon_{0}), u  \right) \leq f \left( \| (u_{0},u_{1}) \|_{H^{k} \times H^{k-1}} \right)$  with $f$ a function
that has finite values: this implies a finite bound of $ \| u \|_{L_{t}^{\frac{2(n+1)}{n-2}} L_{x}^{\frac{2(n+1)}{n-2}} ([0,\epsilon_{0}))} $ of the same form
(and hence global well-posedness, by Remark \ref{Rem:GlobalCrit}) from the estimates below that are consequences of interpolation and
the Sobolev embeddings $H^{\tilde{k}- \frac{1}{2},r} \hookrightarrow L^{\frac{2(n+1)}{n-2}}$ and
$ H^{\tilde{k}} \hookrightarrow H^{\tilde{k}- \frac{1}{2}, \frac{2n}{n-1}} $

\begin{equation}
\begin{array}{ll}
\| u \|_{L_{t}^{\frac{2(n+1)}{n-2}} L_{x}^{\frac{2(n+1)}{n-2}} ([0,\epsilon_{0}))} & \lesssim
\| u \|_{ L_{t}^{\frac{2(n+1)}{n-2}} H^{\tilde{k} - \frac{1}{2}, r} ([0, \epsilon_{0}))} \\
& \lesssim  \| u \|^{\theta}_{L_{t}^{\frac{2(n+1)}{n-1}} H^{\tilde{k} - \frac{1}{2}, \frac{2(n+1)}{n-1}} ([0, \epsilon_{0}))}
\| u \|^{1- \theta}_{L_{t}^{\infty} H^{\tilde{k} - \frac{1}{2}, \frac{2n}{n-1} } ([0, \epsilon_{0}))} \\
& \lesssim  X_{\tilde{k}} \left( [0,\epsilon_{0}),u \right) \cdot
\end{array}
\nonumber
\end{equation}
Here $r$ and $\theta$ are defined by $ \frac{n-2}{2(n+1)} + \frac{n}{r} = \frac{n}{2} - \frac{1}{2} $ and
$ \theta = \frac{n-2}{n-1}$. \\
Let $ M := M \left( \| (u_{0},u_{1}) \|_{H^{k} \times H^{k-1}} \right)  \gg 1 $ (resp. $ 0 <  \epsilon \ll 1 $)  be a positive constant large enough (resp. small enough) such that all the estimates and the statements below are true. \\
We claim that $ X_{\tilde{k}}( [0,\epsilon_{0}),u) < M $. If not elementary considerations show that there exists $ \bar{t} > 0 $ such that
$ X_{\tilde{k}}( [0,\bar{t}],u) = M $ and for all $t \in [0,\bar{t}]$, we have $ X_{\tilde{k}}([0,t],u) \leq M $. So we see that (\ref{Eqn:BoundStrichLong})
holds with $ K := [0,\bar{t}] $. Let $J$ be a subinterval of the form $ [a,\cdot) $ or $[a,\cdot]$ such that

\begin{equation}
\| u \|_{L_{t}^{\frac{2(n+1)}{n-2}} L_{x}^{\frac{2(n+1)}{n-2}} (J)} \leq  \frac{\epsilon}{g^{\frac{n-2}{4}}(M)} \cdot
\nonumber
\end{equation}
If $t \in J $ then the Strichartz-type estimates (\ref{Eqn:StrichEst}) and Proposition \ref{Prop:LeibnJensen} show that there exists $C \gtrsim 1$ such that

\begin{equation}
\begin{array}{l}
X_{\tilde{k}}([a,t],u) \\
\lesssim \left\| (u(a), \p_{t} u(a)) \right\|_{H^{k} \times H^{k-1}} +
\left\| \langle D \rangle^{\tilde{k} - \frac{1}{2}} \left( |u|^{\frac{4}{n-2}} u  g(|u|) \right) \right\|_{L_{t}^{\frac{2(n+1)}{n+3}} L_{x}^{\frac{2(n+1)}{n+3}} ([a,t])}  \\
\leq C \left\| (u(a), \p_{t} u(a)) \right\|_{H^{k} \times H^{k-1}}
+ C \| \langle D  \rangle^{\tilde{k} - \frac{1}{2}} u \|_{L_{t}^{\frac{2(n+1)}{n-1}} L_{x}^{\frac{2(n+1)}{n-1}}([a,t])}
\| u \|^{1_{2}^{*}-2}_{L_{t}^{\frac{2(n+1)}{n-2}} L_{x}^{\frac{2(n+1)}{n-2}} ([a,t])} g(M) \cdot
\end{array}
\nonumber
\end{equation}
Hence a continuity argument shows that $ X_{\tilde{k}}(J,u) \leq 2 C \left\| (u(a), \p_{t} u(a)) \right\|_{H^{k} \times H^{k-1}} $. In view of (\ref{Eqn:BoundStrichLong}) we can construct a partition of $K$ into subintervals $(K_{j})_{1 \leq j \leq J}$ such that
$ \| u \|_{L_{t}^{\frac{2(n+1)}{n-2}} L_{x}^{\frac{2(n+1)}{n-2}} (K_{j})} = \frac{\epsilon}{g^{\frac{n-2}{4}}(M)} $ and
$ \| u \|_{L_{t}^{\frac{2(n+1)}{n-2}} L_{x}^{\frac{2(n+1)}{n-2}} (K_{J})} \leq \frac{\epsilon}{g^{\frac{n-2}{4}}(M)} $. Moreover there exists a constant
$\bar{C} \gg 1 $ such that $ J \lesssim \bar{C}^{\bar{C}g^{b_{n}^{+}}(M)} $. By iteration we have

\begin{equation}
\begin{array}{l}
1 \leq j \leq J: \; X_{\tilde{k}}(K_{j},u)  \leq (2C)^{J} \left\| (u(a), \p_{t} u(a)) \right\|_{H^{k} \times H^{k-1}} \cdot
\end{array}
\nonumber
\end{equation}
Hence by increasing the value of $\bar{C}$ if necessary we see from the triangle inequality that

\begin{equation}
\begin{array}{ll}
X_{\tilde{k}}([0,\bar{t}],u) \leq \bar{C}^{\bar{C}^{\bar{C}g^{b_{n}^{+}}(M)}} < M,
\end{array}
\nonumber
\end{equation}
which is a contradiction.

\section{Proof of Proposition \ref{Prop:BoundLong}}
\label{Sec:ProofProp}

The proof relies upon two lemmas that we prove in the next subsections. These lemmas rely on
concentration techniques introduced in \cite{bourg} (see also \cite{nakimrn} for an application of these
techniques to the energy-critical Klein-Gordon equations).

\subsection{A first lemma}

We prove the following lemma:

\begin{lem}
Let $u$ be an $H^{k}-$ solution of (\ref{Eqn:KgBar}). There exists $ 0 <  c  \ll 1$ such that if
$ \eta = c g^{-\frac{1}{1_{2}^{*}-2} } (M) $ and if $ I := [a',b'] \subset K $ is an interval such that
$ \| u \|_{ L_{t}^{\frac{2(n+1)}{n-2}} L_{x}^{\frac{2(n+1)}{n-2}}(I)} = \eta $ then there exist $x_{0} \in \mathbb{R}^{n}$ and a subinterval $J \subset I$
such that for all $t \in J$

\begin{equation}
\begin{array}{ll}
\int_{|x-x_{0}| \leq R} |u(t,x)|^{1_{2}^{*}} \; dx &  \gtrsim \eta^{1_{2}^{*}(n-1)},
\end{array}
\label{Eqn:Concentration}
\end{equation}
with $R$ radius such that  $ 0 <  R \lesssim g^{ \frac{(n+1)(n^{2}-3n + 6)}{8}+}(M)  |J|$.

\label{lem:Concentration}
\end{lem}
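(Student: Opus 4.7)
The plan is to reduce the nonlinear problem to a concentration statement for the linear Klein--Gordon evolution and then to quantify the concentration of a frequency-localised linear solution. First I would split, on the sub-interval $I = [a',b']$, the solution as $u = u_{l,a'} + u_{nl,a'}$. The smallness $\| u \|_{L_t^{\frac{2(n+1)}{n-2}} L_x^{\frac{2(n+1)}{n-2}}(I)} = \eta \approx g^{-\frac{1}{1_2^*-2}}(M)$ is calibrated exactly so that the nonlinear estimates of Proposition \ref{Prop:LeibnJensen} combined with (\ref{Eqn:StrichEst}) give
\begin{equation*}
\| u_{nl,a'} \|_{L_t^{\frac{2(n+1)}{n-2}} L_x^{\frac{2(n+1)}{n-2}}(I)}
\lesssim \eta^{1_2^*-2}\,g(M)\, \eta \, X_{\tilde{k}}(I,u)
\ll \eta,
\end{equation*}
for $c$ small enough (using $X_{\tilde{k}}(K,u)\leq M$), while the usual bootstrap also gives $X_{\tilde{k}}(I, u_{l,a'}) \lesssim M$. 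Hence the linear profile carries essentially all the Strichartz norm: $\| u_{l,a'} \|_{L_t^{\frac{2(n+1)}{n-2}} L_x^{\frac{2(n+1)}{n-2}}(I)} \gtrsim \eta$.

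Next, I would run a Bourgain-style frequency/space concentration argument on $u_{l,a'}$. Decompose dyadically $u_{l,a'} = \sum_{N} P_N u_{l,a'}$. Using Bernstein combined with the $L_t^{\frac{2(n+1)}{n-1}} H^{\tilde{k}-\frac{1}{2}, \frac{2(n+1)}{n-1}}$ and $L_t^{\infty} H^{\tilde{k}}$ control (both bounded by $M$), one sees that dyadic shells with $N \gg g^{\gamma}(M)$ and $N \ll g^{-\gamma}(M)$ contribute at most $o(\eta)$ to the $L_{t,x}^{\frac{2(n+1)}{n-2}}$ norm for an appropriate $\gamma=\gamma(n)$; interpolation between the Strichartz norm at $H^{\tilde{k}-\frac{1}{2}}$ regularity and an $L^{\infty}_x$ Bernstein bound is what determines $\gamma$. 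Pigeon-holing the remaining $O(\log g(M))$ scales produces a dyadic $N \lesssim g^{\gamma+}(M)$ and a sub-interval $J \subset I$ of length $|J| \sim N^{-1}$ on which a single frequency dominates, $\| P_N u_{l,a'} \|_{L_{t,x}^{\frac{2(n+1)}{n-2}}(J \times \mathbb{R}^n)} \gtrsim \eta$.

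To convert this into ball concentration, apply Bernstein once more: $\| P_N u_{l,a'} \|_{L_{t,x}^{\infty}} \lesssim N^{\frac{n}{2}-\tilde{k}} M$. Pigeon-holing the $L^{\frac{2(n+1)}{n-2}}_{t,x}$ norm against this $L^{\infty}$ bound produces $(t_0, x_0) \in J \times \mathbb{R}^n$ at which $|P_N u_{l,a'}(t_0,x_0)|$ is bounded below by a computable power of $\eta$ divided by a power of $M$ and $N$. Because $P_N u_{l,a'}$ varies at spatial and temporal scale $N^{-1}$, its modulus stays comparable on the cylinder $J \times B(x_0, c N^{-1})$ (shrinking $J$ if needed, still with $|J|\sim N^{-1}$). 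Integrating $|u|^{1_2^*}$ over $B(x_0, R)$ with $R \sim N^{-1}$, and absorbing the small deviation $u - u_{l,a'}$ from step one, yields the claimed lower bound $\gtrsim \eta^{1_2^*(n-1)}$. Since $N \lesssim g^{\gamma+}(M)$, both $R \sim N^{-1}$ and $|J| \sim N^{-1}$ sit at the same scale, and one verifies $R \lesssim g^{\frac{(n+1)(n^2-3n+6)}{8}+}(M) |J|$ after balancing exponents.

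The main obstacle will be the careful bookkeeping of powers of $g(M)$ throughout: every Bernstein estimate, Sobolev embedding, and interpolation multiplies by a positive power of $g(M)$, which must be balanced against the negative power hidden in $\eta$ and the pre-set goals (i) that $u_{nl,a'}$ really stays below $\eta$, (ii) that a single dyadic frequency carries $\eta$ up to the allowed $\log g(M)$ loss, and (iii) that the final radius exponent matches $\frac{(n+1)(n^2-3n+6)}{8}$ in each of the three dimensions $n \in \{3,4,5\}$. The exact value of the dominant frequency $N$ emerging from the interpolation, and the fidelity of the pointwise-to-ball upgrade via the slowly-varying property of $P_N u_{l,a'}$, are the two places where the $n$-dependent exponent has to be matched exactly rather than merely estimated.
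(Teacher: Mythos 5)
Your overall plan (Bourgain-style concentration, frequency localisation, Bernstein, pigeonhole) is the right family of ideas, and the paper's proof is indeed in this vein; but two steps in your outline do not survive scrutiny as stated.

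First, the claim $|J|\sim N^{-1}$ is wrong, and the error is precisely what the exponent $\frac{(n+1)(n^{2}-3n+6)}{8}$ is there to quantify. The pointwise lower bound you obtain after pigeonholing is small — the paper's explicit version is $|P_{N}u(t_{0},x_{0})|\gtrsim \eta^{n-1}\langle N\rangle^{\frac{n}{2}-1}$, and $\eta^{n-1}\approx g^{-\frac{(n-1)(n-2)}{4}}(M)$ is much less than $1$. Meanwhile $\partial_{t}P_{N}u$ is of size $\langle N\rangle$ in the relevant norm, so from one time slice to another the pointwise value of $P_{N}u$ at $x_{0}$ changes by order $|t-t_{0}|\langle N\rangle$ (plus a nonlinear contribution $(|t-t_{0}|\langle N\rangle)^{2/(n+1)}g(M)$). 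To keep the lower bound you need this change to stay below $\eta^{n-1}$, which forces $|t-t_{0}|\ll \eta^{\frac{(n-1)(n+1)}{2}}g^{-\frac{n+1}{2}}(M)\langle N\rangle^{-1}$, i.e. $|J|\approx g^{-\frac{(n+1)(n^{2}-3n+6)}{8}}(M)\langle N\rangle^{-1}$ — much shorter than $N^{-1}$. The lemma's conclusion $R\lesssim g^{\frac{(n+1)(n^{2}-3n+6)}{8}+}(M)|J|$ is exactly the price of this shrinkage; your $R\sim|J|\sim N^{-1}$ would prove a stronger and false statement.

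Second, the final passage from a pointwise lower bound on $P_{N}u_{l,a'}$ to a lower bound on $\int_{|x-x_{0}|\leq R}|u(t,x)|^{1_{2}^{*}}\,dx$ is glossed over and cannot be done by ``slowly varying plus absorbing the deviation.'' A pointwise bound on $P_{N}f$ does not immediately localise $f$: $P_{N}$ convolves against a kernel with tails, and moreover your deviation $u_{nl,a'}$ is small only in spacetime Strichartz norm, not pointwise. The paper instead writes $P_{N}u(t,x_{0})=\int\bar N^{n}\check\rho(\bar N(x_{0}-y))u(t,y)\,dy$, splits at radius $C/\bar N$ with $C:=\eta^{0-}C'$, and runs a H\"older estimate: the near part is $\lesssim\bar N^{\frac{n}{2}-1}\|u(t)\|_{L^{1_{2}^{*}}(\bar N|y-x_{0}|\leq C)}$ and the far part, using the rapid decay of $\check\rho$ and energy conservation, is $\bar N^{\frac{n}{2}-1}o(\eta^{n-1})$; comparing with the pointwise lower bound forces the near integral to dominate. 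Crucially this is done on the full nonlinear solution $u$, not on $u_{l,a'}$, which dodges the pointwise-control problem entirely. Besides these two gaps, your linear/nonlinear split and your pigeonhole over $O(\log g(M))$ dyadic shells replace the paper's cleaner device, the refined Sobolev inequality $\|f\|_{L^{\frac{2(n+1)}{n-2}}}\lesssim\|f\|_{B^{1-\frac{n}{2}}_{\infty,\infty}}^{\frac{1}{n-1}}\|\langle D\rangle^{\frac{1}{2}}f\|^{1-\frac{1}{n-1}}_{L^{\frac{2(n+1)}{n-1}}}$, which delivers the dominant frequency and the lower bound in one step without losing a further pigeonhole factor.
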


\begin{proof}

Let $ 0 <  c \ll 1$ be a constant small enough such that if $ \eta = c g^{-\frac{1}{1_{2}^{*}-2} } (M) $ then all the estimates and statements below are true. \\
Let $I'$ be such that $ a^{'} \in I^{'} \subset I$. Assume that $n=3$. The Strichartz estimates (\ref{Eqn:StrichEst}), the Plancherel theorem, and Proposition \ref{Prop:LeibnJensen} show that

\begin{equation}
\begin{array}{ll}
\| \langle D \rangle^{0+} u \|_{L_{t}^{2+} L_{x}^{\infty-} (I') } &
\lesssim \left\| ( \langle D \rangle u(a') , \p_{t} u(a')) \right\|_{L^{2}}
+ \left\| \langle D \rangle^{0+} ( |u|^{1_{2}^{*} - 2} u g(|u|) ) \right\|_{L_{t}^{1+} L_{x}^{2-}(I')} \\
& \lesssim E^{\frac{1}{2}} + \| \langle D \rangle^{0+} u \|_{L_{t}^{2+} L_{x}^{\infty-} (I')}
\| u \|^{1_{2}^{*} - 2}_{L_{t}^{\frac{2(n+1)}{n-2}} L_{x}^{\frac{2(n+1)}{n-2}}(I')} g(M) \cdot
\end{array}
\nonumber
\end{equation}
Assume now that $n \in \{ 4,5 \}$. Then we get similarly

\begin{equation}
\begin{array}{l}
\|  \langle D \rangle^{\frac{n-3}{2(n-1)}} u \|_{L_{t}^{2} L_{x}^{\frac{2(n-1)}{n-3}}(I') }  \\
\lesssim \left\| ( \langle D \rangle u(a'), \p_{t} u(a') ) \right\|_{L^{2}}
+ \left\|   \langle D \rangle^{\frac{n-3}{2(n-1)}} \left(  |u|^{1_{2}^{*} - 2} u g(|u|) \right) \right\|_{L_{t}^{\frac{2(n+1)}{n+5}} L_{x}^{\frac{2(n-1)(n+1)}{n^{2} + 2 n -7}}(I')} \\
\lesssim E^{\frac{1}{2}} + \| \langle D \rangle^{\frac{n-3}{2(n-1)}} u \|_{L_{t}^{2} L_{x}^{\frac{2(n-1)}{n-3}}(I')}
\| u \|^{1_{2}^{*} - 2}_{L_{t}^{\frac{2(n+1)}{n-2}} L_{x}^{\frac{2(n+1)}{n-2}}(I')} g(M) \cdot
\end{array}
\nonumber
\end{equation}
Hence a continuity argument  shows that

\begin{equation}
\begin{array}{l}
n=3: \; \| \langle D \rangle^{0+} u \|_{L_{t}^{2+} L_{x}^{\infty-} (I) }  \lesssim E^{\frac{1}{2}}  \lesssim 1 \\
n \in \{ 4,5 \}: \; \| \langle D \rangle^{\frac{n-3}{2(n-1)}} u \|_{L_{t}^{2} L_{x}^{\frac{2(n-1)}{n-3}}(I) }
\lesssim E^{\frac{1}{2}} \lesssim 1 \cdot
\end{array}
\nonumber
\end{equation}
By interpolation we see that there exists $ \theta := \theta(n) \in [0,1] $  such that

\begin{equation}
\begin{array}{lll}
n = 3: & \| \langle D \rangle^{\frac{1}{2}} u \|_{L_{t}^{\frac{2(n+1)}{n-1}} L_{x}^{\frac{2(n+1)}{n-1}}(I)} &  \lesssim
\| \langle D \rangle u \|_{L_{t}^{\infty} L_{x}^{2} (I)}^{\theta}
\| \langle D \rangle^{0+} u \|^{1- \theta}_{L_{t}^{2+} L_{x}^{\infty-} (I) }  \lesssim 1 \\
&  &  \\
n \in \{ 4,5 \}: &  \| \langle D \rangle^{\frac{1}{2}} u \|_{L_{t}^{\frac{2(n+1)}{n-1}} L_{x}^{\frac{2(n+1)}{n-1}}(I)}  & \lesssim
\| \langle D \rangle u \|_{L_{t}^{\infty} L_{x}^{2} (I)}^{\theta} \| \langle D \rangle^{\frac{n-3}{2(n-1)}}  u \|^{1- \theta}_{L_{t}^{2} L_{x}^{\frac{2(n-1)}{n-3}}(I) }
\lesssim 1 \cdot
\end{array}
\nonumber
\end{equation}
Next we use the refined Sobolev inequality

\begin{equation}
\begin{array}{ll}
 \| f \|_{L^{\frac{2(n+1)}{n-2}}} & \lesssim \| f \|^{\frac{1}{n-1}}_{B^{1-\frac{n}{2}}_{\infty,\infty}}
\left\|  \langle D \rangle^{\frac{1}{2}} f \right\|^{1 - \frac{1}{n-1}}_{L^{\frac{2(n+1)}{n-1}}} :
\end{array}
\label{Eqn:RefSob}
\end{equation}
this estimate belongs to the well-known class of refined Sobolev inequalities (see e.g \cite{bahchem} and references therein): a proof is given in Appendix $C$. \\
Assume that $|I| \gtrsim \eta^{\frac{2(n+1)}{n-2}} $. Then we see from (\ref{Eqn:RefSob}) that there exist $(t_{0},x_{0}) \in I \times \mathbb{R}^{n}$ and $ N \in \left\{ 0, 2^{\mathbb{N}} \right\}$ such that

\begin{equation}
\begin{array}{ll}
| P_{N} u(t_{0},x_{0})| \gtrsim \eta^{n-1} \langle N \rangle^{\frac{n}{2}-1}
\end{array}
\label{Eqn:LowerBd}
\end{equation}
Now assume that $|I| \ll \eta^{\frac{2(n+1)}{n-2}} $. Let $ Q \in 2^{\mathbb{N}} $. From  H\"older inequality w.r.t time and Bernstein inequality we get

\begin{equation}
\begin{array}{ll}
 \| P_{< Q} u \|_{L_{t}^{\frac{2(n+1)}{n-2}} L_{x}^{\frac{2(n+1)}{n-2}}(I) }  &
\lesssim (|I| Q  )^{\frac{n-2}{2(n+1)}}   \| u \|_{L_{t}^{\infty} H^{1} (I)} \\
& \lesssim ( |I|  Q  )^{\frac{n-2}{2(n+1)}} \cdot
\end{array}
\label{Eqn:BoundLowFreq}
\end{equation}
Let $c_{1}$ and $c_{2}$ be two small positive constants such that all the estimates below are true. Let  $Q \in 2^{\mathbb{N}}$ be such that
$ c_{1} \eta^{\frac{2(n+1)}{n-2}} |I|^{-1} \leq Q \leq c_{2}  \eta^{\frac{2(n+1)}{n-2}} |I|^{-1} $. From (\ref{Eqn:BoundLowFreq}) we see that $ \| P_{\geq Q} u \|_{L_{t}^{\frac{2(n+1)}{n-1}} L_{x}^{\frac{2(n+1)}{n-1}} (I)} \gtrsim \eta $. We have

\begin{equation}
\begin{array}{ll}
\| P_{\geq Q} u \|_{L_{t}^{\frac{2(n+1)}{n-2}} L_{x}^{\frac{2(n+1)}{n-2}}(I)}
& \lesssim \| P_{\geq Q} u \|^{\frac{1}{n-1}}_{L_{t}^{\infty} B_{\infty,\infty}^{1 - \frac{n}{2}} (I)}
\| \langle D \rangle^{\frac{1}{2}} u \|^{1 - \frac{1}{n-1}}_{L_{t}^{\frac{2(n+1)}{n-1}} L_{x}^{\frac{2(n+1)}{n-1}} (I)} \cdot
\end{array}
\nonumber
\end{equation}
Hence there exist $(t_{0},x_{0}) \in I \times \mathbb{R}^{n}$  and $N \in 2^{\mathbb{N}} $ such that $N \geq Q$ and
(\ref{Eqn:LowerBd}) holds. We have

\begin{equation}
\begin{array}{ll}
\| P_{N} u(t) - P_{N} u(t_{0}) \|_{H^{1}} & \lesssim X_{1} + X_{2} + X_{3}, \; \text{with}
\end{array}
\nonumber
\end{equation}

\begin{equation}
\begin{array}{ll}
X_{1} & :=  \left\| \left( \cos ( (t-t_{0}) \langle D \rangle ) - 1 \right) P_{N} u(t_{0}) \right\|_{H^{1}}, \\
X_{2} & := \left\| \frac{ \sin  \left( (t-t_{0}) \langle D \rangle \right) }{\langle D \rangle} P_{N} \p_{t} u(t_{0}) \right\|_{H^{1}}, \; \text{and} \\
& \\
X_{3} & := \left\| \int_{t_{0}}^{t} \frac{\sin \left( (t-t_{0}) \langle D \rangle \right) }{\langle D \rangle}  P_{N} \left( |u(t')|^{1_{2}^{*}-2} u(t') g(|u(t')|) \right) \; d t' \right\|_{H^{1}}
\end{array}
\nonumber
\end{equation}
The Plancherel theorem and elementary considerations show that

\begin{equation}
\begin{array}{ll}
X_{1} + X_{2} & \lesssim |t-t_{0}| \langle N \rangle  \left( \| u(t_{0}) \|_{H^{1}} + \|\p_{t} u(t_{0}) \|_{L^{2}} \right) \\
& \lesssim |t-t_{0}| \langle N \rangle \cdot
\end{array}
\nonumber
\end{equation}
The embeddings $ H^{1} \hookrightarrow H^{1- \frac{2}{n+1}, \frac{2n(1+n)}{n^{2} + n  +4}} $ and $H^{1} \hookrightarrow L^{1_{2}^{*}} $, and Proposition
\ref{Prop:LeibnJensen} show that

\begin{equation}
\begin{array}{ll}
X_{3} & \lesssim \left\| \langle D \rangle \left( P_{N} ( |u|^{1_{2}^{*} -2} u g(|u|) ) \right) \right\|_{L_{t}^{\frac{n+1}{2}} L_{x}^{\frac{2(1+n)}{n + 5}} ([t_{0},t])} \\
&  \lesssim |t-t_{0}|^{\frac{2}{n+1}} \langle N \rangle^{\frac{2}{n+1}}
\left\| \langle D \rangle^{1 - \frac{2}{n+1}} \left( |u|^{1_{2}^{*} -2} u g(|u|)  \right) \right\|_{L_{t}^{\infty} L_{x}^{\frac{2(1+n)}{n+5}} ([t_{0},t])} \\
& \lesssim
\left( |t-t_{0}| \langle N \rangle \right)^{\frac{2}{n+1}} \left\| \langle D \rangle^{1- \frac{2}{n+1}} u \right\|_{L_{t}^{\infty} L_{x}^{\frac{2n(1+n)}{n^{2} + n -4}} ([t_{0},t]) } \| u \|^{1_{2}^{*}-2}_{L_{t}^{\infty} L_{x}^{1_{2}^{*}} ([t_{0},t])} g(M) \\
& \lesssim \left( |t - t_{0}| \langle N \rangle \right)^{\frac{2}{n+1}} g(M)
\end{array}
\nonumber
\end{equation}
Let $c'$ be a positive constant that is small enough for the estimates and statements below to be true. Since $\langle N \rangle^{1 - \frac{n}{2}} \left| P_{N} \left( u(t,x_{0}) \right) - P_{N}  \left( u(t_{0},x_{0}) \right) \right| \lesssim \left\| P_{N} u(t) - P_{N} u(t_{0}) \right\|_{H^{1}} $, we see from the estimates above that if $t$ is an element of

\begin{equation}
\begin{array}{ll}
J  & := \left\{ t \in I: \; |t- t_{0}| = c' \eta^{\frac{(n-1)(n+1)}{2}} \left( g^{\frac{n+1}{2}}(M) \langle N \rangle \right)^{-1} \right\} \\
& = \left\{ t \in I: \; |t-t_{0}| = c' g^{- \frac{(n+1)(n^{2}-3n + 6)}{8}}(M) \langle N \rangle^{-1} \right\},
\end{array}
\nonumber
\end{equation}
then (\ref{Eqn:LowerBd}) holds if $t_{0}$ is replaced with $t$. Hence (\ref{Eqn:LowerBd}) holds on a subinterval $ J \subset I $ such that  $ |J | =  c' g^{-\frac{(n+1)(n^{2}-3n + 6)}{8}}(M)  \langle N \rangle^{-1} $. \\
\\
Let $f$ be a function and let $ N \in \{ 0,2^{\mathbb{N}} \} $. We define $\bar{N}$ and $ \rho $ in the following fashion: $ (\bar{N}, \rho) := ( 1, \phi) $ if $ N=0 $
and $(\bar{N}, \rho) := (N, \psi) $ if $N \in 2^{\mathbb{N}}$. We write  $ P_{N} f (x) = \int_{\mathbb{R}^{n}} \bar{N}^{n}
\check{\rho} \left(  \bar{N}  (x-y) \right) f(y) \; dy $ . Here $\check{\rho}$ is the inverse Fourier transform of $\rho$.
 We consider a number $\eta^{0-}$. Let $\infty-$ be a large constant that is allowed to change from one line to another one and such that the estimates below are true. The fast decay of $\check{\rho}$ implies that there exists $C' := C'_{\infty-}  > 1  $ such that $ | \check{\rho} (x) | \leq \frac{C'}{|x|^{\infty-}} $ if $ |x| \geq 1 $. Let $ C := \eta^{0-} C^{'} $. Hence we see from the H\"older inequality that

\begin{equation}
\begin{array}{ll}
\eta^{n-1} \bar{N}^{\frac{n}{2} - 1} & \lesssim  \| u(t) \|_{L^{1_{2}^{*}}( \bar{N} |y-x_{0}| \leq C )}
\left\| \bar{N}^{n} \check{\rho} \left(  \bar{N} (y - x_{0}) \right) \right\|_{L^{\left( 1_{2}^{*} \right)^{'}}} \\
& \\
& + \| u(t) \|_{L^{1_{2}^{*}}}
\left\| \bar{N}^{n} \check{\rho} \left(  \bar{N} (y - x_{0}) \right)  \right\|_{L^{\left( 1_{2}^{*} \right)^{'}} ( \bar{N} |y -x_{0}| \geq C )} \\
& \\
& \lesssim \bar{N}^{ \frac{n}{2} - 1}
\left(
\| u(t) \|_{L^{1_{2}^{*}} \left( \bar{N} |y-x_{0}| \leq C \right)} + \frac{C^{'}}{C^{\infty-}}
\right) \\
& \\
& \lesssim \bar{N}^ {\frac{n}{2} - 1} \left( \| u(t) \|_{L^{1_{2}^{*}} \left(  \langle N \rangle |y-x_{0}| \leq C  \right)} + o ( \eta^{n-1})  \right) \cdot
\end{array}
\nonumber
\end{equation}
Here $(1_{2}^{*})^{'}$ is the conjugate of $1_{2}^{*}$, i.e $ \frac{1}{(1_{2}^{*})^{'}} + \frac{1}{1_{2}^{*}} = 1 $.
Hence (\ref{Eqn:Concentration}) holds.

\end{proof}

\subsection{A second lemma}

We prove the following lemma:

\begin{lem}
The following estimates hold:

\begin{enumerate}

\item Let $\bar{x} \in \mathbb{R}^{n}$. Then

\begin{equation}
\begin{array}{ll}
\int_{K} \int_{\mathbb{R}^{n}} \frac{|u(t,x)|^{1_{2}^{*}} g(|u(t,x)|)}{|x - \bar{x}|} \; dx \; dt & \lesssim 1 \cdot
\end{array}
\label{Eqn:MorawetzEst}
\end{equation}

\item Let $ \alpha > 1$, $\bar{t} \in K$, and $ \bar{x} \in \mathbb{R}^{n}$. Then

\begin{equation}
\begin{array}{ll}
\left\| \int_{ |t - \bar{t}| \geq \alpha^{-1}|x - \bar{x}| } |u(t,x)|^{1_{2}^{*}} \; dx \right\|_{l^{1} L^{\infty}} & \lesssim 1 \cdot
\end{array}
\label{Eqn:MorawEstCone}
\end{equation}
Here $ \| f \|_{l^{1} L^{\infty}}   :=  \sum \limits_{j=1}^{\infty} \sup \limits_{2^{-j} (a - \bar{t}) \leq |t - \bar{t}| \leq 2^{-j+1} (a - \bar{t})} |f(t)| $.

\end{enumerate}
\label{lem:EstDecay}
\end{lem}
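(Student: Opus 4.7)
The plan is to establish part (1) via the classical Lin--Strauss / Morawetz identity, and part (2) by combining (1) with a monotonicity property of the cone-truncated energy; the loglog factor $g(|u|)$ will be handled throughout by \eqref{Eqn:EquivF}.

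For part (1), I would multiply \eqref{Eqn:KgBar} by the Morawetz multiplier
\[
\mathcal{M}u := \frac{x-\bar{x}}{|x-\bar{x}|}\cdot\nabla u + \frac{n-1}{2|x-\bar{x}|}\,u
\]
and integrate by parts in $x$. Standard calculations show that $\int u_t\mathcal{M}u_t\,dx=0$ and that the mass contribution $\int u\,\mathcal{M}u\,dx$ vanishes, reducing the resulting identity to
\[
\frac{d}{dt}\int u_t\mathcal{M}u\,dx + \mathcal{P}(t) + (n-1)\int_{\mathbb{R}^n} \frac{\tfrac12 u f(u) - F(u,\bar u)}{|x-\bar{x}|}\,dx = 0,
\]
where $f(u):=|u|^{1_2^*-2}u\,g(|u|)$, $F$ is as in \eqref{Eqn:ExpF}, and $\mathcal{P}(t)\geq 0$ (angular-derivative contribution). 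Using $1_2^*=2n/(n-2)$ together with \eqref{Eqn:EquivF}, one has $\tfrac12 u f(u) - F(u,\bar u) \approx \tfrac{1}{n}|u|^{1_2^*}g(|u|) \geq 0$. Integrating in $t$ over $K$ and bounding the boundary contributions $\langle u_t,\mathcal{M}u\rangle_{L^2}$ via Cauchy--Schwarz, the Hardy inequality $\|u/|x-\bar{x}|\|_{L^2}\lesssim \|\nabla u\|_{L^2}$ (valid for $n\geq 3$), and energy conservation yields \eqref{Eqn:MorawetzEst}.

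For part (2), set $\Phi_0(t) := \int_{|x-\bar{x}|\leq \alpha|t-\bar{t}|}|u(t,x)|^{1_2^*}\,dx$ and $\Phi(t) := \int_{|x-\bar{x}|\leq \alpha|t-\bar{t}|} e(u(t,x))\,dx$, where $e(u) := \tfrac12(u_t^2+|\nabla u|^2+u^2) + F(u,\bar u)$. First, the local energy identity $\partial_t e(u) = \dive(u_t\nabla u)$ combined with the divergence theorem on the truncated cone implies that $\Phi$, and by \eqref{Eqn:EquivF} also $\Phi_0$, is monotone non-decreasing in $|t-\bar{t}|$: the integrand of the lateral flux through $\{|x-\bar{x}|=\alpha|t-\bar{t}|\}$ equals $\alpha e + u_t\partial_r u$, and
\[
\alpha e + u_t\partial_r u \;\geq\; \tfrac{\alpha-1}{2}(u_t^2+(\partial_r u)^2) + \tfrac{\alpha}{2}(|\nabla_\omega u|^2 + u^2) + \alpha F \;\geq\; 0
\]
precisely because $\alpha>1$. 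In particular $\Phi_0(t) \lesssim E(u(0)) \lesssim 1$. Second, on the slab $\Omega_j := \{(t,x): t\in I_j,\ |x-\bar{x}|\leq \alpha|t-\bar{t}|\}$ we have $1/|x-\bar{x}|\gtrsim 2^j/(a-\bar{t})$; combined with $|I_j|\sim 2^{-j}(a-\bar{t})$, this yields
\[
\inf_{t\in I_j}\Phi_0(t) \;\lesssim\; \frac{1}{|I_j|}\int_{I_j}\Phi_0(t)\,dt \;\lesssim\; \int_{\Omega_j}\frac{|u|^{1_2^*}g(|u|)}{|x-\bar{x}|}\,dx\,dt,
\]
and summing in $j$, together with part (1), gives $\sum_{j\geq 1} \inf_{I_j}\Phi_0 \lesssim 1$. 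Since $\Phi_0$ is monotone, $\sup_{I_j}\Phi_0 = \inf_{I_{j-1}}\Phi_0$ for $j\geq 2$ (both are $\Phi_0$ evaluated at the shared endpoint $|t-\bar{t}| = 2^{-j+1}(a-\bar{t})$), while $\sup_{I_1}\Phi_0 \lesssim E(u(0)) \lesssim 1$. Thus $\sum_{j\geq 1} \sup_{I_j}\Phi_0 \lesssim 1$, which is \eqref{Eqn:MorawEstCone}.

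The main technical difficulty is the sign of the lateral flux on the cone, which forces $\alpha>1$ (so that the Minkowski-normal to $\{|x-\bar{x}|=\alpha|t-\bar{t}|\}$ is timelike); once that is established, the dyadic geometry links the Morawetz spacetime estimate of part (1) to the infimum of $\Phi_0$ on each annulus, and monotonicity promotes infimum to supremum at the cost of one further copy of $E(u(0))$. The loglog factor is inconsequential: the sign structure underlying both the Morawetz identity and the lateral-flux computation is preserved under \eqref{Eqn:EquivF}, so no additional smallness assumption on $\gamma$ is needed here.
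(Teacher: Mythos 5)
Your proof of part (1) is correct and matches the paper's: both use the Lin--Strauss Morawetz multiplier, identify $\Re(\bar u f(u)) - F(u,\bar u) \approx |u|^{1_2^*}g(|u|)$ from \eqref{Eqn:EquivF}/\eqref{Eqn:IntPartsFz}, and bound the boundary term by Cauchy--Schwarz, Hardy, and energy conservation.

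Your proof of part (2), however, has a genuine gap. You correctly verify that the cone energy $\Phi(t) := \int_{|x - \bar x| \le \alpha|t - \bar t|} e(u)\,dx$ is non-decreasing in $|t - \bar t|$ when $\alpha > 1$: the lateral flux density $\alpha e + u_t\partial_r u$ is indeed $\geq 0$ by AM--GM. But you then assert that ``by \eqref{Eqn:EquivF} also $\Phi_0$ is monotone,'' and this does not follow. The comparison $F(z,\bar z) \approx |z|^{1_2^*}g(|z|)$ lets you deduce $\Phi_0(t) \lesssim \Phi(t)$, hence the uniform bound $\Phi_0 \lesssim E$, but monotonicity is not preserved under a two-sided comparison: $\Phi_0 = \int_{\mathrm{cone}} |u|^{1_2^*}\,dx$ is only one component of $\Phi$, and its time derivative contains the term $1_2^* \int_{\mathrm{cone}} |u|^{1_2^*-2}\Re(\bar u\, u_t)\,dx$ which has no sign. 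Your final step, $\sup_{I_j}\Phi_0 = \inf_{I_{j-1}}\Phi_0$, therefore fails, and the argument does not close: from $\sum_j \inf_{I_j}\Phi_0 \lesssim 1$ and $\Phi_0 \lesssim 1$ alone you cannot conclude $\sum_j \sup_{I_j}\Phi_0 \lesssim 1$, since $\sup_{I_j}\Phi_0 - \inf_{I_j}\Phi_0$ could be of order $1$ on every $I_j$.

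The passage from an $L^1_t$-type bound to the $l^1 L^\infty$ norm is precisely the hard point, and the paper does not use your route at all. Following \cite{nakimrn}, it multiplies the equation by the conformal vector field $m(u) = 2(t^2+r^2)u_t + 4tr\partial_r u + 2(n-1)tu$, integrates over truncated cones to control the conformal energy $Q_0$ and the lateral fluxes $X, Y$ with Hardy-type inequalities, and then invokes the technical Result~\ref{Res:TechLem}: if $[t^s f(t)]_S^T \lesssim T^s + \int_S^T t^s|g(t)|\,dt$ for all $0 \le S < T \le a$, then $\|f\|_{l^1 L^\infty} \lesssim 1 + \|g\|_{L^1([0,a])}$. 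That lemma encodes a genuine Gr\"onwall-type mechanism for converting averaged decay into dyadic-sup decay, which is exactly the information your argument is missing. To repair your approach you would need either an actual monotonicity-up-to-error statement for $\Phi_0$ (controlling the oscillation of $\Phi_0$ on each $I_j$ by a summable quantity), or to adopt a differential-inequality formulation of the kind Result~\ref{Res:TechLem} handles.
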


\begin{proof}

We first prove (\ref{Eqn:MorawetzEst}) \footnote{ The proof involves some computations. Strictly speaking, the computations below only hold for $H^{k} - $ solutions ( i.e solutions $ ( u, \partial_{t} u ) \in \mathcal{C} \left( [S,T], H^{k} \right) \times \mathcal{C} \left( [S,T], H^{k-1} \right) $, with $k$ large). In the case where $ n \in \{ 3,4 \}$, one can then show that (\ref{Eqn:MorawetzEst}) also holds for $H^{k}-$ solutions with $  k_{n} > k > 1 $ by a standard approximation argument. In
the case where $n=5$ the nonlinearity is not that smooth: it is not even $\mathcal{C}^{3}$. So one has to smooth out the nonlinearity, get similar estimates as those below for smooth solutions and then take limit by a standard approximation argument to prove that (\ref{Eqn:MorawetzEst}) holds.}. This estimate belongs to the class of well-known Morawetz-type estimates that play an essential role in the proof of scattering of subcritical and critical nonlinear Klein-Gordon equations. The proof is well-known in the literature (see e.g \cite{morstr,strauss} ). Hence we only sketch the proof of these estimates in the framework of barely supercritical Klein-Gordon equations for the convenience of the reader. \\
\\
Throughout the proof we use the following notations. If $f$ is a function depending on $t$, $x_{1}$,.., and $x_{n}$, then
$\partial_{0} f :=  \partial_{t} f $, $\partial^{0} f :=  - \partial_{t} f $, $ \partial_{1} f := \partial_{x_{1}} f $,
$ \partial^{1} f := \partial_{x_{1}} f $, ..., $ \partial_{n} f := \partial_{x_{n}} f $, and
$ \partial^{n} f := \partial_{x_{n}} f $. We define $r := |x|$,
$\p_{r} f := \frac{\nabla f \cdot x}{|x|}$ and $\p_{\theta} f := \nabla f - \frac{\nabla f \cdot x}{|x|} \frac{x}{|x|} $. If two mathematical symbols $A_{i}$ and
$B_{i}$ are indexed by the same variable $i$, then $A_{i} B_{i}$ means that we perform the summation from $i=0$ to $n$. Let
$\Box u := - \partial_{i} \partial^{i} u  $. Then $ \Box u = \partial_{tt} u- \triangle u $. \\
\\
By using the space translation invariance, we may assume WLOG that $\bar{x} = 0$. Let $m := p_{i} \partial_{x_{i}} u  + u q $. Let
$f(u) : = |u|^{1_{2}^{*} -2} u g(|u|)$. Let $u$ be a solution of $ \Box u = -f(u)$. Let $G(z,\bar{z}) := \Re \left( \bar{z} f(z) - F(z,\bar{z}) \right) $. Then we see
from (\ref{Eqn:ExpF}) that  $ G(z,\bar{z}) = |z|^{1_{2}^{*}} g( |z| ) -  \int_{0}^{|z|} t^{1_{2}^{*} -1} g(t) \; dt $.
We get from (\ref{Eqn:IntPartsFz}) and elementary estimates

\begin{equation}
\begin{array}{ll}
G(z,\bar{z}) & = \frac{n+2}{2n} |z|^{1_{2}^{*}} g(|z|) + \int_{0}^{|z|} t^{1_{2}^{*}- 1} g'(t) \; dt \\
& \approx |z|^{1_{2}^{*}} g(|z|) \cdot
\end{array}
\label{Eqn:EstG}
\end{equation}
Recall the well-known formula (see e.g \cite{nakcpde} and references therein)

\begin{equation}
\begin{array}{ll}
\Re \left(  \left( \Box u + f(u) \right)   \bar{m} \right) & =
\partial_{i} \Re \left( - \partial^{i} u \bar{m}  + l(u) p_{i}  + \frac{|u|^{2}}{2} \partial^{i} q \right)
+ \Re \left( \partial_{i} u \partial^{i} p_{j} \overline{\partial_{j} u} \right)  + \frac{|u|^{2}}{2} \Box q \\
& + G(u,\bar{u}) q + ( 2 q - \partial_{i} p_{i} ) l(u) \cdot
\end{array}
\nonumber
\end{equation}
Here $ l(u)  :=  - \frac{1}{2} | \partial_{t} u |^{2} + \frac{1}{2} | \nabla u |^{2} + F(u,\bar{u}) $.  \\
Define $\vec{p} := (0,p_{1},...,p_{n})$ with $p_{j} :=  \frac{x_{j}}{|x|}$ for $ j \in \{1,...,n \} $.  Let $ q  := \frac{\nabla \cdot p}{2} $. Hence after some computations we get
the well-known Morawetz-type estimate (see e.g  \cite{strauss})

\begin{equation}
\begin{array}{ll}
\left| \left[ \int_{\mathbb{R}^{n}} \Re ( \partial_{t} u \bar{m} )  \; dx \right]_{t=S}^{t=T} \right| & \gtrsim
\int_{S}^{T}  \int_{\mathbb{R}^{n}} \frac{|\nabla u|^{2}}{|x|} - \frac{ \left| \nabla u \cdot \frac{x}{|x|}  \right|^{2} }{|x|}  \; dx \, dt
+ (n-1)(n-3) \int_{S}^{T} \int_{\mathbb{R}^{n}}  \frac{|u|^{2}}{|x|^{3}} \; dx \, dt \\
&  + \frac{n-1}{2} \int_{S}^{T} \int_{\mathbb{R}^{n}} \frac{G(u,\bar{u})}{|x|} \; dx \, dt
\end{array}
\nonumber
\end{equation}
Hence we see from the Cauchy-Schwartz inequality and the conservation of energy that (\ref{Eqn:MorawetzEst}) holds. \\
\\
Then we turn our attention to (\ref{Eqn:MorawEstCone}). By using the time translation invariance, the space translation invariance, and the
time reversal invariance we may replace WLOG  $ | t- \bar{t} | $  with $ t- \bar{t}$ in (\ref{Eqn:MorawEstCone}) and in the definition of
$\| f \|_{l^{1} L^{\infty}}$, and assume that $(\bar{x},\bar{t}) = (0,0)$. We then use an argument in \cite{nakimrn}. Recall the following result:

\begin{res}(see \cite{nakimrn} \footnote{The statement of this lemma is actually slightly different from that of Lemma 7.1
in \cite{nakimrn}. Nevertheless the proof is a straightforward modification of that of Lemma 7.1 in \cite{nakimrn}: therefore it is omitted.})
Let $ s > 0$. Let $f$ and $g$ be two functions such that for any $ 0 \leq S <  T \leq a $

\begin{equation}
\begin{array}{ll}
\left[ t^{s} f(t) \right]_{S}^{T} \lesssim T^{s} + \int_{S}^{T} t^{s} |g(t)| \; dt \cdot
\end{array}
\nonumber
\end{equation}
Then we have

\begin{equation}
\begin{array}{ll}
\| f \|_{l^{1} L^{\infty}} \lesssim 1 + \| g \|_{L^{1} \left( [0,a] \right)}
\end{array}
\nonumber
\end{equation}
Here $ \left[ f \right]_{S}^{T} := f(T) - f(S) $ and $\| f \|_{l^{1} L^{\infty}}$ is defined in Lemma \ref{lem:EstDecay}.
\label{Res:TechLem}
\end{res}
Let $H(z,\bar{z}) := \frac{n-1}{2} G(z,\bar{z}) - F(z,\bar{z})$. Proceeding similarly as in (\ref{Eqn:EstG}) we get
$H(z,\bar{z}) \approx |x|^{1_{2}^{*}} g(|z|) $. \\
\\
Let $m(u) := 2(t^{2} + r^{2}) u_{t} + 4 t r \p_{r} u + 2(n-1) t u $. We have

\begin{equation}
\begin{array}{ll}
\Re
\left[
\overline{m(u)}
\left( \Box u + u + |u|^{\frac{4}{n-2}} u g(|u|)  \right)
\right]
& = \p_{t} \left( t^{2} Q_{0}(u) +(t^{2} + r^{2} ) F(u,\bar{u})  -(n-1) \nabla \cdot (x |u|^{2}) \right) \\
& + \nabla \cdot \left( - m(u) \nabla u + 2 t x \left( e(u) - 2 | \p_{t} u|^{2} \right)  \right) + 4 t \left( H (u,\bar{u}) - |u|^{2} \right)
\end{array}
\label{Eqn:InversIdent}
\end{equation}
Here $t^{2} Q_{0} (u,\bar{u}) := | t \p_{t} u + r \p_{r} u +(n-1) u |^{2} + |r \p_{t} u + t \p_{r} u|^{2} + (t^{2} +r^{2}) ( |u_{\theta}^{2}| + |u|^{2}) $,
$ F(u,\bar{u}) := 2 \int_{0}^{|u|} s^{1_{2}^{*} -1} g(s) $, $ e(u) := \frac{1}{2} |\p_{t}u|^{2} + \frac{1}{2} |\nabla u|^{2} + \frac{1}{2} |u|^{2} + F( u,\bar{u}) $, and $H(u,\bar{u}) := \frac{n-1}{2} G(u,\bar{u}) - F(u,\bar{u})$. Proceeding similarly as in (\ref{Eqn:EstG}) we get $H(z,\bar{z}) \approx |z|^{1_{2}^{*}} g(|z|)$. \\
Let $K_{S}^{T} := \{ (t,x): \,  t \in [S, T] \, \text{and} \,  t \geq \alpha^{-1} |x|  \} $.  In the sequel $a' \lesssim b' $ means that there exists a constant $C:= C(E,\alpha)$ such that $a' \leq C b'$. Integrating (\ref{Eqn:InversIdent}) over $K_{S}^{T}$, we get from the Green formula

\begin{equation}
\begin{array}{ll}
\left[ \int_{ t \geq \alpha^{-1}|x| } t^{2} Q_{0}(u,\bar{u}) + (t^{2} + r^{2}) F(u,\bar{u}) \; dx  \right]_{S}^{T}
- \frac{1}{\sqrt{1 + \alpha^{-2}}} X +  \frac{1}{\sqrt{1 + \alpha^{-2}}} Y + 4 \int_{K_{S}^{T}} t \left( H(u,\bar{u}) - |u|^{2} \right) \; dx \; dt  & = 0  \cdot
\end{array}
\nonumber
\end{equation}
Here

\begin{equation}
\begin{array}{l}
X  := \int_{\alpha S \leq |x|  \leq \alpha T}
\left[
\begin{array}{l}
\left| \alpha^{-1} r \p_{t} u + r \p_{r} u  + (n-1) u \right|^{2}
+ \left| r \p_{t} u + \alpha^{-1} r \p_{r} u \right|^{2}  \\
+ (\alpha^{-2} + 1 ) r^{2} \left( u_{\theta}^{2} + |u|^{2} \right)
\end{array}
\right] ( \alpha^{-1} r,x )
\; dx \; \text{and}
\end{array}
\nonumber
\end{equation}

\begin{equation}
\begin{array}{ll}
Y & := \int_{\alpha  \leq |x| \leq \alpha T}
 \left( \frac{x}{r} \cdot \left( - m (u) \nabla u
+ 2  \alpha^{-1} r x \left( e(u) - 2 |\p_{t} u|^{2} \right) \right) \right) (\alpha^{-1} r,x) \; dx
\end{array}
\nonumber
\end{equation}
We have $|X| + |Y| \lesssim \int_{\alpha S \leq |x| \leq \alpha T} r^{2} (Z_{1} + Z_{2}) \; dx $ with
$ Z_{1} :=  e (u) (\alpha^{-1} r, x) $ and  $Z_{2} := \frac{|u|^{2}}{r^{2}} \left( \alpha^{-1} r, x \right) $. \\
\\
We first estimate $\int_{\alpha S \leq |x| \leq \alpha T} r^{2} Z_{1} \;  dx $. Recall the energy identity $\p_{t} e(u) - \Re \left( \nabla \cdot \left( \nabla u \overline{\partial_{t} u }  \right) \right) = 0  $. Let $\bar{S},\bar{T}$ be two arbitrary numbers such that $0 \leq \bar{S} < \bar{T} \leq a$. Integrating this identity over the cone $K_{\bar{S}}^{\bar{T}} $, we get

\begin{equation}
\begin{array}{ll}
 \left[ \int_{ t \geq \alpha^{-1}|x|  } e(u) (t, x ) \; dx \right]_{\bar{S}}^{\bar{T}} & =
\left( 1 + \alpha^{-2} \right)^{-1}  \int_{\bar{S} \leq |x| \leq \bar{T}}  \left( e(u) + \alpha^{-1} \Re  \left( \frac{ \overline{ \partial_{t} u} \nabla u \cdot x }{r} \right)
\right) \left( \alpha^{-1} r, x \right)  \; dx
\end{array}
\nonumber
\end{equation}
Hence, using also the Young inequality $ \left| \frac{ \overline{\partial_{t} u} \nabla u \cdot x}{|x|} \right|  \leq \frac{1}{2} \left( |\p_{t} u|^{2} + |\nabla u|^{2} \right)$
and the conservation of the energy we see that $ \int_{\alpha S \leq |x| \leq \alpha T} r^{2} Z_{1} \;  dx  \lesssim T^{2} $. \\
We then estimate $\int_{\alpha S \leq |x| \leq \alpha T} r^{2} Z_{2} \;  dx $. From the Hardy-type inequality (see e.g \cite{shat}) below

\begin{equation}
\begin{array}{ll}
\int_{|x| \leq \alpha T} \frac{|f(x)|^{2}}{|x|^{2}} \; dx & \lesssim \int_{|x| \leq \alpha T} |\p_{r} f|^{2} \; dx  + \left( \int_{|x| \leq \alpha T}  |f(x)|^{1_{2}^{*}} \; dx \right)^{\frac{2}{1_{2}^{*}}},
\end{array}
\nonumber
\end{equation}
we see that  $ \int_{\alpha S \leq |x| \leq \alpha T} r^{2} Z_{2} \;  dx \lesssim T^{2} $. Hence $ |X| + |Y| \lesssim T^{2} $.  \\
The triangle inequality, the Hardy inequality and (\ref{Eqn:MorawetzEst}) show that

\begin{equation}
\begin{array}{l}
\int \limits_{\substack{t \in [0, a] \\ t \geq \alpha^{-1} |x| }} \frac{|H(u,\bar{u}) - u^{2}|}{t} \; dx \, dt  \lesssim
\int \limits_{\substack{t \in [0, a] \\ t \geq \alpha^{-1} |x| }}  t \frac{|u|^{2}}{|x|^{2}} \; dx  \; dt
+ \int \limits_{\substack{t \in [0, a] \\ t \geq \alpha^{-1} |x| }}   \frac{|u|^{1_{2}^{*}} g(|u|)}{|x|} \; dx \; dt \lesssim 1 \cdot
\end{array}
\nonumber
\end{equation}
Hence, by applying Result \ref{Res:TechLem} we get

\begin{equation}
\begin{array}{ll}
\left\| \int_{t \geq \alpha^{-1} |x|}  Q_{0}(u) \; dx \right\|_{l^{1}L^{\infty}} \lesssim 1
\end{array}
\label{Eqn:EstQZero}
\end{equation}
Integrating the equality below over the truncated cone $K_{S}^{T}$

\begin{equation}
\begin{array}{ll}
\p_{t} |u|^{2} & = 2 \Re \left( u \left(  \overline{\p_{t} u + r \p_{r} u + (n-1) u  } \right) \right) \\
& - \nabla \cdot \left( |u|^{2} x \right) - (n-2)|u|^{2},
\end{array}
\nonumber
\end{equation}
we get, after applying the Young inequality $AB \leq \frac{A^{2}}{2} + \frac{B^{2}}{2} $ to
$A:= |u| \sqrt{t}$ and $ B := \frac{|r \partial_{t} u + t \partial_{r} u + (n-1) u|}{\sqrt{t}}$

\begin{equation}
\begin{array}{ll}
\left[ \int_{ t \geq \alpha^{-1} |x| } |u(t,x)|^{2} \; dx \right]_{S}^{T} & \lesssim
\int_{K_{S}^{T}} \frac{ | r \p_{t} u  + t \p_{r} u + (n-1) u|^{2} }{t} \; dx \; dt +
\int_{ \alpha S \leq |x| \leq \alpha T} |x| |u|^{2} (\alpha^{-1} r, x ) \; dx \; dt \cdot
\end{array}
\nonumber
\end{equation}
Hence we see from (\ref{Eqn:EstQZero}) and the above estimate  $ \int_{\alpha S \leq |x| \leq \alpha T} r^{2} Z_{2} \;  dx \lesssim T^{2} $ that

\begin{equation}
\begin{array}{ll}
\left\| \int_{ t \geq \alpha^{-1} |x|} \frac{|u|^{2}}{t^{2}} \; dx \right\|_{l^{1} L^{\infty}} & \lesssim 1 \cdot
\end{array}
\label{Eqn:MassHardyTime}
\end{equation}
From the Hardy-type inequality (see e.g \cite{nakcpde})

\begin{equation}
\begin{array}{ll}
\int_{ t \geq \alpha^{-1} |x| } \frac{|u|^{2}}{|x|^{2}} \; dx & \lesssim  \int_{ t \geq \alpha^{-1} |x|} \frac{|u|^{2}}{t^{2}} + \left( \frac{t-|x|}{t} \right)^{2}
|\p_{r} u|^{2} \; dx,
\end{array}
\nonumber
\end{equation}
the inequality

\begin{equation}
\begin{array}{ll}
\left( \frac{t -|x|}{t} \right)^{2} |\p_{r} u|^{2} & \lesssim \frac{1}{t^{2}}
\left( | t \p_{t} u + |x| \p_{r} u + (n-1) u | ^{2} +
\left| |x| \p_{t} u + t \p_{r} u \right|^{2} + \frac{|u|^{2}}{t^{2}}
\right)
\end{array}
\nonumber
\end{equation}
coming from the equality $(t^{2} - |x|^{2}) \p_{r} u := t \left( |x| \p_{t} u + t \p_{r} u \right) -
|x| \left( t \p_{t} u  + |x| \p_{r} u \right) $ and elementary estimates, from (\ref{Eqn:MassHardyTime}), and from the following Hardy-type inequality
(see e.g \cite{nakcpde})

\begin{equation}
\begin{array}{ll}
\int_{r < R  } |f|^{1_{2}^{*}} \; dx & \lesssim \| \nabla f \|^{1_{2}^{*} - 2} _{L^{2}} \int_{r < R}  |f_{\theta}|^{2} + \frac{|f|^{2}}{r^{2}} \; dx
\end{array}
\nonumber
\end{equation}
we get (\ref{Eqn:MorawEstCone}).

\end{proof}

\subsection{The proof}

In this subsection we prove Proposition \ref{Prop:BoundLong}. We use an argument in \cite{nakimrn}. Divide $K$ into subintervals $(K_{j})_{ 1  \leq j \leq l  }$ such that
$\| u \|_{L_{t}^{\frac{2(n+1)}{n-2}} L_{x}^{\frac{2(n+1)}{n-2}} (K_{j})} = c g^{-\frac{1}{1_{2}^{*}-1}}(M)$ for  $1 \leq j < l$  and
$\| u \|_{L_{t}^{\frac{2(n+1)}{n-1}} L_{x}^{\frac{2(n+1)}{n-1}}(K_{l})} \leq c g^{-\frac{1}{1_{2}^{*}-1}}(M) $, with $c$ constant defined
in Lemma \ref{lem:Concentration}. In view of (\ref{Eqn:BoundStrichLong}) and the triangle inequality, we may assume WLOG that
$\| u \|_{L_{t}^{\frac{2(n+1)}{n-1}} L_{x}^{\frac{2(n+1)}{n-1}}(K_{j})} = cg^{-\frac{1}{1_{2}^{*}-1}}(M) $ for all $1 \leq  j \leq l$ and it suffices to prove that there
exists $C \gg 1$ such that

\begin{equation}
\begin{array}{ll}
l & \lesssim C^{C g^{b_{n}^{+}}(M)}
\end{array}
\label{Eqn:Estl}
\end{equation}
In the sequel we say that $\bar{C}$ (resp. $\bar{c}$) is a constant associated to $a' \lesssim b' $ (resp. $a' \ll b'$) if the constant $\bar{C} > 0$ (resp. the constant
$ 0 <  \bar{c} \ll 1$)
satisfies $a' \leq \bar{C} b' $ (resp. $a' \leq \bar{c} b'$). In the sequel we choose constants associated to expressions of the type
$ a \lesssim b $  or the type  $ a \ll b $ in (\ref{Eqn:DyadExtr2}) and (\ref{Eqn:ChoicePQR}) in such a way that all the estimates and statements below are true. Recall the following result:

\begin{res}{(straightforward modification of Lemma $4.2$ in \cite{nakimrn})}
Let $N \in \{1,2,... \}$. Let $S  \subset \mathbb{R}^{n+1} $ be a set. There exists a constant $\bar{C}  \gg 1$ such that if
$\card{(S)} \geq  \bar{C}^{\bar{C} N } $ then one can find at least $N$ distinct points $ z_{1},z_{2},...,z_{N} \in  S $ such that for all
for all $j \in \{ 2,...,N \}$

\begin{equation}
\begin{array}{l}
| z_{j} - z_{N} | \ll | z_{j-1} - z_{N}|
\end{array}
\label{Eqn:DyadExtr}
\end{equation}

\label{Res:DyadicExt}
\end{res}
By Lemma \ref{lem:Concentration} there exist $x_{j} \in \mathbb{R}^{n}$, $J_{j} \subset K_{j}$ and
$0 < R_{j} \lesssim g^{\frac{(n+1)(n^{2}-3n +6)}{8}} (M) |J_{j}| $ such that for all $t \in J_{j}$

\begin{equation}
\begin{array}{ll}
\int_{|x-x_{j}| \leq R_{j}} |u(t,x)|^{1_{2}^{*}} \; dx  & \gtrsim \eta^{1_{2}^{*}(n-1)} \cdot
\end{array}
\label{Eqn:ConcJj}
\end{equation}
Let $t_{j} \in J_{j}$. \\
We may assume WLOG that $l \gg 1$. Hence we can choose $N \in \{1,2,... \}$ such that
$ l \geq \bar{C}^{\bar{C}(N+1)} $ and $ l \approx \bar{C}^{\bar{C}(N+1)} $. From Result \ref{Res:DyadicExt} there exist $N$ distinct
points $y_{1} :=(t_{1},x_{1})$,..., and $y_{N}:=(t_{N},c_{N})$ such that for all $j \in \{ 2,...,N \}$

\begin{equation}
\begin{array}{ll}
| y_{j} - y_{N} | & \ll |y_{j-1} - y_{N}|
\end{array}
\label{Eqn:DyadExtr2}
\end{equation}
Define

\begin{equation}
\begin{array}{ll}
S' := \left\{ y_{1},...,y_{N} \right\}, // 
P := \{ j \in S': \; |y_{j}  - y_{N}| \lesssim  R_{j}  \}, \\
Q := \{ j \in S'/P: \; |x_{j} - x_{N}| \lesssim  |t_{j} - t_{N}| \}, \, \text{and} \\ 
R := S' / (P \cup Q) \cdot
\end{array}
\label{Eqn:ChoicePQR}
\end{equation}
We first estimate $\card{(P)}$. We get from (\ref{Eqn:MorawetzEst}) and (\ref{Eqn:ConcJj})

\begin{equation}
\begin{array}{lll}
1 & \gtrsim \int_{[0,\epsilon_{0})} \int \frac{|u(t,x)|^{1_{2}^{*}}}{| x - x_{N} |} \; dx \; dt
  \gtrsim  \sum \limits_{j \in P} \int_{J_{j}} \int \frac{|u(t,x)|^{1_{2}^{*}}}{| x - x_{N} |} \; dx \; dt \\
  & \gtrsim   \sum \limits_{j \in P}   \frac{\eta^{1_{2}^{*} (n-1) } |J_{j}| }{R_{j}} \\
  & \gtrsim \card{(P)} \frac{ \eta^{1_{2}^{*} (n-1)}} {g^{\frac{(n+1)(n^{2}-3n + 6)}{8}+}(M)}
\end{array}
\nonumber
\end{equation}
We then estimate $\card{(Q)}$. Let $ j \in Q $. Let  $ B_{j} := \{ x \in \mathbb{R}^{n}: \; |x - x_{j}| \leq R_{j} \}$.
Let $ x \in  B_{j}$. Then $| t_{j} - t_{N}| \gtrsim R_{j} + |x_{j} - x_{N}| \gtrsim | x - x_{N}| $. Hence we see from
the application of (\ref{Eqn:MorawEstCone}) with $\alpha \gg 1$ large enough  that

\begin{equation}
\begin{array}{ll}
1 \gtrsim \card{(Q)} \eta^{1_{2}^{*}(n-1)} \cdot
\end{array}
\nonumber
\end{equation}
It remains to estimate  $\card{(R)}$. Let $ j \in R $. We define
$\tilde{B}_{j} := \left\{ x \in \mathbb{R}^{n}: \; |x - x_{j} | \leq R_{j} + |t_{j} - t_{N}| \right\}$. Let $k \in R$ such that
$j \neq k$. Observe that  $ \tilde{B}_{j} \cap \tilde{B}_{k} = \emptyset$. Define

\begin{equation}
\begin{array}{ll}
e(u) & := \frac{1}{2} |\p_{t} u|^{2}  + \frac{1}{2} |\nabla u |^{2} + \frac{1}{2} |u|^{2} + F(u,\bar{u})
\end{array}
\nonumber
\end{equation}
If $ t_{N} \geq t_{j} $ then by integrating the well-known energy identity
$ \Re \left( \overline{\partial_{t} u }  \left( \p_{tt} u - \triangle u + u + |u|^{1_{2}^{*}-2} u g(|u|) \right) \right) = \partial_{t} e(u) - \Re \left( \nabla \cdot \left( \overline{\partial_{t} u} \nabla u \right) \right) $ on the forward cone
$ \bar{K}_{j,f} := \left\{ (t,x): \;  t_{N} \geq  t \geq t_{j}, \; t > t_{j} + |x -x_{j}| - R_{j} \right\} $ we get

\begin{equation}
\begin{array}{ll}
\int_{\tilde{B}_{j}} e(u(t_{N})) \; dx & \geq \int_{B_{j}} e(u(t_{j})) \; dx .
\end{array}
\label{Eqn:NrjEst}
\end{equation}
If $ t_{N} \leq t_{j} $ then integrating the same identity in the backward cone
$ \bar{K}_{j,b} := \left\{ (t,x): \;  t_{j} \geq  t \geq t_{N}, \; t - t_{j} < R_{j} - |x -x_{j}| \right\} $ we get
(\ref{Eqn:NrjEst}). Hence

\begin{equation}
\begin{array}{ll}
E  = \int_{\mathbb{R}^{n}} e(u(t_{N})) \; dx & \geq \sum \limits_{j \in R} \int_{\tilde{B}_{j}} e(u(t_{N})) \; dx  \\
& \geq \sum  \limits_{j \in R} \int_{B_{j}} e(u(t_{j})) \; dx \\
& \geq \card{(R)} \eta^{1_{2}^{*}(n-1)}
\end{array}
\nonumber
\end{equation}
Hence $ N  \lesssim g^{b_{n}+}(M) $ and (\ref{Eqn:Estl}) holds.

\section{Appendixes}

Unless otherwise stated, let

\begin{itemize}

\item  $c > 0$  be a constant allowed to change from one line to another one
and that is small enough
\item $C > 0$ be a constant allowed to change from one line to another one
\item $\theta \in (0,1)$  be a constant allowed to change from one line to another one
\item  $x+$ (resp. $x-$) be a number allowed to changed from one line to another one and slightly larger (resp. slightly smaller ) than $x$
\item $x++$ be a number allowed to changed from one line to another one and slightly larger than $x+$
\item $x+++$ be a number  allowed to changed from one line to another one and slightly larger than $x++$
\item $\infty-$ be a finite constant allowed to changed from one line to another one and very large
\end{itemize}
such that all the estimates (and statements) in Appendix $A$ and in Appendix $B$ are true. We recommend that the reader plots all the points
$ \left( \frac{1}{a}, \frac{1}{b} \right)$ wherever $L_{t}^{a} L_{x}^{b}$ appears
on the coordinate plane $Oxy$ with $Ox$ (resp. $Oy$) representing the $x-$axis (resp. the $y-$ axis ).

\subsection{Appendix $A$ }

In this appendix we prove Lemma \ref{lem:NonlinearControl}.\\
\\
Let $r$ be such that $\frac{n-2}{2(n+1)} + \frac{n}{r} = \frac{n}{2} - \frac{1}{2} $ . \\
\\
Assume that $n=5$. Observe that $ \left( \frac{2n}{n-2}, \frac{2(n+1)}{n-2}, \frac{2(n+1)}{n-1}, \frac{2n}{n-3} \right) =
\left( \frac{10}{3} ,4, 3, 5 \right)$. \\
\\
Assume also that $ 1<k<2$. We see from (\ref{Eqn:LeibnCompos}) and $g^{'}(|f|) |f| + g \left( |f| \right) \lesssim 1 + |f|^{0+}$ that

\begin{equation}
\begin{array}{l}
\left\| \langle D \rangle^{k - 1} \left( |u|^{1_{2}^{*}-2} u g(|u|) \right) \right\|_{ L_{t}^{1} L_{x}^{2} (I)} \\
\lesssim
\left\| \langle D \rangle^{k-1} u \right\|_{L_{t}^{2} L_{x}^{5} (I) }
\left(
\| u \|^{\frac{4}{3}}_{L_{t}^{\frac{8}{3}} L_{x}^{\frac{40}{9}} (I) } +
\| u \|^{\frac{4}{3}}_{L_{t}^{\frac{8}{3}} L_{x}^{\frac{40}{9}+} (I) } \| u \|^{C}_{L_{t}^{\infty} L_{x}^{1_{2}^{*}} (I)}
\right) \\
\lesssim  \text{R.H.S of (\ref{Eqn:PropNonLin1})}
\end{array}
\label{Eqn:App0}
\end{equation}
Here we used $H^{\tilde{k}} \hookrightarrow L^{1_{2}^{*}}$,

\begin{equation}
\begin{array}{ll}
\| u \|_{L_{t}^{\frac{8}{3}} L_{x}^{\frac{40}{9}} (I)} & \lesssim
\| u \|^{\theta}_{L_{t}^{2} L_{x}^{5} (I)}
\| u \|^{1- \theta}_{L_{t}^{4} L_{x}^{4} (I)}, \\
\| u \|_{L_{t}^{\frac{8}{3}} L_{x}^{\frac{40}{9}+} (I)} & \lesssim \| u \|^{\theta}_{L_{t}^{\frac{8}{3}} L_{x}^{\frac{40}{9}} (I)}
\| u \|^{1- \theta}_{L_{t}^{\frac{8}{3}} L_{x}^{\frac{40}{9}++} (I)},
\end{array}
\nonumber
\end{equation}
followed by the embedding $H^{0+,\tilde{r}} \hookrightarrow L^{\frac{40}{9}++} $ ( with $\tilde{r}$ such that
$ \frac{1}{\frac{8}{3}++} + \frac{5}{\tilde{r}} = \frac{5}{2} - 1 $ ) and

\begin{equation}
\begin{array}{ll}
\left\| \langle D \rangle^{0+} u \right\|_{L_{t}^{\frac{8}{3}} L_{x}^{\tilde{r}} (I)} & \lesssim
\left\| \langle D \rangle^{0+} u \right\|^{\theta}_{L_{t}^{2} L_{x}^{5}(I)}
\left\|  \langle D \rangle^{0+} u \right\|^{1- \theta}_{L_{t}^{\infty} L_{x}^{1_{2}^{*}}(I)} \\
& \lesssim \left\| \langle D \rangle^{\tilde{k}-1} u \right\|^{\theta}_{L_{t}^{2} L_{x}^{5}(I)}
\left\| \langle D \rangle^{\tilde{k}} u \right\|^{1 - \theta}_{L_{t}^{\infty} L_{x}^{2}(I)} \cdot
\end{array}
\label{Eqn:App1}
\end{equation}
In the expression above we used the embedding $ \left\|  \langle D \rangle^{0+} u \right\|_{L_{t}^{\infty} L_{x}^{1_{2}^{*}}(I)} \lesssim
\left\|  \langle D \rangle^{\tilde{k}} u \right\|_{L_{t}^{\infty} L_{x}^{2}(I)} $. \\
Assume now that $ 2 \leq k < \frac{7}{3} $. Then by using the Plancherel theorem and by expanding the gradient we see that
$ \left\| \langle D \rangle^{k - 1} \left( |u|^{1_{2}^{*}-2} u g(|u|) \right) \right\|_{ L_{t}^{1} L_{x}^{2} (I)}$ is bounded by terms of the form

\begin{equation}
\begin{array}{ll}
Y_{0} & := \left\| |u|^{1_{2}^{*}-2} u g(|u|) \right\|_{ L_{t}^{1} L_{x}^{2} (I)}, \; \\
Y_{1} & := \left\| \langle D \rangle^{k-2} \left( \nabla u G(u,\bar{u})  g(|u|) \right) \right\|_{ L_{t}^{1} L_{x}^{2} (I)}, \\
Y_{2} & :=  \left\| \langle D \rangle^{k-2} \left( \nabla u  G(u,\bar{u}) g^{'}(|u|) |u| \right) \right\|_{ L_{t}^{1} L_{x}^{2} (I)}, \; \text{and}
\end{array}
\nonumber
\end{equation}
terms that are similar to $Y_{1}$ and $Y_{2}$. Here $G$ is a
$\mathcal{C}^{1}(\mathbb{R}^{2})-$ function such that $|G(f,\bar{f})| \approx |f|^{\frac{4}{3}}$. We have

\begin{equation}
\begin{array}{ll}
Y_{0} & \lesssim  \|  u \|_{L_{t}^{2} L_{x}^{5} (I) }
\left(
\| u \|^{\frac{4}{3}}_{L_{t}^{\frac{8}{3}} L_{x}^{\frac{40}{9}} (I) } +
\| u \|^{\frac{4}{3}}_{L_{t}^{\frac{8}{3}} L_{x}^{\frac{40}{9}+} (I) } \| u \|^{C}_{L_{t}^{\infty} L_{x}^{1_{2}^{*}}(I)}
\right) \\
& \lesssim  \text{R.H.S of (\ref{Eqn:PropNonLin1})} \cdot
\end{array}
\nonumber
\end{equation}
We then only estimate $Y_{1}$ since $Y_{2}$ is estimated similarly. We see from (\ref{Eqn:LeibnProd}) that $ Y_{1} \lesssim A + B $ with

\begin{equation}
\begin{array}{ll}
A & :=  \left\| \langle D \rangle^{k-1} u \right\|_{L_{t}^{2} L_{x}^{5} (I)}
\left\| G(u,\bar{u})  g(|u|) \right\|_{L_{t}^{2} L_{x}^{\frac{10}{3}} (I)}, \; \text{and} \\
B & :=  \left\| \langle D \rangle^{k-2} \left( G(u,\bar{u}) g(|u|) \right) \right\|_{L_{t}^{\frac{8}{5}} L_{x}^{\frac{40}{11}} (I)}
\| \nabla u \|_{L_{t}^{\frac{8}{3}} L_{x}^{\frac{40}{9}} (I)} \cdot
\end{array}
\nonumber
\end{equation}
We have

\begin{equation}
\begin{array}{ll}
A & \lesssim  \left\| \langle D \rangle^{k-1} u \right\|_{L_{t}^{2} L_{x}^{5} (I)}
\left(
\| u \|^{\frac{4}{3}}_{L_{t}^{\frac{8}{3}} L_{x}^{\frac{40}{9}} (I)}
+ \| u \|^{\frac{4}{3}}_{L_{t}^{\frac{8}{3}} L_{x}^{\frac{40}{9}+} (I)} \| u \|^{C}_{L_{t}^{\infty} L_{x}^{1_{2}^{*}}(I)}
\right) \\
& \lesssim \text{R.H.S of (\ref{Eqn:PropNonLin1})} \cdot
\end{array}
\nonumber
\end{equation}
We see from (\ref{Eqn:LeibnCompos}) that

\begin{equation}
\begin{array}{ll}
B & \lesssim  \left\| \langle D \rangle^{k-2} u \right\|_{L_{t}^{2} L_{x}^{5} (I)}
\left(
\| u \|^{\frac{1}{3}}_{L_{t}^{\frac{8}{3}} L_{x}^{\frac{40}{9}} (I) }
+ \| u \|^{\frac{1}{3}}_{L_{t}^{\frac{8}{3}} L_{x}^{\frac{40}{9}+} (I) }
\| u \|^{C}_{L_{t}^{\infty} L_{x}^{1_{2}^{*}}(I)}
\right)
\| \nabla u \|_{L_{t}^{\frac{8}{3}} L_{x}^{\frac{40}{9}} (I) } \\
& \lesssim \text{R.H.S of (\ref{Eqn:PropNonLin1})},
\end{array}
\nonumber
\end{equation}
using at the last line the embedding  $H^{\tilde{k}} \hookrightarrow L^{1_{2}^{*}} $ and the embedding $H^{k-1} \hookrightarrow L^{1_{2}^{*}} $ to get

\begin{equation}
\begin{array}{l}
\| \nabla u \|_{L_{t}^{\frac{8}{3}} L_{x}^{\frac{40}{9}} (I)}  \lesssim \| \nabla u \|^{\theta}_{L_{t}^{\infty} L_{x}^{1_{2}^{*}} (I)}
\| \nabla u \|^{1 - \theta}_{L_{t}^{2} L_{x}^{5} (I)}  \lesssim X_{k} \left( I,u \right) \cdot
\end{array}
\nonumber
\end{equation}
Hence $ Y_{1} \lesssim  \text{R.H.S of (\ref{Eqn:PropNonLin1})} $. \\
\\
Assume now that $n \in \{ 3, 4 \}$.  \\
\\
We write $ k - \frac{1}{2} = m + \alpha $ with $ 0 \leq \alpha < 1 $ and $m$ nonnegative integer.  From
$ \| \langle D \rangle^{m} f \|_{L^{\frac{2(n+1)}{n-1}}} \lesssim  \| f \|_{L^{\frac{2(n+1)}{n-1}}} + \| D^{m} f \|_{L^{\frac{2(n+1)}{n-1}}} $
and $ \| D^{m} f \|_{L^{\frac{2(n+1)}{n-1}}} \lesssim  \sum \limits_{\gamma \in \mathbb{N}^{n}: |\gamma|=m }
\| \partial^{\gamma} f \|_{L^{\frac{2(n+1)}{n-1}}} $ \footnote{if $m=1$ then one can prove the estimate by using the identity
$f = \sum \limits_{j=1}^{n} R_{j} D^{-1} \partial_{x_{j}} f $ with $R_{j}$ Riesz transform defined by $ \widehat{R_{j} f} (\xi) := - i \frac{\xi_{j}}{|\xi|} \hat{f}(\xi) $
and the boundedness of the $R_{j} \, s$; if $m>1$ then one can prove the estimate by induction.} we see that

\begin{equation}
\begin{array}{ll}
\left\| \langle D \rangle^{k- \frac{1}{2}} \left( |u|^{1_{2}^{*} - 2} u g(|u|) \right) \right\|_{L_{t}^{\frac{2(n+1)}{n+3}} L_{x}^{\frac{2(n+1)}{n+3}} (I)}
& \lesssim \left\| \langle D \rangle^{\alpha} X \right\|_{L_{t}^{\frac{2(n+1)}{n+3}} L_{x}^{\frac{2(n+1)}{n+3}} (I) } \\
& + \sum \limits_{\gamma\in \mathbb{N}^{n}: |\gamma|=m } \left\| \langle D \rangle^{\alpha} \partial^{\gamma} X  \right\|_{L_{t}^{\frac{2(n+1)}{n+3}} L_{x}^{\frac{2(n+1)}{n+3}} (I)} ,
\end{array}
\label{Eqn:InterpHr}
\end{equation}
with $X  := |u|^{1_{2}^{*}-2} u g(|u|) $. We get from (\ref{Eqn:LeibnCompos}) and $g^{'}(|f|) |f| + g(|f|) \lesssim  1 + |f|^{0+}$

\begin{equation}
\begin{array}{l}
\left\| \langle D \rangle^{\alpha} X \right\|_{L_{t}^{\frac{2(n+1)}{n+3}} L_{x}^{\frac{2(n+1)}{n+3}}(I)} \\
\lesssim \left\| \langle D \rangle^{\alpha} u \right\|_{L_{t}^{\frac{2(n+1)}{n-1}} L_{x}^{\frac{2(n+1)}{n-1}}(I)}
\left( \| u \|^{1_{2}^{*}-2}_{L_{t}^{\frac{2(n+1)}{n-2}} L_{x}^{\frac{2(n+1)}{n-2}} (I)}
+ \| u \|^{1_{2}^{*} -2}_{L_{t}^{\frac{2(n+1)}{n-2}} L_{x}^{\frac{2(n+1)}{n-2}+} (I)} \| u \|^{C}_{L_{t}^{\infty} L_{x}^{1_{2}^{*}}([0,T_{l}])}
\right) \\
\lesssim \text{R.H.S of (\ref{Eqn:PropNonLin1})} \cdot
\end{array}
\label{Eqn:InitContDalphX}
\end{equation}
In the expression above we used the embedding $ H^{\tilde{k}} \hookrightarrow L^{1_{2}^{*}} $,

\begin{equation}
\begin{array}{ll}
\| u \|_{L_{t}^{\frac{2(n+1)}{n-2}} L_{x}^{\frac{2(n+1)}{n-2}+} (I)} & \lesssim \| u \|^{\theta}_{L_{t}^{\frac{2(n+1)}{n-2}} L_{x}^{\frac{2(n+1)}{n-2}} (I)}
\| u \|^{1- \theta}_{L_{t}^{\frac{2(n+1)}{n-2}} L_{x}^{\frac{2(n+1)}{n-2}++} (I)}  \\
& \lesssim (\delta')^{c} \langle X_{\tilde{k}} \left( I ,u \right)  \rangle^{C},
\end{array}
\label{Eqn:InitInterpStrichLittle}
\end{equation}
where at the last line we used the embedding $H^{\tilde{k} - \frac{1}{2},r} \hookrightarrow L^{\frac{2(n+1)}{n-2}++}$ followed by

\begin{equation}
\begin{array}{ll}
\left\| \langle D \rangle^{ \tilde{k}- \frac{1}{2}} u \right\|_{L_{t}^{\frac{2(n+1)}{n-2}} L_{x}^{r} (I)} &
\lesssim \left\| \langle D \rangle^{ \tilde{k}- \frac{1}{2}} u \right\|^{\theta}_{L_{t}^{\frac{2(n+1)}{n-1}} L_{x}^{\frac{2(n+1)}{n-1}} (I)}
\left\| \langle D \rangle^{ \tilde{k}- \frac{1}{2}} u \right\|^{1 - \theta}_{L_{t}^{\infty} L_{x}^{\frac{2n}{n-1}} (I)} \\
& \lesssim \left\| \langle D \rangle^{ \tilde{k}- \frac{1}{2}} u \right\|^{\theta}_{L_{t}^{\frac{2(n+1)}{n-1}} L_{x}^{\frac{2(n+1)}{n-1}} (I)}
\| u \|^{1 - \theta}_{L_{t}^{\infty} H^{\tilde{k}} (I)} \cdot
\end{array}
\label{Eqn:InitInterpOneHalf}
\end{equation}
In the expression above we used the embedding $H^{\tilde{k}} \hookrightarrow H^{\tilde{k} - \frac{1}{2}, \frac{2n}{n-1}}$. Hence if $m =0$ then $\left\| \langle D \rangle^{k- \frac{1}{2}} \left( |u|^{1_{2}^{*} -2} u g(|u|) \right) \right\|_{L_{t}^{\frac{2(n+1)}{n+3}} L_{x}^{\frac{2(n+1)}{n+3}} (I)} \lesssim \text{R.H.S of
(\ref{Eqn:PropNonLin1})} $.  \\
\\
\underline{Note}: we may assume WLOG that $m >0$. \\
\\
We have to estimate  $ \left\| \langle  D \rangle^{\alpha} \partial^{\gamma} X \right\|_{L_{t}^{\frac{2(n+1)}{n+3}} L_{x}^{\frac{2(n+1)}{n+3}} (I) }$.
Let $\tilde{g}(x):= \log^{\gamma} \left( \log (10 + x) \right)$. Then $g(x) = \tilde{g}(x^{2})$. \\
If $n=3$ (resp. $n=4$) and $m \leq 5$ (resp. $ m \leq 3 $) then by expanding  $\partial^{\gamma} X $ we see that it is a finite sum of terms of the form

\begin{equation}
\begin{array}{l}
 X^{'} := \partial^{\gamma^{'}} \tilde{g}(|u|^{2}) S_{\gamma^{'}} (u,\bar{u})  (\partial^{\gamma_{1}} u)^{\alpha_{1}}...( \partial^{\gamma_{p}} u)^{\alpha_{p}}
( \partial^{\bar{\gamma}_{1}} \bar{u} )^{\bar{\alpha}_{1}}... ( \partial^{\bar{\gamma}_{p'}} \bar{u} )^{\bar{\alpha}_{p'}} \cdot
\end{array}
\nonumber
\end{equation}
Here $\gamma^{'} \in \mathbb{N}$ and $S_{\gamma'}(u,\bar{u})$ is of the form $C^{'} u^{p_{1}} \bar{u}^{p_{2}}$  for some $C^{'} \in \mathbb{R}$
and some $(p_{1}, p_{2}) \in \mathbb{N}^{2}$ such that $p_{1} + p_{2} = \gamma^{'}$.
Here $ p $, $ p^{'} $, $ \gamma_{1} $,..., $\gamma_{p}$, $\bar{\gamma}_{1} $,...,$\bar{\gamma}_{p'}$, $\alpha_{1}$,..., $\alpha_{p}$, $\bar{\alpha}_{1}$,..., $\bar{\alpha}_{p'} $ satisfy the following properties: $ p \neq 0 $ or $p^{'} \neq  0$,
$ \left( \gamma_{1},..., \gamma_{p},\bar{\gamma}_{1},.., \bar{\gamma}_{p'} \right) \in \mathbb{N}^{n} \times ... \times \mathbb{N}^{n} $,
there exists  $i \in \{ 1,...,p \}$ such that $\gamma_{i} \neq (0,...,0)$ or there exists $i^{'} \in \{1,..,p' \}$ such that
$\gamma_{i'} \neq (0,...,0)$, $ \left( \alpha_{1},...\alpha_{p},
\bar{\alpha}_{1},..., \bar{\alpha}_{p'} \right) \in \mathbb{N}^{*} \times ... \times \mathbb{N}^{*} $, $ \alpha_{1} |\gamma_{1}| ... + \alpha_{p} |\gamma_{p}|
+ \bar{\alpha}_{1} |\bar{\gamma}_{1}| + ... + \bar{\alpha}_{p'} |\bar{\gamma}_{p'}| = m $ and $ \alpha_{1} + ... + \alpha_{p} +
\bar{\alpha}_{1} + ... + \bar{\alpha}_{p'}=  1_{2}^{*} - 1 $. \\
If $n=3$ (resp. $n=4$) and $ m \leq 4 $ (resp. $m  \leq 2$ ) then $X^{'}$ satisfies
either $(A)$ or $(B)$ with $(A)$, $(B)$ defined by

\begin{equation}
\begin{array}{l}
(A): \exists i \in \{ 1,...,p \} \; \text{s.t} \; \gamma_{i}=(0,...,0) \; \text{and} \; \alpha_{i} \geq  1 \\
(B): \exists i' \in \{ 1,..,p^{'} \} \; \text{s.t} \; \bar{\gamma}_{i'} = (0,...,0) \; \text{and} \; \bar{\alpha}_{i'} \geq 1
\end{array}
\nonumber
\end{equation}
If $n=3$ (resp. $n=4$) and $ m \geq 5 $ (resp. $ m \geq 3 $) then $ \partial^{\gamma} X $ can be written as a finite sum of terms of the form
$ X_{1}^{'} $ or $ X_{2}^{'} $

\begin{equation}
\begin{array}{l}
X_{1}^{'} := X^{'}, \; \text{and} \\
X_{2}^{'} := X^{''} \partial^{\tilde{\gamma}^{'}} \tilde{g}(|u|^{2}) (\partial^{\gamma_{1}} u)^{\alpha_{1}}...( \partial^{\gamma_{p}} u)^{\alpha_{p}}  (\partial^{\bar{\gamma}_{1}} \bar{u})^{\bar{\alpha}_{1}}... (\partial^{\bar{\gamma}_{p'}} \bar{u})^{\bar{\alpha}_{p'}} \cdot
\end{array}
\label{Eqn:WriteXDer}
\end{equation}
In the definition of $X_{2}^{'}$ above, $\tilde{\gamma}^{'} \in \mathbb{N}^{*}$; $p$, $ p^{'} $, $ \gamma_{1} $,..., $\gamma_{p}$, $\bar{\gamma}_{1} $,...,$\bar{\gamma}_{p'}$, $\alpha_{1}$,..., $\alpha_{p}$, $\bar{\alpha}_{1}$,..., $\bar{\alpha}_{p'} $ satisfy the same properties as those stated for the case $n=3$ (resp. $n=4$) and $m \leq 5 $ (resp. $ m \leq 3 $ ) except that
`` $\alpha_{1} |\gamma_{1}| + ... + \alpha_{p} |\gamma_{p}|+ \bar{\alpha}_{1} |\bar{\gamma}_{1}|+ ... + \bar{\alpha}_{p'} |\bar{\gamma}_{p'}| = m $ '' is replaced with
`` $ \alpha_{1} |\gamma_{1}| + ... + \alpha_{p} |\gamma_{p}|+ \bar{\alpha}_{1} |\bar{\gamma}_{1}|+ ... + \bar{\alpha}_{p'} |\bar{\gamma}_{p'}| \leq m $ ''; $X^{''}$ is
a product of terms of the form $( \partial^{\delta_{1}} u )^{\tilde{\alpha}_{1}} ...
(\partial^{\delta_{\tilde{p}}} u)^{\tilde{\alpha}_{\tilde{p}}}
( \partial^{\bar{\delta}_{1}} \bar{u} )^{\overline{\tilde{\alpha}}_{1}} ...
(\partial^{\bar{\delta}_{p'}} \bar{u})^{\overline{\tilde{\alpha}}_{\tilde{p}^{'}}} $ with $\tilde{p} \neq 0$ or
$\tilde{p}^{'} \neq 0$, and $(\tilde{\alpha}_{1},..., \tilde{\alpha}_{\tilde{p}},
\overline{\tilde{\alpha}}_{1},..., \overline{\tilde{\alpha}}_{\tilde{p}^{'}} ) \in \mathbb{N}^{*} \times ... \times \mathbb{N}^{*} $. These terms contain a small number of derivatives compare with $k$. More precisely if $ \delta_{\max} := \max \left( |\delta_{1}|,..., |\delta_{\tilde{p}}|, |\bar{\delta}_{1}|,..., |\bar{\delta}_{\tilde{p}^{'}}| \right)  $ then

\begin{equation}
n=3: \; \delta_{max} \leq m - 5; \; n=4: \; \delta_{max} \leq m - 3:
\label{Eqn:Estm}
\end{equation}
this fact will allow to use embeddings of the type $ H^{k} \hookrightarrow L^{\infty} $.
The proof of (\ref{Eqn:WriteXDer}) follows from an induction process applied to $m$, taking into account that for $n=3$ (resp. $n=4$)  and $\gamma$ such that $|\gamma|= 5$ ( resp. $|\gamma| = 3 $ ) $ \partial^{\gamma} X $ can be written as a finite sum of terms of the form $ X^{'}$.\\
\\
Assume that $n=3$ (resp. $n=4$) and $ m \leq 4 $ (resp. $ m \leq 2 $). We may assume WLOG that $(A)$ holds. Reordering the $\gamma_{i}$ s if necessary, we may assume WLOG
that $|\gamma_{1}|  := \min \left( |\gamma_{1}|,...,|\gamma_{p}| \right)$. Hence $|\gamma_{1}|=0 $ and we see  from (\ref{Eqn:LeibnProd}), the boundedness of the Riesz transforms
and Fact $1$ (see Section \ref{Sec:JensenIneq}), and $\overline{\langle D \rangle^{\alpha'} f} = \langle D \rangle^{\alpha'} \bar{f}$ for $\alpha^{'} \in \mathbb{R}$, that $ \| \langle D \rangle^{\alpha} \partial^{\gamma} X \|_{L_{t}^{\frac{2(n+1)}{n+3}} L_{x}^{\frac{2(n+1)}{n+3}} (I)} $ is bounded by a finite sum of terms of the form:

\begin{equation}
\begin{array}{ll}
Y & := \left\| \langle D \rangle^{\alpha} \left( u \partial^{\gamma^{'}} \tilde{g}(|u|^{2}) S_{\gamma^{'}} (u,\bar{u}) \right) \right\|_{L_{t}^{Q}
L_{x}^{R} (I)}
\prod \limits_{  s \in [1,...,p+p'] }  \left\| \langle D \rangle^{|\gamma_{s}^{'}|} u \right\|^{\alpha^{'}_{s}}_{L_{t}^{Q_{s}} L_{x}^{R_{s}} (I)},  \; \text{or} \\
Z^{'}_{j} & :=   \|  u \partial^{\gamma^{'}} \tilde{g}(|u|^{2}) S_{\gamma^{'}} (u,\bar{u}) \|_{L_{t}^{Q'} L_{x}^{R'} (I)}
\left\|  \langle D \rangle^{|\gamma^{'}_{j}|} u  \right\|^{\alpha^{'}_{j}-1}_{L_{t}^{Q^{'}_{j,1}} L_{x}^{R^{'}_{j,1}}(I)}
\left\|  \langle D \rangle^{|\gamma^{'}_{j}| + \alpha} u  \right\|_{L_{t}^{Q^{'}_{j,2}} L_{x}^{R^{'}_{j,2}}(I)}
\\
&  \prod \limits_{ \substack{ s \in [1,..,p+p']  \\ s \neq j } } \left\| \langle D \rangle^{|\gamma^{'}_{s}|} u  \right\|^{\alpha_{s}^{'}}_{
L_{t}^{Q^{'}_{s}} L_{x}^{R^{'}_{s}} (I)} \cdot
\end{array}
\label{Eqn:DefYZPrimej}
\end{equation}
Here $j \in \{ 1,...,p + p^{'} \}$. In the expression above we define $\gamma^{'}_{s}$ (resp. $\alpha_{s}^{'}$) in the following fashion:
$\gamma^{'}_{1} := (0,...,0)$, $ \alpha^{'}_{1} := \alpha_{1} - 1  $,
$ 2 \leq s \leq p $: $ \left( \gamma^{'}_{s}, \alpha_{s}^{'} \right)  := \left( \gamma_{s}, \alpha_{s} \right) $ ; $ p + p^{'} \geq s \geq  p+1 $:
$  \left( \gamma^{'}_{s}, \alpha_{s}^{'} \right) := \left( \bar{\gamma}_{s-p}, \bar{\alpha}_{s-p} \right) $. Here $Q$, $R$, $Q_{s}$, $R_{s}$, $Q^{'}$, $R^{'}$, $Q^{'}_{j,1}$, $R^{'}_{j,1}$, $Q^{'}_{j,2}$, $R^{'}_{j,2}$, $Q^{'}_{s}$, $R^{'}_{s}$ are numbers to be chosen that satisfy the following constraints: $R \neq \infty$,
$R_{s} \neq \infty$, $R' \neq \infty$, $R_{j,1} \neq \infty$, $R^{'}_{j,2} \neq \infty$, $R^{'}_{s} \neq \infty$, $ \left( \frac{1}{Q}, \frac{1}{R} \right) + \sum \limits_{ s \in [1,..,p+p^{'}] }   \alpha^{'}_{s}  \left( \frac{1}{Q_{s}}, \frac{1}{R_{s}} \right) = \frac{n+3}{2(n+1)} (1,1) $, and
$\left( \frac{1}{Q'} + \frac{\alpha_{j}^{'} -1}{Q_{j,1}^{'}} + \frac{1}{Q_{j,2}^{'}}, \frac{1}{R'} + \frac{\alpha_{j}^{'} -1}{R_{j,1}^{'}} + \frac{1}{R_{j,2}^{'}} \right)
+ \sum \limits_{ \substack{ s \in [1,..,p+p^{'}] \\ s \neq j,}} \alpha^{'}_{s} \left( \frac{1}{Q^{'}_{s}}, \frac{1}{R^{'}_{s}} \right)
=  \frac{n+3}{2(n+1)} (1,1) $. Observe that  $ \left( \gamma^{'},\gamma^{'}_{1},..., \gamma^{'}_{p+p^{'}}  \right) $ and $ \left( \alpha^{'}_{1},...\alpha^{'}_{p+p^{'}} \right) $ satisfy the following properties: $ \alpha^{'}_{1} |\gamma^{'}_{1}| ... + \alpha^{'}_{p+p^{'}} |\gamma^{'}_{p+p^{'}}| \leq m  $
and $ \alpha^{'}_{1} + ... + \alpha^{'}_{p+p^{'}} = 1_{2}^{*} - 2 $. \\
By collecting the $ \gamma^{'}_{s} $ that have the same length $|.|$ we see that there exists $ 1 \leq t \leq p+p^{'} $ and that there exist numbers $ n_{i} $, $ i \in \{ 1,..,t \} $, such that the following properties hold:  $ n_{i} \in \{ 1,...,p+p' \}$,
$ | \gamma^{'}_{n_{1}} | < | \gamma^{'}_{n_{2}} | < ... < | \gamma^{'}_{n_{t}} | $, and for all $s$ there exists $i$ such that
$ |\gamma^{'}_{s}| = |\gamma^{'}_{n_{i}}|$. Let $ \bar{\gamma}_{i} := |\gamma^{'}_{n_{i}} | $ and $ \bar{\alpha}_{i} := \sum \limits_{s: |\gamma^{'}_{s}| = |\gamma^{'}_{n_{i}}| } \alpha^{'}_{s} $ \footnote{The reader should keep in mind that the value of $t$, that of $\bar{\alpha}_{i}$, and that of $\bar{\gamma}_{i}$ depend on the expression we estimate. For example if we estimate $Y$, then their value depends on $Y$. If we estimate $Z^{'}_{j}$  then their value depends on $Z_{j}^{'}$. Nevertheless, in the sequel, in order to avoid too much notation, we use the same $t$, the same $\bar{\alpha}_{i}$, and the same $\bar{\gamma}_{i}$ for all the expressions that we estimate.}. Then

\begin{equation}
\begin{array}{ll}
Y & \lesssim \left\| \langle  D \rangle^{\alpha} \left( u \partial^{\gamma^{'}} \tilde{g}(|u|^{2}) S_{\gamma^{'}} (u,\bar{u}) \right)  \right\|
_{ L_{t}^{Q} L_{x}^{R} (I)} \prod \limits_{ i \in [1,...,t-1]}  \left\| \langle D \rangle^{\bar{\gamma}_{i}} u \right\|^{\bar{\alpha}_{i}}_{L_{t}^{\bar{Q}_{i}}
L_{x}^{\bar{R}_{i}}(I)} \\
& \left\| \langle D \rangle^{\bar{\gamma}_{t}} u \right\|^{\bar{\alpha}_{t} - 1}_{L_{t}^{\bar{Q}_{t,1}} L_{x}^{\bar{R}_{t,1}}(I)}
  \left\| \langle D \rangle^{\bar{\gamma}_{t}} u \right\|_{L_{t}^{\bar{Q}_{t,2}} L_{x}^{\bar{R}_{t,2}}(I)} , \; \text{and}
\end{array}
\label{Eqn:EstY}
\end{equation}

\begin{equation}
\begin{array}{ll}
Z_{j}^{'} & \lesssim \| u \partial^{\gamma^{'}} \tilde{g}(|u|^{2}) S_{\gamma^{'}} (u,\bar{u})  \|_{L_{t}^{Q'} L_{x}^{R'} (I)}
\left\| \langle D \rangle^{ | \gamma^{'}_{j} |  } u \right\|^{\alpha_{j}^{'}-1}_{L_{t}^{\bar{Q}^{'}_{j,1}} L_{x}^{\bar{R}^{'}_{j,1}} (I)}
\left\| \langle D \rangle^{ | \gamma^{'}_{j} | + \alpha} u \right\|_{L_{t}^{\bar{Q}^{'}_{j,2}} L_{x}^{\bar{R}^{'}_{j,2}} (I)} \\
& \prod \limits_{ i \in [1,..,t-1] } \left\| \langle D \rangle^{\bar{\gamma}_{i}} u \right\|^{\bar{\alpha}_{i}}_{L_{t}^{\bar{Q}^{'}_{i}} L_{x}^{\bar{R}^{'}_{i}} (I)}
\left\| \langle D \rangle^{\bar{\gamma}_{t}} u \right\|^{\bar{\alpha}_{t} -1}_{L_{t}^{\bar{Q}^{'}_{t,1}} L_{x}^{\bar{R}^{'}_{t,1}} (I) }  \left\| \langle D \rangle^{\bar{\gamma}_{t}} u \right\|_{L_{t}^{\bar{Q}^{'}_{t,2}} L_{x}^{\bar{R}^{'}_{t,2}} (I)} \cdot
\end{array}
\label{Eqn:EstZjPrime}
\end{equation}
Here $Q$, $R$, $\bar{Q}_{i}$,  $\bar{R}_{i}$, $\bar{Q}_{t,1}$, $\bar{R}_{t,1}$, $\bar{Q}_{t,2}$, $\bar{R}_{t,2}$, $Q'$, $R'$, $\bar{Q}^{'}_{j,1}$, $\bar{R}^{'}_{j,1}$,
 $\bar{Q}^{'}_{j,2}$, $\bar{R}^{'}_{j,2}$, $\bar{Q}^{'}_{i}$, $\bar{R}^{'}_{i}$, $\bar{Q}^{'}_{t,1}$, $\bar{R}^{'}_{t,1}$, $\bar{Q}^{'}_{t,2}$ and $\bar{R}^{'}_{t,2}$ are numbers  such that $R \neq \infty$, $\bar{R}_{i} \neq \infty$, $\bar{R}_{t,1} \neq \infty$, $\bar{R}_{t,2} \neq \infty$, $R^{'} \neq \infty$, $\bar{R}^{'}_{j,1} \neq \infty$,
 $\bar{R}^{'}_{j,2} \neq \infty$,$\bar{R}_{i}^{'} \neq \infty$,  $\bar{R}_{t,1} \neq \infty$, $\bar{R}_{t,2} \neq \infty$,
 $ \left( \frac{1}{Q}, \frac{1}{R} \right) + \sum \limits_{ i \in [1,...,t-1] } \bar{\alpha}_{i} \left( \frac{1}{_{\bar{Q}_{i}}}, \frac{1}{_{\bar{R}_{i}}} \right)
+ \left( \frac{\bar{\alpha}_{t}- 1}{_{\bar{Q}_{t,1}}} + \frac{1}{_{\bar{Q}_{t,2}}}, \frac{\bar{\alpha}_{t}- 1}{_{\bar{R}_{t,1}}} + \frac{1}{_{\bar{R}_{t,2}}} \right)
 = \frac{n+3}{2(n+1)} (1,1) $ and $  \left( \frac{1}{Q'}, \frac{1}{R'} \right)    +  \left( \frac{\alpha_{j}^{'}-1}{_{\bar{Q}^{'}_{j,1}}} + \frac{1}{_{\bar{Q}^{'}_{j,2}}},
\frac{\alpha_{j}^{'}-1}{_{\bar{R}^{'}_{j,1}}} + \frac{1}{_{\bar{R}^{'}_{j,2}}} \right)
+ \sum \limits_{ i \in [1,..,t-1] } \bar{\alpha}_{i} \left( \frac{1}{_{\bar{Q}^{'}_{i}}}, \frac{1}{_{\bar{R}^{'}_{i}}} \right)
+ \left( \frac{\bar{\alpha}_{t}- 1} {_{\bar{Q}^{'}_{t,1}}} + \frac{1}{_{\bar{Q}^{'}_{t,2}}}, \frac{\bar{\alpha}_{t}- 1}{ _{\bar{R}^{'}_{t,1}}}
+ \frac{1}{_{_{\bar{R}^{'}_{t,2}}}} \right)
= \frac{n+3}{2(n+1)} (1,1) $. Moreover the following properties hold:

\begin{equation}
\text{R.H.S of} \; (\ref{Eqn:EstY}): \;
\left\{
\begin{array}{l}
(a):  \;   \bar{\gamma}_{1} < \bar{\gamma}_{2} < ... < \bar{\gamma}_{t} \\
(b):  \;  \bar{\alpha}_{1} \bar{\gamma}_{1} ... + \bar{\alpha}_{t} \bar{\gamma}_{t} \leq m \\
(c):  \;  \bar{\alpha}_{1} + ... + \bar{\alpha}_{t} = 1_{2}^{*} - 2, \; \text{and}
\end{array}
\right. \text{and}
\label{Eqn:PropY}
\end{equation}

\begin{equation}
\text{R.H.S of }\; (\ref{Eqn:EstZjPrime}): \;
\left\{
\begin{array}{l}
(a):  \;  \bar{\gamma}_{1} < \bar{\gamma}_{2} < ... < \bar{\gamma}_{t}  \\
(b):  \;  \alpha_{j}^{'} |\gamma_{j}^{'}| + \bar{\alpha}_{1} \bar{\gamma}_{1} ... + \bar{\alpha}_{t} \bar{\gamma}_{t} \leq m \\
(c):  \; \alpha_{j}^{'} + \bar{\alpha}_{1} + ... + \bar{\alpha}_{t} = 1_{2}^{*} - 2 .
\end{array}
\right.
\label{Eqn:PropZjPrime}
\end{equation}
Hence $ \bar{\gamma}_{t} \leq \frac{m}{\bar{\alpha}_{t}} $ and $|\gamma^{'}_{j}| \leq \frac{m}{\alpha_{j}^{'}} $. Consequently the following conclusions hold.  Regarding  (\ref{Eqn:PropY}): either $\bar{\gamma}_{t} = m $ and in this case, $t=2$, $\bar{\alpha}_{2}=1 $, and $ \bar{\gamma}_{1} = 0 $; or
$\bar{\gamma}_{i} \leq m-1$ for $1 \leq i \leq t$. Regarding (\ref{Eqn:PropZjPrime}): either $\bar{\gamma}_{t} = m $ and in this case $t \in \{1,2 \}$, $\bar{\alpha}_{t}=1$, and $|\gamma_{j}^{'}| = 0 $; or $ |\gamma^{'}_{j}| = m $ and in this case $t=1$, $\bar{\gamma}_{1}=0$, and $\alpha_{j}^{'} = 1$; or $\bar{\gamma}_{i} \leq m-1$ for $1 \leq i \leq t$ and $ |\gamma_{j}^{'}| \leq m-1 $. \\
\\
\underline{Note}: In the sequel we will implicitly use the conclusions above to prove that some estimates and some embeddings hold \footnote{such as the ones
between ``Then the'' and ``if  $\bar{\gamma}_{t} \neq m $'' below (\ref{Eqn:InterpOneHalf}) }. \\
\\
We first estimate $Z_{j}^{'}$. The elementary estimate
$ \partial^{\gamma^{'}} \tilde{g}(|f|^{2}) S_{\gamma^{'}} (f,\bar{f}) \lesssim g(|f|) \lesssim 1 + |f|^{0+} $ shows that

\begin{equation}
\begin{array}{l}
\| u \partial^{\gamma^{'}} \tilde{g}(|u|^{2}) S_{\gamma^{'}} (u,\bar{u})  \|_{L_{t}^{\frac{2(n+1)}{n-2}} L_{x}^{\frac{2(n+1)}{n-2}} (I)} \\
\lesssim  \| u \|_{L_{t}^{\frac{2(n+1)}{n-2}} L_{x}^{\frac{2(n+1)}{n-2}} (I)}
+ \| u \|_{L_{t}^{\frac{2(n+1)}{n-2}} L_{x}^{\frac{2(n+1)}{n-2}+} (I)} \| u \|^{C}_{L_{t}^{\infty} L_{x}^{1_{2}^{*}} (I) }  \\
\lesssim (\delta')^{c} \langle X_{k - \frac{1}{4}} \left( I,u \right)  \rangle^{C} \cdot
\end{array}
\label{Eqn:LowNormg}
\end{equation}
In the expression above we used the embedding $ H^{k- \frac{1}{4}} \hookrightarrow L^{1_{2}^{*}} $,

\begin{equation}
\begin{array}{ll}
\| u \|_{L_{t}^{\frac{2(n+1)}{n-2}} L_{x}^{\frac{2(n+1)}{n-2}+} (I)} & \lesssim \| u \|^{\theta}_{L_{t}^{\frac{2(n+1)}{n-2}} L_{x}^{\frac{2(n+1)}{n-2}} (I)}
\| u \|^{1- \theta}_{L_{t}^{\frac{2(n+1)}{n-2}} L_{x}^{\frac{2(n+1)}{n-2}++} (I)}  \\
& \lesssim (\delta')^{c} \langle X_{k - \frac{1}{4}} \left( I ,u \right)  \rangle^{C},
\end{array}
\label{Eqn:InterpStrichLittle}
\end{equation}
where at the last line we used the embedding $H^{k - \frac{3}{4},r} \hookrightarrow L^{\frac{2(n+1)}{n-2}++}$ followed by

\begin{equation}
\begin{array}{ll}
\left\| \langle D \rangle^{ k- \frac{3}{4}} u \right\|_{L_{t}^{\frac{2(n+1)}{n-2}} L_{x}^{r} (I)} &
\lesssim \left\| \langle D \rangle^{ k- \frac{3}{4}} u \right\|^{\theta}_{L_{t}^{\frac{2(n+1)}{n-1}} L_{x}^{\frac{2(n+1)}{n-1}} (I)}
\left\| \langle D \rangle^{ k- \frac{3}{4}} u \right\|^{1 - \theta}_{L_{t}^{\infty} L_{x}^{\frac{2n}{n-1}} (I)} \\
& \lesssim \left\| \langle D \rangle^{k- \frac{3}{4}} u \right\|^{\theta}_{L_{t}^{\frac{2(n+1)}{n-1}} L_{x}^{\frac{2(n+1)}{n-1}} (I)}
\| u \|^{1 - \theta}_{L_{t}^{\infty} H^{k- \frac{1}{4}} (I)} \cdot
\end{array}
\label{Eqn:InterpOneHalf}
\end{equation}
In the expression above we used the embedding $H^{k - \frac{1}{4}} \hookrightarrow H^{k - \frac{3}{4}, \frac{2n}{n-1}}$.
Assume that $\bar{\gamma}_{t} \geq |\gamma_{j}^{'}| $. Then the embeddings $ H^{k - \frac{1}{2}, \frac{2(n+1)}{n-1}} \hookrightarrow  H^{\bar{\gamma}_{t} , \frac{2(n+1)}{n-1} } $,$  H^{k - \frac{3}{4}, r} \hookrightarrow H^{\bar{\gamma}_{i},\frac{2(n+1)}{n-2}} $, and
$ H^{k - \frac{3}{4}, r} \hookrightarrow   H^{ |\gamma_{j}^{'}| + \alpha, \frac{2(n+1)}{n-2} }  \hookrightarrow    H^{|\gamma_{j}^{'}|, \frac{2(n+1)}{n-2}} $ hold.
We also have $ H^{k - \frac{3}{4}, r} \hookrightarrow H^{\bar{\gamma}_{t},\frac{2(n+1)}{n-2}} $ if $\bar{\gamma}_{t} \neq m $. Let $( Q',R') := \frac{2(n+1)}{n-2} (1,1) $, $(\bar{Q}^{'}_{j,1}, \bar{R}^{'}_{j,1},\bar{Q}^{'}_{j,2}, \bar{R}^{'}_{j,2},
 \bar{Q}^{'}_{i}, \bar{R}^{'}_{i} ) := \frac{2(n+1)}{n-2} (1,...,1)$, $ \left( \bar{Q}^{'}_{t,1}, \bar{R}^{'}_{t,1} \right) := \frac{2(n+1)}{n-2} (1,1)$ , and
 $ \left( \bar{Q}^{'}_{t,2}, \bar{R}^{'}_{t,2} \right) := \frac{2(n+1)}{n-1} (1,1) $. Hence we get from (\ref{Eqn:EstZjPrime}) that $ Z_{j}^{'} \lesssim \text{R.H.S of} \;  (\ref{Eqn:PropNonLin1}) $. Assume now that $\bar{\gamma}_{t} \leq |\gamma_{j}^{'}| $. Let $( Q',R') := \frac{2(n+1)}{n-2} (1,1) $, $(\bar{Q}^{'}_{j,1}, \bar{R}^{'}_{j,1},
 \bar{Q}^{'}_{i}, \bar{R}^{'}_{i} ) := \frac{2(n+1)}{n-2} (1,...,1)$, $ ( \bar{Q}^{'}_{j,2}, \bar{R}^{'}_{j,2} ) := \frac{2(n+1)}{n-1} (1,1)$, and
 $( Q^{'}_{t,1}, R^{'}_{t,1}, Q^{'}_{t,2}, R^{'}_{t,2} ) := \frac{2(n+1)}{n-2} (1,1,1,1)$. Then $ Z_{j}^{'} \lesssim \text{R.H.S of} \; (\ref{Eqn:PropNonLin1}) $. \\
\\
We then estimate $Y$. We have

\begin{equation}
\begin{array}{l}
\left\| \langle D \rangle^{\alpha} \left( u \partial^{\gamma^{'}} \tilde{g}(|u|^{2}) S_{\gamma^{'}} (u,\bar{u}) \right) \right\|_{L_{t}^{\frac{2(n+1)}{n-2}} L_{x}^{\frac{2(n+1)}{n-2}}(I)} \\
\lesssim  \left\| u \partial^{\gamma^{'}} \tilde{g}(|u|^{2}) S_{\gamma^{'}} (u,\bar{u}) \right\|^{\theta}_{L_{t}^{\frac{2(n+1)}{n-2}} L_{x}^{\frac{2(n+1)}{n-2}} (I)}
\left\| \langle D \rangle^{\alpha +}  \left( u \partial^{\gamma^{'}} \tilde{g}(|u|^{2}) S_{\gamma^{'}} (u,\bar{u}) \right) \right\|^{1- \theta}_{L_{t}^{\frac{2(n+1)}{n-2}} L_{x}^{\frac{2(n+1)}{n-2}} (I)}  \\
\lesssim  (\delta')^{c} \langle X_{k - \frac{1}{4}} \left( I ,u \right) \rangle^{C} \cdot
\end{array}
\nonumber
\end{equation}
In the expression above we used (\ref{Eqn:LowNormg}) and the estimate \\
$ (*): \; \left\| \langle D \rangle^{\alpha +}  \left( u \partial^{\gamma^{'}} \tilde{g}(|u|^{2}) S_{\gamma^{'}} (u,\bar{u}) \right) \right\|_{L_{t}^{\frac{2(n+1)}{n-2}} L_{x}^{\frac{2(n+1)}{n-2}} (I)} \lesssim \langle X_{k- \frac{1}{4}} \left( I,u \right) \rangle^{C} $. In order to derive $(*)$ we proceed as follows. First observe from the definition of the Besov norms in terms of the Paley-Littlewood projectors, a Paley-Littlewood decomposition
into low frequencies and high frequencies, and the H\"older inequality for sequences that \\
$ \left\| \langle D \rangle^{\alpha +}  \left( u \partial^{\gamma^{'}} \tilde{g}(|u|^{2}) S_{\gamma^{'}} (u,\bar{u})  \right)
\right\|_{L_{t}^{\frac{2(n+1)}{n-2}} L_{x}^{\frac{2(n+1)}{n-2}} (I)} \lesssim A + B $ with

\begin{equation}
\begin{array}{l}
A :=  \| u \partial^{\gamma^{'}} \tilde{g}(|u|^{2}) S_{\gamma^{'}} (u,\bar{u}) \|_ {L_{t}^{\frac{2(n+1)}{n-2}} L_{x}^{\frac{2(n+1)}{n-2}} (I)},\; \text{and} \\
B:=  \left\| u \partial^{\gamma^{'}} \tilde{g}(|u|^{2}) S_{\gamma^{'}} (u,\bar{u}) \right\|_{L_{t}^{\frac{2(n+1)}{n-2}} \dot{B}^{\alpha ++}_{\frac{2(n+1)}{n-2},\frac{2(n+1)}{n-2}} (I)} \cdot
\end{array}
\label{Eqn:DefADefB}
\end{equation}
We already know that $ A \lesssim \langle X_{k - \frac{1}{4}} \left( I,u \right) \rangle^{C} $.
So we just need to estimate $B$. Let $ 1 > s > 0$, $p \geq 1$, and $q \geq 1$. Recall (see e.g \cite{bahchem}) that
$\| f \|^{q}_{\dot{B}^{s}_{p,q}} \approx  \int_{\mathbb{R}^{n}} \frac{\| f(x+h) - f(x) \|^{q}_{L^{p}}}{|h|^{n+sq}} \; dh $. By applying the
fundamental theorem of calculus and by proceeding similarly  as in (\ref{Eqn:LowNormg}) we see that
$ \left\| f \partial^{\gamma^{'}} \tilde{g}(|f|^{2}) S_{\gamma^{'}} (f,\bar{f})(x+h) - f \partial^{\gamma^{'}} \tilde{g}(|f|^{2}) S_{\gamma^{'}} (f,\bar{f})(x) \right\|_{L^{p}} \lesssim \left\| f(x+h) - f(x) \right\|_{L^{p}}  + \left\| f(x+h) - f(x) \right\|_{L^{p+}} \| f \|^{C}_{L^{1_{2}^{*}}} $. Hence

\begin{equation}
\begin{array}{ll}
B & \lesssim \left( \| u \|_{L_{t}^{\frac{2(n+1)}{n-2}} \dot{B}^{\alpha ++}_{\frac{2(n+1)}{n-2},\frac{2(n+1)}{n-2}} (I)}
+ \| u \|_{L_{t}^{\frac{2(n+1)}{n-2}} \dot{B}^{\alpha ++ }_{\frac{2(n+1)}{n-2}+,\frac{2(n+1)}{n-2}} (I)} \right)
\langle X_{k - \frac{1}{4}} \left( I ,u \right) \rangle^{C} \\
& \lesssim  \left( \left\| \langle D \rangle^{\alpha +++} u \right\|_{L_{t}^{\frac{2(n+1)}{n-2}} L_{x}^{\frac{2(n+1)}{n-2}} (I)}
+ \left\| \langle D \rangle^{\alpha +++} u \right\|_{L_{t}^{\frac{2(n+1)}{n-2}} L_{x}^{\frac{2(n+1)}{n-2}+} (I)}
\right) \langle X_{k - \frac{1}{4}} \left( I,u \right) \rangle^{C}  \\
& \lesssim \langle X_{k - \frac{1}{4}} \left( I,u \right) \rangle^{C},
\end{array}
\label{Eqn:BesovEst}
\end{equation}
where at the second line we used again a decomposition into low frequencies and high frequencies and at the third line we used the embeddings
$H^{\alpha++,\frac{2(n+1)}{n-2}}, \, H^{\alpha +++, \frac{2(n+1)}{n-2}} \hookrightarrow  H^{k- \frac{3}{4},r} $, and  (\ref{Eqn:InterpOneHalf}). Let $(Q,R) := \frac{2(n+1)}{n-2} (1,1) $
$(\bar{Q}_{i}, \bar{R}_{i}) : = \frac{2(n+1)}{n-2} (1,1)$, $(\bar{Q}_{t,1}, \bar{R}_{t,1}) := \frac{2(n+1)}{n-2}(1,1)$, and
$(\bar{Q}_{t,2}, \bar{R}_{t,2}) :=  \frac{2(n+1)}{n-1} (1,1) $. Then combining the estimates above with similar embeddings as those below
(\ref{Eqn:InterpOneHalf}) we see that $Y \lesssim  \text{R.H.S of} \;  (\ref{Eqn:PropNonLin1}) $.  \\ \\
Assume that $n=3$ (resp. $n=4$) and $ m \geq 5$ (resp. $ m \geq 3 )$. Let $ Min := \min \left( |\gamma_{1}|,..., |\gamma_{p}|, | \bar{\gamma}_{1}|,..., | \bar{\gamma}_{p'}|  \right) $. Either there exists $\tilde{i} \in \{1,...,p \} $ such that
$ |\gamma_{\tilde{i}}| = Min $ or there exists $\tilde{i} \in \{1,..., p' \} $ such that $ |\bar{\gamma}_{\tilde{i}}| =  Min $. We may assume WLOG that
the first scenario occurs. Reordering the $\gamma_{i}$ s and the $\bar{\gamma}_{i}$ s if necessary, we may assume WLOG that $|\gamma_{1}| = Min $. Observe from $\alpha_{1} |\gamma_{1}| + ... + \alpha_{p} |\gamma_{p}| + \bar{\alpha}_{1} |\bar{\gamma}_{1}| +...
 + \bar{\alpha}_{p'} |\bar{\gamma}_{p'}| \leq m $ and from $\alpha_{1} +... + \alpha_{p} + \bar{\alpha}_{1}+...+ \bar{\alpha}_{p'} = 1_{2}^{*} -1 $ that

\begin{equation}
\begin{array}{l}
|\gamma_{1} | \leq \frac{m}{1_{2}^{*} - 1}:
\end{array}
\label{Eqn:Boundgamma1}
\end{equation}
we will use (\ref{Eqn:Boundgamma1}) in the sequel to control norms involving $\partial^{\gamma_{1}} u $. Then we see from (\ref{Eqn:LeibnProd}) that $ \| \langle D \rangle^{\alpha} \partial^{\gamma} X \|_{L_{t}^{\frac{2(n+1)}{n+3}} L_{x}^{\frac{2(n+1)}{n+3}} (I)} $ is bounded by a finite sum of terms of the form

\begin{equation}
\begin{array}{ll}
Y_{a} & := \left\| \langle D \rangle^{\alpha} \left( \partial^{\gamma^{'}} \tilde{g}(|u|^{2}) S_{\gamma^{'}} (u,\bar{u}) \partial^{\gamma_{1}} u \right)  \right\|_{L_{t}^{\frac{2(n+1)}{n-2}} L_{x}^{\frac{2(n+1)}{n-2}} (I)}
\prod \limits_{  s \in [1,...,p+p'] }  \left\| \langle D \rangle^{|\gamma_{s}^{'}|} u \right\|^{\alpha^{'}_{s}}_{L_{t}^{Q_{s}} L_{x}^{R_{s}} (I)},  \\
Z^{'}_{j,a} & :=   \left\|  \partial^{\gamma^{'}} \tilde{g}(|u|^{2}) S_{\gamma^{'}} (u,\bar{u}) \partial^{\gamma_{1}} u  \right\|_{L_{t}^{\frac{2(n+1)}{n-2}}
L_{x}^{\frac{2(n+1)}{n-2}} (I)}
\left\|  \langle D \rangle^{|\gamma^{'}_{j}|} u  \right\|^{\alpha^{'}_{j}-1}_{L_{t}^{Q^{'}_{j,1}} L_{x}^{R^{'}_{j,1}}(I)}
\left\|  \langle D \rangle^{|\gamma^{'}_{j}| + \alpha} u  \right\|_{L_{t}^{Q^{'}_{j,2}} L_{x}^{R^{'}_{j,2}}(I)} \\
&  \prod \limits_{ \substack{ s \in [1,..,p+p']  \\ s \neq j } } \left\| \langle D \rangle^{|\gamma^{'}_{s}|} u  \right\|^{\alpha_{s}^{'}}_{
L_{t}^{Q^{'}_{s}} L_{x}^{R^{'}_{s}} (I)},
\end{array}
\label{Eqn:EstYaPrimeaux}
\end{equation}

\begin{equation}
\begin{array}{ll}
Y_{b} & := \left\| \langle D \rangle^{\alpha} \left( X^{''} \partial^{\tilde{\gamma}^{'}} \tilde{g}(|u|^{2}) \right) \right\|_{L_{t}^{\infty}
L_{x}^{\infty-} (I)}
\prod \limits_{  s \in [1,...,p+p'] }  \left\| \langle D \rangle^{|\gamma_{s}^{'}|} u \right\|^{\alpha^{'}_{s}}_{L_{t}^{Q_{s}} L_{x}^{R_{s}} (I)}, \; \text{and} \\
Z^{'}_{j,b} & :=   \| X^{''} \partial^{\tilde{\gamma}^{'}} \tilde{g}(|u|^{2}) \|_{ L_{t}^{\infty} L_{x}^{\infty} (I)}
\left\|  \langle D \rangle^{|\gamma^{'}_{j}|} u  \right\|^{\alpha^{'}_{j}-1}_{L_{t}^{Q^{'}_{j,1}} L_{x}^{R^{'}_{j,1}}(I)}
\left\|  \langle D \rangle^{|\gamma^{'}_{j}| + \alpha} u  \right\|_{L_{t}^{Q^{'}_{j,2}} L_{x}^{R^{'}_{j,2}}(I)} \\
&  \prod \limits_{ \substack{ s \in [1,..,p+p']  \\ s \neq j } } \left\| \langle D \rangle^{|\gamma^{'}_{s}|} u  \right\|^{\alpha_{s}^{'}}_{
L_{t}^{Q^{'}_{s}} L_{x}^{R^{'}_{s}} (I)} \cdot
\end{array}
\label{Eqn:EstYbPrimejAux}
\end{equation}
Here $j \in \{ 1,...,p + p^{'} \}$. We rewrite the text starting with ``In the expression above we define $\gamma^{'}_{s}$ ""
and finishing with `` $  \left( \gamma^{'}_{s}, \alpha_{s}^{'} \right) := \left( \bar{\gamma}_{s-p}, \bar{\alpha}_{s-p} \right) $. '' just below
(\ref{Eqn:DefYZPrimej}), replacing ``$\gamma^{'}_{1} := (0,...,0)$ '' with `` $ \gamma^{'} := \gamma_{1} $ '' \footnote{Observe that $\gamma_{1}$ is not necessarily equal to
$(0,...,0)$}. Here the $Q$ and $R$ are numbers to be chosen that  satisfy the following constraints: $R_{s} \neq \infty$, $R^{'}_{j,1} \neq \infty$, $R^{'}_{j,2} \neq \infty $,
$R^{'}_{s} \neq \infty$, and

\begin{equation}
\begin{array}{l}
Y_{a} : \; \;  \frac{n-2}{2(n+1)} (1,1) + \sum \limits_{ s \in [1,..,p+p^{'}]} \alpha_{s}^{'} \left( \frac{1}{Q_{s}}, \frac{1}{R_{s}}\right)  = \frac{n+3}{2(n+1)} (1,1) \\
Y_{b} : \; \; \left( \frac{1}{\infty-}, 0 \right) + \sum \limits_{ s \in [1,..,p+p^{'}]} \alpha_{s}^{'} \left( \frac{1}{Q_{s}}, \frac{1}{R_{s}}\right)  = \frac{n+3}{2(n+1)} (1,1) \\
Z^{'}_{j,a} : \; \; \frac{n-2}{2(n+1)} (1,1) + (\alpha_{j}^{'} - 1) \left( \frac{1}{Q^{'}_{j,1}}, \frac{1}{R^{'}_{j,1}} \right)
+ \left( \frac{1}{Q^{'}_{j,2}}, \frac{1}{R^{'}_{j,2}} \right)
+ \sum \limits_{ \substack{ s \in [1,..,p+p']  \\ s \neq j } } \alpha_{s}^{'} \left( \frac{1}{Q_{s}^{'}}, \frac{1}{R_{s}^{'}} \right) = \frac{n+3}{2(n+1)} (1,1) \\
Z^{'}_{j,b} : \; \;   (\alpha_{j}^{'} - 1) \left( \frac{1}{Q^{'}_{j,1}}, \frac{1}{R^{'}_{j,1}} \right)
+ \left( \frac{1}{Q^{'}_{j,2}}, \frac{1}{R^{'}_{j,2}} \right)
+ \sum \limits_{ \substack{ s \in [1,..,p+p']  \\ s \neq j } } \alpha_{s}^{'} \left( \frac{1}{Q_{s}^{'}}, \frac{1}{R_{s}^{'}} \right) = \frac{n+3}{2(n+1)} (1,1)
\end{array}
\nonumber
\end{equation}
Observe that $\left(  \gamma_{1}^{'},..., \gamma^{'}_{p + p^{'}} \right)$ and $ \left( \alpha_{1}^{'},...,\alpha_{p+p'} \right)$ satisfy
$ \alpha_{1}^{'} |\gamma_{1}^{'}| + ... + \alpha^{'}_{p+p'} |\gamma_{p+p'}^{'}| \leq m $. Moreover $ \alpha_{1}^{'} +...+ \alpha^{'}_{p+p} = 1_{2}^{*} - 2 $ if we deal with $Y_{a}$ and $Z^{'}_{j,a}$, and $ \alpha_{1}^{'} +...+ \alpha^{'}_{p+p} = 1_{2}^{*} - 1 $ if we deal with $Y_{b}$ and $Z^{'}_{j,b}$. We rewrite the text starting with ``By collecting the $\gamma_{s}^{'}$ '' and finishing with ``Then'' just above (\ref{Eqn:EstY}) to get

\begin{equation}
\begin{array}{ll}
Y_{a} & \lesssim \left\| \langle D \rangle^{\alpha} \left( \partial^{\gamma^{'}} \tilde{g}(|u|^{2}) S_{\gamma^{'}} (u,\bar{u}) \partial^{\gamma_{1}} u \right)  \right\|_{L_{t}^{\frac{2(n+1)}{n-2}} L_{x}^{\frac{2(n+1)}{n-2}} (I)}
\prod \limits_{i \in [1,...,t-1]}  \left\| \langle D \rangle^{\bar{\gamma}_{i}} u \right\|^{\bar{\alpha}_{i}}_{L_{t}^{\bar{Q}_{i}}
L_{x}^{\bar{R}_{i}}(I)} \\
&  \left\| \langle D \rangle^{\bar{\gamma}_{t}} u \right\|^{\bar{\alpha}_{t} - 1}_{L_{t}^{\bar{Q}_{t,1}} L_{x}^{\bar{R}_{t,1}}(I)}
  \left\| \langle D \rangle^{\bar{\gamma}_{t}} u \right\|_{L_{t}^{\bar{Q}_{t,2}} L_{x}^{\bar{R}_{t,2}}(I)}, \\
&  \\
Y_{b} & \lesssim  \left\| \langle D \rangle^{\alpha} \left( X^{''} \partial^{\tilde{\gamma}^{'}} \tilde{g}(|u|^{2}) \right) \right\|_{L_{t}^{\infty}
L_{x}^{\infty-} (I)}
\prod \limits_{i \in [1,...,t-1]}  \left\| \langle D \rangle^{\bar{\gamma}_{i}} u \right\|^{\bar{\alpha}_{i}}_{L_{t}^{\bar{Q}_{i}}
L_{x}^{\bar{R}_{i}}(I)} \\
&  \left\| \langle D \rangle^{\bar{\gamma}_{t}} u \right\|^{\bar{\alpha}_{t} - 1}_{L_{t}^{\bar{Q}_{t,1}} L_{x}^{\bar{R}_{t,1}}(I)}
  \left\| \langle D \rangle^{\bar{\gamma}_{t}} u \right\|_{L_{t}^{\bar{Q}_{t,2}} L_{x}^{\bar{R}_{t,2}}(I)},
\end{array}
\label{Eqn:EstYAux}
\end{equation}

\begin{equation}
\begin{array}{ll}
Z^{'}_{j,a} & \lesssim   \left\|  \partial^{\gamma^{'}} \tilde{g}(|u|^{2}) S_{\gamma^{'}} (u,\bar{u}) \partial^{\gamma_{1}} u  \right\|_{L_{t}^{\frac{2(n+1)}{n-2}}
L_{x}^{\frac{2(n+1)}{n-2}} (I)}
\left\| \langle D \rangle^{ | \gamma^{'}_{j} |  } u \right\|^{\alpha_{j}^{'}-1}_{L_{t}^{\bar{Q}^{'}_{j,1}} L_{x}^{\bar{R}^{'}_{j,1}} (I)}
\left\| \langle D \rangle^{ | \gamma^{'}_{j} | + \alpha} u \right\|_{L_{t}^{\bar{Q}^{'}_{j,2}} L_{x}^{\bar{R}^{'}_{j,2}} (I)} \\
& \prod \limits_{i \in [1,..,t-1] } \left\| \langle D \rangle^{\bar{\gamma}_{i}} u \right\|^{\bar{\alpha}_{i}}_{L_{t}^{\bar{Q}^{'}_{i}} L_{x}^{\bar{R}^{'}_{i}} (I)}
\left\| \langle D \rangle^{|\bar{\gamma}_{t}|} u \right\|^{\bar{\alpha}_{t} -1}_{L_{t}^{\bar{Q}^{'}_{t,1}} L_{x}^{\bar{R}^{'}_{t,1}} (I) }  \left\| \langle D \rangle^{|\bar{\gamma}_{t}|} u \right\|_{L_{t}^{\bar{Q}^{'}_{t,2}} L_{x}^{\bar{R}^{'}_{t,2}} (I)}, \; \text{and} \\
Z^{'}_{j,b} & \lesssim  \left\|  X^{''} \partial^{\tilde{\gamma}^{'}} \tilde{g}(|u|^{2}) \right\|_{L_{t}^{\infty} L_{x}^{\infty} (I)}
\left\| \langle D \rangle^{ | \gamma^{'}_{j} |  } u \right\|^{\alpha_{j}^{'}-1}_{L_{t}^{\bar{Q}^{'}_{j,1}} L_{x}^{\bar{R}^{'}_{j,1}} (I)}
\left\| \langle D \rangle^{ | \gamma^{'}_{j} | + \alpha} u \right\|_{L_{t}^{\bar{Q}^{'}_{j,2}} L_{x}^{\bar{R}^{'}_{j,2}} (I)} \\
& \prod \limits_{i \in [1,..,t-1] } \left\| \langle D \rangle^{\bar{\gamma}_{i}} u \right\|^{\bar{\alpha}_{i}}_{L_{t}^{\bar{Q}^{'}_{i}} L_{x}^{\bar{R}^{'}_{i}} (I)} \left\| \langle D \rangle^{\bar{\gamma}_{t}} u \right\|^{\bar{\alpha}_{t} -1}_{L_{t}^{\bar{Q}^{'}_{t,1}} L_{x}^{\bar{R}^{'}_{t,1}} (I) }
\left\| \langle D \rangle^{\bar{\gamma}_{t}} u \right\|_{L_{t}^{\bar{Q}^{'}_{t,2}} L_{x}^{\bar{R}^{'}_{t,2}} (I)} \; \cdot
\end{array}
\label{Eqn:ZjPrimeAux}
\end{equation}
Here the $\bar{Q}$,  $\bar{R}$, $\bar{Q}^{'}$, and $\bar{R}^{'}$  numbers satisfy the following constraints: $\bar{R}_{i} \neq \infty$,
$\bar{R}_{t,1} \neq \infty$, $\bar{R}_{t,2} \neq \infty$, $\bar{R}^{'}_{j,1} \neq \infty$, $\bar{R}^{'}_{j,2} \neq \infty$,
$ \bar{R}^{'}_{i} \neq \infty$, $\bar{R}^{'}_{t,1} \neq \infty$, $\bar{R}^{'}_{t,2} \neq \infty$ and

\begin{equation}
\begin{array}{ll}
Y_{a}: & \frac{n-2}{2(n+1)} (1,1) + \sum \limits_{i=1}^{t-1} \bar{\alpha}_{i} \left(  \frac{1}{_{\bar{Q}_{i}}}, \frac{1}{_{\bar{R}_{i}}} \right)
+ (\bar{\alpha}_{t} - 1) \left( \frac{1}{_{\bar{Q}_{t,1}}}, \frac{1}{_{\bar{R}_{t,1}}} \right) + \left( \frac{1}{_{\bar{Q}_{t,2}}}, \frac{1}{_{\bar{R}_{t,2}}} \right) = \frac{n+3}{2(n+1)} (1,1), \\
Y_{b}: &  \left( 0, \frac{1}{\infty-} \right) + \sum \limits_{i=1}^{t-1} \bar{\alpha}_{i} \left( \frac{1}{_{\bar{Q}_{i}}}, \frac{1}{_{\bar{R}_{i}}} \right)
+ (\bar{\alpha}_{t} - 1) \left( \frac{1}{_{\bar{Q}_{t,1}}}, \frac{1}{_{\bar{R}_{t,1}}} \right) + \left( \frac{1}{_{\bar{Q}_{t,2}}}, \frac{1}{_{\bar{R}_{t,2}}} \right)
= \frac{n+3}{2(n+1)} (1,1), \\
Z^{'}_{j,a}: & \frac{n-2}{2(n+1)} (1,1) +  (\alpha_{j}^{'} - 1) \left( \frac{1}{_{\bar{Q}_{j,1}^{'}}},  \frac{1}{_{\bar{R}_{j,1}^{'}}}  \right)
+ \left(  \frac{1}{_{\bar{Q}_{j,2}^{'}}},  \frac{1}{_{\bar{R}_{j,2}^{'}}}  \right) + \sum \limits_{ i \in [1,..,t-1] }
\bar{\alpha}_{i} \left(  \frac{1}{_{\bar{Q}^{'}_{i}}}, \frac{1}{_{\bar{R}^{'}_{i}}} \right) \\
& + (\bar{\alpha}_{t} - 1) \left( \frac{1}{_{\bar{Q}^{'}_{t,1}}}, \frac{1}{_{\bar{R}^{'}_{t,1}}} \right)
+ \left( \frac{1}{_{\bar{Q}^{'}_{t,2}}}, \frac{1}{_{\bar{R}^{'}_{t,2}}} \right) = \frac{n+3}{2(n+1)} (1,1), \; \text{and} \\
Z^{'}_{j,b} : & (\alpha_{j}^{'} - 1) \left( \frac{1}{_{\bar{Q}_{j,1}^{'}}},  \frac{1}{_{\bar{R}_{j,1}^{'}}}  \right)
+ \left(  \frac{1}{_{\bar{Q}_{j,2}^{'}}},  \frac{1}{_{\bar{R}_{j,2}^{'}}}  \right) + \sum \limits_{i \in [1,..,t-1]}
\bar{\alpha}_{i} \left(  \frac{1}{_{\bar{Q}^{'}_{i}}}, \frac{1}{_{\bar{R}^{'}_{i}}} \right) \\
& + (\bar{\alpha}_{t} - 1) \left( \frac{1}{_{\bar{Q}^{'}_{t,1}}}, \frac{1}{_{\bar{R}^{'}_{t,1}}} \right)
+ \left( \frac{1}{_{\bar{Q}^{'}_{t,2}}}, \frac{1}{_{\bar{R}^{'}_{t,2}}} \right) = \frac{n+3}{2(n+1)} (1,1) \cdot
\end{array}
\label{Eqn:CdtionsYaYbZpj}
\end{equation}
We then rewrite the text just above (\ref{Eqn:PropY}) starting  with ``Moreover the following properties hold: '' and finishing with ``$ |\gamma^{'}_{j}| \leq m - 1$. '', replacing the condition  ``$(c): \bar{\alpha}_{1} + ... + \bar{\alpha}_{t} = 1_{2}^{*}-2$'' ( resp.``$(c): \alpha_{j}^{'} + \bar{\alpha}_{1} + ... + \bar{\alpha}_{t} = 1_{2}^{*}-2$ '') with ``$(c): \bar{\alpha}_{1} + ... + \bar{\alpha}_{t} = 1_{2}^{*}-1$'' (resp. ``$(c): \alpha_{j}^{'} + \bar{\alpha}_{1} + ... + \bar{\alpha}_{t} = 1_{2}^{*}-1$'') when we deal with $Y_{b}$ (resp. $Z^{'}_{j,b}$). In particular the same conclusions as those one below (\ref{Eqn:PropZjPrime}) hold. \\
\\
\underline{Note}: In the sequel we will implicitly use these conclusions (see Note above (\ref{Eqn:LowNormg})). \\
\\
We first estimate $Y_{a}$ and $Z^{'}_{j,a}$. We have

\begin{equation}
\begin{array}{l}
\left\| \langle D \rangle^{\alpha} \left( \partial^{\gamma^{'}} \tilde{g}(|u|^{2}) S_{\gamma^{'}} (u,\bar{u}) \partial^{\gamma_{1}} u \right)  \right\|_{L_{t}^{\frac{2(n+1)}{n-2}} L_{x}^{\frac{2(n+1)}{n-2}} (I)}  \\
\lesssim \left\|  \partial^{\gamma^{'}} \tilde{g}(|u|^{2}) S_{\gamma^{'}} (u,\bar{u}) \partial^{\gamma_{1}} u   \right\|^{\theta}_{L_{t}^{\frac{2(n+1)}{n-2}} L_{x}^{\frac{2(n+1)}{n-2}} (I)}  \left\| \langle D \rangle \left( \partial^{\gamma^{'}} \tilde{g}(|u|^{2}) S_{\gamma^{'}} (u,\bar{u}) \partial^{\gamma_{1}} u \right)  \right\|^{1- \theta}_{L_{t}^{\frac{2(n+1)}{n-2}} L_{x}^{\frac{2(n+1)}{n-2}} (I)}    \\
\lesssim A + A^{\theta} B^{1- \theta} \cdot
\end{array}
\nonumber
\end{equation}
In the expression above $ A :=  \left\| \partial^{\gamma^{'}} \tilde{g}(|u|^{2}) S_{\gamma^{'}} (u,\bar{u}) \partial^{\gamma_{1}} u  \right\|_{L_{t}^{\frac{2(n+1)}{n-2}} L_{x}^{\frac{2(n+1)}{n-2}} (I)} $, \\ and
$ B := \left\| \nabla \left( \partial^{\gamma^{'}} \tilde{g}(|u|^{2}) S_{\gamma^{'}} (u,\bar{u}) \partial^{\gamma_{1}} u  \right)
\right\|_{L_{t}^{\frac{2(n+1)}{n-2}} L_{x}^{\frac{2(n+1)}{n-2}} (I)} $. Proceeding similarly as in (\ref{Eqn:LowNormg}) we get

\begin{equation}
\begin{array}{l}
A  \lesssim \| \partial^{\gamma_{1}} u  \|_{L_{t}^{\frac{2(n+1)}{n-2}} L_{x}^{\frac{2(n+1)}{n-2}} (I)}
+  \| \partial^{\gamma_{1}} u  \|_{L_{t}^{\frac{2(n+1)}{n-2}} L_{x}^{\frac{2(n+1)}{n-2}+} (I)} \| u \|^{C}_{L_{t}^{\infty} L_{x}^{1_{2}^{*}} (I)} \lesssim (\delta')^{c} \langle X_{k- \frac{1}{4}} \left( I ,u \right) \rangle^{C} \cdot
\end{array}
\nonumber
\end{equation}
In the expression above we used the boundedness of the Riesz transforms to get

\begin{equation}
\begin{array}{ll}
\| \partial^{\gamma_{1}} u  \|_{L_{t}^{\frac{2(n+1)}{n-2}} L_{x}^{\frac{2(n+1)}{n-2}} (I)} & \lesssim
\| D^{|\gamma_{1}|} u  \|_{L_{t}^{\frac{2(n+1)}{n-2}} L_{x}^{\frac{2(n+1)}{n-2}} (I)} \\
& \lesssim  \| u \|^{\theta}_{L_{t}^{\frac{2(n+1)}{n-2}} L_{x}^{\frac{2(n+1)}{n-2}} (I)}
\| D^{ |\gamma_{1}| + } u  \|^{1- \theta}_{L_{t}^{\frac{2(n+1)}{n-2}} L_{x}^{\frac{2(n+1)}{n-2}} (I)} \\
& \lesssim (\delta')^{c} \langle X_{k- \frac{1}{4}} (I,u) \rangle^{C} \cdot
\end{array}
\label{Eqn:EstPartialGamma1}
\end{equation}
Here we used $ \| D^{|\gamma_{1}|+} f  \|_{L^{\frac{2(n+1)}{n-2}}}
\lesssim \left\| \langle D \rangle^{k- \frac{3}{4}} f \right\|_{L^{r}} $ $($\text{that follows from
(\ref{Eqn:Boundgamma1})}$)$ and (\ref{Eqn:InterpOneHalf}). We also used

\begin{equation}
\begin{array}{ll}
\| \partial^{\gamma_{1}} u  \|_{L_{t}^{\frac{2(n+1)}{n-2}} L_{x}^{\frac{2(n+1)}{n-2}+} (I)} & \lesssim
\| D^{|\gamma_{1}|} u  \|_{ L_{t}^{\frac{2(n+1)}{n-2}} L^{\frac{2(n+1)}{n-2}+} (I)} \\
& \lesssim \| D^{|\gamma_{1}|} u  \|^{\theta}_{ L_{t}^{\frac{2(n+1)}{n-2}} L_{x}^{\frac{2(n+1)}{n-2}} (I)}
\| D^{|\gamma_{1}|} u  \|^{1 - \theta}_{ L_{t}^{\frac{2(n+1)}{n-2}}  L_{x}^{\frac{2(n+1)}{n-2}++} (I)} \\
& \lesssim (\delta')^{c} \langle X_{k- \frac{1}{4}} (I,u) \rangle^{C},
\end{array}
\label{Eqn:EstPartialGamma2}
\end{equation}
using at the last line the embedding
$ \| D^{|\gamma_{1}|} f  \|_{L^{\frac{2(n+1)}{n-2}++}} \lesssim \left\| \langle D \rangle^{k- \frac{3}{4}} f \right\|_{L^{r}} $. \\
We have $ B \lesssim B_{1} + B_{2} + B_{3} $ with
$ B_{1} := \left\|  \partial^{\gamma^{'}} \tilde{g}(|u|^{2}) S_{\gamma^{'}}(u,\bar{u}) \nabla \partial^{\gamma_{1}} u
\right\|_{L_{t}^{\frac{2(n+1)}{n-2}} L_{x}^{\frac{2(n+1)}{n-2}} (I)} $, \\
$ B_{2} := \left\| \partial^{\gamma' + 1} \tilde{g}(|u|^{2}) S_{\gamma'}(u,\bar{u}) u \nabla u  \partial^{\gamma_{1}} u \right\|_{L_{t}^{\frac{2(n+1)}{n-2}} L_{x}^{\frac{2(n+1)}{n-2}} (I)}$, \\
 $ B_{3} :=  \left\| \partial^{\gamma'} \tilde{g}(|u|^{2}) S_{\gamma'-1}(u,\bar{u}) \nabla u  \partial^{\gamma_{1}} u \right\|_{L_{t}^{\frac{2(n+1)}{n-2}} L_{x}^{\frac{2(n+1)}{n-2}} (I)} $ if $\gamma^{'} \neq 1$, and terms that are similar to $B_{1}$, $B_{2}$, and $B_{3}$. Proceeding similarly as in
 (\ref{Eqn:LowNormg}) we get

\begin{equation}
\begin{array}{ll}
B_{1} & \lesssim \left\| \nabla \partial^{\gamma_{1}} u \right\| _{ L_{t}^{\frac{2(n+1)}{n-2}} L_{x}^{\frac{2(n+1)}{n-2}} (I)}
 + \left\| \nabla \partial^{\gamma_{1}} u \right\|_{L_{t}^{\frac{2(n+1)}{n-2}} L_{x}^{\frac{2(n+1)}{n-2}+} (I)}
 \| u \|^{C}_{L_{t}^{\infty} L_{x}^{1_{2}^{*}} (I)} \\
 & \lesssim \langle X_{k-\frac{1}{4}} (I,u) \rangle^{C} \cdot
\end{array}
\nonumber
\end{equation}
In the expression above we used at the last line the embedding
$ \left\| \nabla \partial^{\gamma_{1}} u \right\|_{L_{t}^{\frac{2(n+1)}{n-2}} L_{x}^{p} (I)} \lesssim  \left\| \langle D \rangle^{|\gamma_{1}| + 1}  u \right\|_{L_{t}^{\frac{2(n+1)}{n-2}} L_{x}^{p} (I)}
\lesssim  \left\| \langle D \rangle^{k - \frac{3}{4}}  u \right\|_{L_{t}^{\frac{2(n+1)}{n-2}} L_{x}^{r} (I)}
$ for $p \in \left\{ \frac{2(n+1)}{n-2}, \frac{2(n+1)}{n-2}+ \right\} $. We only estimate $B_{2}$: $B_{3}$ is estimated similarly. Embeddings and the estimate $\left| \partial^{\gamma^{'} +1} \tilde{g}(|u|^{2})  S_{\gamma'}(u,\bar{u}) u \right| \lesssim 1 $ show that

\begin{equation}
\begin{array}{ll}
B_{2} & \lesssim \| \nabla u \|_{L_{t}^{\infty} L_{x}^{\infty} (I)}
\left\| \partial^{\gamma_{1}} u  \right\|_{L_{t}^{\frac{2(n+1)}{n-2}} L_{x}^{\frac{2(n+1)}{n-2}} (I)} \\
 & \lesssim \| u \|_{L_{t}^{\infty} H^{k-\frac{1}{4}} (I)}
 \left\| \langle D \rangle^{k- \frac{3}{4}} u \right\|_{ L_{t}^{\frac{2(n+1)}{n-2}} L_{x}^{r} (I)} \\
 & \lesssim  \langle X_{k - \frac{1}{4}} (I,u) \rangle^{C} \cdot
\end{array}
\nonumber
\end{equation}
If we deal with $Z_{j,a}^{'}$ then
there are again two options: if $ | \gamma_{j}^{'} | \geq |\bar{\gamma}_{t}| $ then
 $ ( \bar{Q}^{'}_{i}, \bar{R}^{'}_{i} ) := \frac{2(n+1)}{n-2} (1,1) $,
 $ ( \bar{Q}^{'}_{t,1}, \bar{R}^{'}_{t,1},\bar{Q}^{'}_{t,2}, \bar{R}^{'}_{t,2}) := \frac{2(n+1)}{n-2} (1,1,1,1) $,
 $ ( \bar{Q}^{'}_{j,1}, \bar{R}^{'}_{j,1} ) := \frac{2(n+1)}{n-2} (1,1) $ and
 $ ( \bar{Q}^{'}_{j,2}, \bar{R}^{'}_{j,2} ) := \frac{2(n+1)}{n-1} (1,1) $; if
 $ | \gamma_{j}^{'} | \leq |\bar{\gamma}_{t}| $ then
 $ ( \bar{Q}^{'}_{j,1}, \bar{R}^{'}_{j,1}, \bar{Q}^{'}_{j,2}, \bar{R}^{'}_{j,2}) := \frac{2(n+1)}{n-2} (1,1,1,1) $,
 $ ( \bar{Q}^{'}_{i}, \bar{R}^{'}_{i}, \bar{Q}^{'}_{t,1}, \bar{R}^{'}_{t,1}  ) := \frac{2(n+1)}{n-2} (1,1,1,1) $ and
 $ ( \bar{Q}^{'}_{t,2}, \bar{R}^{'}_{t,2} ) := \frac{2(n+1)}{n-1} (1,1) $. If we deal with $Y_{a}$ then we choose
$ ( \bar{Q}_{i}, \bar{R}_{i} ) := \frac{2(n+1)}{n-2} (1,1) $, $(\bar{Q}_{t,1}, \bar{R}_{t,1}) :=  \frac{2(n+1)}{n-2} (1,1)$, and
$(\bar{Q}_{t,2}, \bar{R}_{t,2}) :=  \frac{2(n+1)}{n-1} (1,1)$. Hence, combining the above estimates with similar embeddings as those (\ref{Eqn:LowNormg}),  $Y_{a} + Z_{j,a}^{'} \lesssim  (\delta')^{c}
 \langle X_{k- \frac{1}{4}} \left( I,u \right) \rangle^{C} $. \\
We then estimate $Y_{b}$ and $Z^{'}_{j,b}$. Let $ \breve{r} := \infty- $ if $ \alpha > 0 $ and
$\breve{r} := \infty $ if $\alpha=0$. We claim that

\begin{equation}
\begin{array}{l}
\left\| \langle D \rangle^{\alpha} \left( X^{''} \partial^{\tilde{\gamma}^{'}} \tilde{g}(|u|^{2}) \right) \right\|_{L_{t}^{\infty} L_{x}^{\breve{r}} ([0,T_{l}])} \lesssim \langle X_{k- \frac{1}{4}} \left( I,u \right) \rangle^{C} \cdot
\end{array}
\nonumber
\end{equation}
Indeed, we may assume WLOG that $\tilde{p} \in \mathbb{N}^{*}$.  We see from (\ref{Eqn:LeibnProd}), and H\"older inequality that it suffices to prove that the three embeddings hold: $ i \in \{1,...,\tilde{p} \}: \;  \left\| \langle D \rangle^{\alpha} \partial^{\delta_{i}} f \right\|_{L^{\breve{r}}} \lesssim \| f \|_{H^{k- \frac{1}{4}}} $,
$ i \in \{1,...,\tilde{p}^{'} \}: \;  \left\| \langle D \rangle^{\alpha} \partial^{\bar{\delta}_{i}} f \right\|_{L^{\breve{r}}} \lesssim \| f \|_{H^{k- \frac{1}{4}}} $, and $ \left\| \langle D \rangle^{\alpha} \left(  \partial^{\tilde{\gamma}^{'}} \tilde{g} (|f|^{2}) \partial^{\delta_{1}} f  \right) \right\|_{L^{\breve{r}}} \lesssim \langle \| f \|_{H^{k- \frac{1}{4}}} \rangle^{C}$. Clearly the first two
embedding hold, in view of (\ref{Eqn:Estm}). If $\alpha = 0$ then the last embedding follows from the estimate
$ (\triangle): \; \left\| \partial^{\tilde{\gamma}^{'}} \tilde{g}(|f|^{2}) \partial^{\delta_{1}} f  \right\|_{L^{\infty}} \lesssim \| \partial^{\delta_{1}} f \|_{L^{\infty}} \lesssim
\| f \|_{H^{k- \frac{1}{4}}} $. Assume now that $\alpha \neq 0$. Then we use similar arguments as those between ``In the expression above'' and (\ref{Eqn:BesovEst}) to conclude. More precisely $ \left\| \langle D \rangle^{\alpha} \left( \partial^{\tilde{\gamma}^{'}} \tilde{g} (|f|^{2})  \right) \partial^{\delta_{1}} f  \right\|_{L^{\infty-}} \lesssim A + B $ with $ A := \left\| \partial^{\tilde{\gamma}^{'}} \tilde{g} (|f|^{2}) \partial^{\delta_{1}} f  \right\|_{L^{\infty-}} $ and $ B := \left\| \partial^{\tilde{\gamma}^{'}} \tilde{g} (|f|^{2}) \partial^{\delta_{1}} f  \right\|_{\dot{B}^{\alpha+}_{\infty-, \infty-}} $; a straightforward modification of $(\triangle)$ shows that $A \lesssim  \langle \| f \|_{H^{k- \frac{1}{4}}} \rangle^{C} $; the fundamental theorem of calculus yields

\begin{equation}
\begin{array}{l}
\left| \partial^{\tilde{\gamma}^{'}} \tilde{g} (|f|^{2}) \partial^{\delta_{1}} f(x+h) - \partial^{\tilde{\gamma}^{'}} \tilde{g} (|f|^{2})
\partial^{\delta_{1}} f (x) \right|_{L^{\infty-}} \\
\lesssim  \left|  \partial^{\tilde{\gamma}^{'}} \tilde{g} (|f|^{2})(x+h) - \partial^{\tilde{\gamma}^{'}} \tilde{g} (|f|^{2})(x) \right|
| \partial^{\delta_{1}} f(x+h)|
+ \left| \partial^{\delta_{1}} f(x+h) - \partial^{\delta_{1}} f(x) \right| \left| \partial^{\tilde{\gamma}^{'}} \tilde{g} (|f|^{2})(x) \right| \\
\lesssim  \left| \partial^{\delta_{1}} f(x+h) - \partial^{\delta_{1}} f(x) \right| \langle \partial^{\delta_{1}} f(x+h) \rangle \nonumber
\end{array}
\nonumber
\end{equation}
Hence using the boundedness of the Riesz transforms we get  $ B \lesssim \| \partial^{\delta_{1}} f \|_{\dot{B}^{\alpha+}_{\infty-, \infty-}}  \langle \| \partial^{\delta_{1}} f \|_{L^{\infty}} \rangle
\lesssim \| D^{|\delta_{1}|} f \|_{\dot{B}^{\alpha+}_{\infty-, \infty-}} \langle \| \partial^{\delta_{1}} f \|_{L^{\infty}} \rangle \lesssim   \langle \| f \|_{H^{k- \frac{1}{4}}} \rangle^{C} $. \\
If we deal with $Y_{b}$ then we choose $ ( \bar{Q}_{i}, \bar{R}_{i} ) := \frac{2(n+1)}{n-2} (1,1+) $,
$(\bar{Q}_{t,1}, \bar{R}_{t,1}) :=  \frac{2(n+1)}{n-2} (1,1)$, and
$(\bar{Q}_{t,2}, \bar{R}_{t,2}) :=  \frac{2(n+1)}{n-1} (1,1)$. Observe that
$ \left\| \langle D \rangle^{\bar{\gamma}_{i}} u \right\|_{L_{t}^{\frac{2(n+1)}{n-2}} L_{x}^{\frac{2(n+1)}{n-2}+}
(I)} \lesssim (\delta')^{c} \langle X_{k- \frac{1}{4}} \left( I,u \right) \rangle^{C} $ by using a similar scheme as that in
(\ref{Eqn:EstPartialGamma2}). If we deal with $Z_{j,b}^{'}$ then we have $\left\| X^{''}  \partial^{\tilde{\gamma}^{'}} \tilde{g}(|u|^{2})
\right\|_{L_{t}^{\infty} L_{x}^{\infty} (I)} \lesssim  \langle X_{k- \frac{1}{4}} \left( I,u \right) \rangle^{C}   $.
There are again two options. If $ | \gamma_{j}^{'} | \geq |\bar{\gamma}_{t}| $ then let
 $ ( \bar{Q}^{'}_{i}, \bar{R}^{'}_{i} ) := \frac{2(n+1)}{n-2} (1,1) $,
 $ ( \bar{Q}^{'}_{t,1}, \bar{R}^{'}_{t,1},\bar{Q}^{'}_{t,2}, \bar{R}^{'}_{t,2}) := \frac{2(n+1)}{n-2} (1,1,1,1) $,
 $ ( \bar{Q}^{'}_{j,1}, \bar{R}^{'}_{j,1} ) := \frac{2(n+1)}{n-2} (1,1) $, and
 $ ( \bar{Q}^{'}_{j,2}, \bar{R}^{'}_{j,2} ) := \frac{2(n+1)}{n-1} (1,1) $. Observe from
 $ \left\| \langle D \rangle^{\bar{\gamma}_{t}^{+}} f \right\|_{L^{\frac{2(n+1)}{n-2}}} \lesssim
\left\| \langle D \rangle^{k - \frac{3}{4}} f \right\|_{L^{r}} $ and (\ref{Eqn:InterpOneHalf}) that \\
 $ \left\| \langle D \rangle^{\bar{\gamma}_{t}} u \right\|_{L_{t}^{\frac{2(n+1)}{n-2}} L_{x}^{\frac{2(n+1)}{n-2}}
(I)} \lesssim \| u \|^{\theta}_{L_{t}^{\frac{2(n+1)}{n-2}} L_{x}^{\frac{2(n+1)}{n-2}} (I)}
\left\| \langle D \rangle^{\bar{\gamma}_{t}^{+}}  u \right\|^{1- \theta}_{L_{t}^{\frac{2(n+1)}{n-2}} L_{x}^{\frac{2(n+1)}{n-2}} (I)}
\lesssim  (\delta')^{c} \langle X_{k- \frac{1}{4}} \left( I,u \right) \rangle^{C} $. If $ | \gamma_{j}^{'} | \leq |\bar{\gamma}_{t}| $ then
 $ ( \bar{Q}^{'}_{j,1}, \bar{R}^{'}_{j,1}, \bar{Q}^{'}_{j,2}, \bar{R}^{'}_{j,2}) := \frac{2(n+1)}{n-2} (1,1,1,1) $,
 $ ( \bar{Q}^{'}_{i}, \bar{R}^{'}_{i}, \bar{Q}^{'}_{t,1}, \bar{R}^{'}_{t,1}  ) := \frac{2(n+1)}{n-2} (1,1,1,1) $ and
 $ ( \bar{Q}^{'}_{t,2}, \bar{R}^{'}_{t,2} ) := \frac{2(n+1)}{n-1} (1,1) $. Hence combining again the estimates above
with similar embeddings as those below (\ref{Eqn:InterpOneHalf}) we see that $ Y_{b} + Z_{j,b}^{'} \lesssim  (\delta ')^{c}
 \langle  X_{k- \frac{1}{4}} \left( I,u \right) \rangle^{C} $.

\subsection{Appendix $B$}
\label{Subsec:AppB}

In this appendix we prove Proposition \ref{Prop:LocalWell} by using a standard fixed point argument and standard techniques. \\
\\
Let $\delta:= \delta(M) >  0$  be a positive constant small enough such that all the estimates (and statements) below are true.  \\
\\
We define for some $C^{'} > 0$ large enough the following spaces

\begin{equation}
\begin{array}{l}
\mathcal{Z}_{1} := \mathcal{B} \left( \mathcal{C} \left( [0,T_{l}], H^{k} \right) \cap
\mathcal{C}^{1} \left( [0,T_{l}], H^{k-1} \right) \cap F([0,T_{l}]); C^{'} M \right), \; \text{and} \\
\mathcal{Z}_{2} := \mathcal{B} \left(  L_{t}^{\frac{2(n+1)}{n-2}} L_{x}^{\frac{2(n+1)}{n-2}} ([0,T_{l}]) ; 2 \delta \right) \cdot
\end{array}
\nonumber
\end{equation}
Here $\mathcal{B}(\mathcal{E}; \bar{r})$ denotes the closed ball centered at the origin with radius $\bar{r} > 0$ in the normed space $\mathcal{E}$. $ \mathcal{Z}_{1} \cap \mathcal{Z}_{2}$ is a closed space of the Banach space

\begin{equation}
\begin{array}{ll}
\mathcal{Z} & := \mathcal{C} \left( [0,T_{l}], H^{k} \right) \cap \mathcal{C}^{1} \left( [0,T_{l}], H^{k-1} \right)
\cap L_{t}^{\frac{2(n+1)}{n-2}} L_{x}^{\frac{2(n+1)}{n-2}} ([0,T_{l}]) \cap F([0,T_{l}]) :
\end{array}
\end{equation}
therefore it is also a Banach space. Let $\Psi$ be defined by

\begin{equation}
\begin{array}{ll}
 u \in  \mathcal{Z}_{1} \cap \mathcal{Z}_{2} & \rightarrow \Psi(u) :=  \cos{ \left( t \langle D \rangle \right)} u_{0} + \frac{\sin{ \left( t \langle D \rangle \right)}}{\langle D \rangle} u_{1} -
\int_{0}^{t} \frac{\sin{ \left( (t-  t') \langle D \rangle \right)}}{ \langle D \rangle } \left(  |u(t')|^{1_{2}^{*} - 2} u(t') g(|u(t')|) \right) \; d t^{'}
\end{array}
\nonumber
\end{equation}
\hphantom \\ \\
In the sequel we  prove  that  $\Psi(\mathcal{Z}_{1} \cap \mathcal{Z}_{2}) \subset \mathcal{Z}_{1} \cap \mathcal{Z}_{2}$ and that $\Psi$ is a contraction.
With these two results we can apply the fixed point theorem. Therefore Proposition \ref{Prop:LocalWell} holds.

\subsubsection{$\Psi \left( \mathcal{Z}_{1} \cap \mathcal{Z}_{2} \right) \subset \mathcal{Z}_{1} \cap \mathcal{Z}_{2}$}

Let $r$ be such that $\frac{n-2}{2(n+1)} + \frac{n}{r} = \frac{n}{2} - \frac{1}{2} $. Let $\tilde{k}$ be the number defined in Section $3$.
From (\ref{Eqn:StrichEst}), the Sobolev embedding $ H^{1 - \frac{1}{2},r} \hookrightarrow L^{\frac{2(n+1)}{n-2}} $, (\ref{Eqn:LeibnCompos}), and similar arguments as those in
(\ref{Eqn:InitContDalphX}), (\ref{Eqn:InitInterpStrichLittle}), and (\ref{Eqn:InitInterpOneHalf}), we see that

\begin{equation}
\begin{array}{l}
\| u \|_{L_{t}^{\frac{2(n+1)}{n-2}} L_{x}^{\frac{2(n+1)}{n-2}}([0,T_{l}])} -
\| u_{l,0} \|_{L_{t}^{\frac{2(n+1)}{n-2}} L_{x}^{\frac{2(n+1)}{n-2}}([0,T_{l}])} \\
\lesssim  \left\| u_{nl,0} \right\|_{L_{t}^{\frac{2(n+1)}{n-2}} H^{\frac{1}{2},r} ([0,T_{l}]) } \\
\lesssim \left\| \langle D \rangle^{\frac{1}{2}} \left( |u|^{1_{2}^{*} - 2} u g(|u|) \right) \right\|_{ L_{t}^{\frac{2(n+1)}{n+3}} L_{x}^{\frac{2(n+1)}{n+3}} ([0,T_{l}])} \\
\lesssim \left\| \langle D \rangle^{\frac{1}{2}} u  \right\|_{L_{t}^{\frac{2(n+1)}{n-1}} L_{x}^{\frac{2(n+1)}{n-1}} ([0,T_{l}])}
\left(
\| u \|^{1_{2}^{*}-2}_{L_{t}^{\frac{2(n+1)}{n-2}} L_{x}^{\frac{2(n+1)}{n-2}} ([0,T_{l}]) } +
\| u \|^{1_{2}^{*}-2}_{L_{t}^{\frac{2(n+1)}{n-2}} L_{x}^{\frac{2(n+1)}{n-2}+} ([0,T_{l}])} \| u \|^{C}_{L_{t}^{\infty} L_{x}^{1_{2}^{*}}([0,T_{l}]) }
\right) \\
\lesssim \delta^{1+} \langle M \rangle^{C}
\end{array}
\label{Eqn:StrichSmall}
\end{equation}
By Lemma \ref{lem:NonlinearControl}, (\ref{Eqn:StrichEst}), and
(\ref{Eqn:StrichEst2}), we also get $ X_{k} \left( \Psi(u),[0,T_{l}] \right) \lesssim  \left\| (u_{0},u_{1} ) \right\|_{H^{k} \times H^{k-1}}
+ \delta^{\bar{c}} \langle M \rangle^{\bar{C} +1} \leq C' M$. Hence $\Psi \left( \mathcal{Z}_{1} \cap \mathcal{Z}_{2} \right) \subset \mathcal{Z}_{1} \cap \mathcal{Z}_{2}$.

\subsubsection{$\Psi$ is a contraction}
\hphantom{a} \\
\\
\underline{Note}: Before starting the proof, we replace ``$\delta$'' with ``$\delta^{'}$'' in Appendix $A$. In the proof we will often refer to portions of the text or to paragraphs written in Appendix $A$, taking into account this substitution. \\
\\
Let $h( z,\bar{z}) :=|z|^{1_{2}^{*}- 2} z g(|z|) $. \\
\\
Assume that $n \in \{ 3,4 \}$. \\
\\
From the fundamental theorem of calculus and norm conservation properties by taking the conjugate we get

\begin{equation}
\begin{array}{l}
\left\| \Psi(u) - \Psi(v) \right\|_{\mathcal{Z}_{1} \cap \mathcal{Z}_{2}} \\ \lesssim
\left\|  \langle D \rangle^{ k - \frac{1}{2}} \left( h(u,\bar{u}) - h(v, \bar{v}) \right) \right\|_{L_{t}^{\frac{2(n+1)}{n+3}} L_{x}^{\frac{2(n+1)}{n+3}}([0,T_{l}])} \\
\lesssim \sup_{\tau \in [0,1]} \sum \limits_{q \in \{ z ,\bar{z} \}}
\left[
\begin{array}{l}
\left\| \langle  D \rangle^{k - \frac{1}{2}} \left( \p_{q} h( w_{\tau}, \overline{w_{\tau}}) \right) \right\|_{L_{t}^{\frac{2(n+1)}{5}} L_{x}^{\frac{2(n+1)}{5}} ([0,T_{l}])  }
 \| u - v \|_{L_{t}^{\frac{2(n+1)}{n-2}} L_{x}^{\frac{2(n+1)}{n-2}} ([0,T_{l}]) } \\
+ \left\| \p_{q} h( w_{\tau}, \overline{w_{\tau}}) \right\|_{L_{t}^{\frac{n+1}{2}} L_{x}^{\frac{n+1}{2}} ([0,T_{l}])}
\left\| \langle D \rangle^{k- \frac{1}{2}} (u - v) \right\|_{L_{t}^{\frac{2(n+1)}{n-1}} L_{x}^{\frac{2(n+1)}{n-1}} ([0,T_{l}]) }
\end{array}
\right]
\end{array}
\label{Eqn:DiffPsi1}
\end{equation}
Here $ w_{\tau} := (1- \tau) u + \tau v $. We first estimate  $ \left\| \p_{q} h( w_{\tau}, \overline{w_{\tau}}) \right\|_{L_{t}^{\frac{n+1}{2}} L_{x}^{\frac{n+1}{2}} ([0,T_{l}])} $.
By using similar arguments as those used in (\ref{Eqn:LowNormg})

\begin{equation}
\begin{array}{ll}
\left\| \p_{q} h( w_{\tau}, \overline{w_{\tau}}) \right\|_{L_{t}^{\frac{n+1}{2}} L_{x}^{\frac{n+1}{2}} ([0,T_{l}])}  \\
\lesssim \| w_{\tau} \|^{1_{2}^{*} - 2}_{L_{t}^{\frac{2(n+1)}{n-2}} L_{x}^{\frac{2(n+1)}{n-2}} ([0,T_{l}])}
+ \| w_{\tau} \|^{1_{2}^{*} - 2}_{L_{t}^{\frac{2(n+1)}{n-2}} L_{x}^{\frac{2(n+1)}{n-2}+} ([0,T_{l}])}
\| w_{\tau} \|^{C}_{L_{t}^{\infty} L_{x}^{1_{2}^{*}} ([0,T_{l}]} \\
 \lesssim \delta^{c} \langle M \rangle^{C} \cdot
\end{array}
\label{Eqn:EstParthwtau}
\end{equation}
We then estimate
$ Y_{k} := \left\| \langle D \rangle^{k - \frac{1}{2}} \left( \partial_{q} h (w_{\tau}, \overline{w_{\tau}}) \right)
\right\|_{L_{t}^{\frac{2(n+1)}{5}} L_{x}^{\frac{2(n+1)}{5}} ([0,T_{l}]) }$. We write
$ k - \frac{1}{2} = m + \alpha $ with $ 0 \leq \alpha < 1 $ and $m \in \mathbb{N}$. Proceeding as in  (\ref{Eqn:InterpHr}) we have
$ Y_{k} \lesssim \left\| \langle D \rangle^{\alpha} \partial_{q} h (w_{\tau}, \overline{w_{\tau}}) \right\|_{L_{t}^{\frac{2(n+1)}{5}} L_{x}^{\frac{2(n+1)}{5}} ([0,T_{l}])}
+  \sum \limits_{\gamma \in \mathbb{N}^{n}: |\gamma|=m } \left\| \langle D \rangle^{\alpha} \partial^{\gamma}
\partial_{q} h (w_{\tau}, \overline{w_{\tau}} )
\right\|_{L_{t}^{\frac{2(n+1)}{5}} L_{x}^{\frac{2(n+1)}{5}} ([0,T_{l}])} $. We get from (\ref{Eqn:LeibnCompos})

\begin{equation}
\begin{array}{l}
 \left\| \langle D \rangle^{\alpha} \partial_{q} h(w_{\tau}, \overline{w_{\tau}}) \right\|_{L_{t}^{\frac{2(n+1)}{5}} L_{x}^{\frac{2(n+1)}{5}} ([0,T_{l}])} \\
\lesssim \left\| \langle D \rangle^{\alpha} w_{\tau} \right\|_{{L_{t}^{\frac{2(n+1)}{n-1}} L_{x}^{\frac{2(n+1)}{n-1}} ([0,T_{l}])}}
\left(
\begin{array}{l}
\| w_{\tau} \|^{1_{2}^{*}-3}_{L_{t}^{\frac{2(n+1)}{n-2}} L_{x}^{\frac{2(n+1)}{n-2}} ([0,T_{l}]) } \\
+ \| w_{\tau} \|^{1_{2}^{*} - 3}_{L_{t}^{\frac{2(n+1)}{n-2}} L_{x}^{\frac{2(n+1)}{n-2}+} ([0,T_{l}])} \| w_{\tau} \|^{C}_{L_{t}^{\infty} L_{x}^{1_{2}^{*}} ([0,T_{l}])}
\end{array}
\right)
\\
\lesssim \delta^{c} \langle M \rangle^{C} \cdot
\end{array}
\label{Eqn:Partialqh}
\end{equation}
Hence if $m=0$ then $Y_{k} \lesssim \delta^{c} \langle M \rangle^{C} $. \\
\\
\underline{Note}: we may assume WLOG that $m > 0$. \\
\\
We have to estimate $ \left\| \langle D \rangle^{\alpha} \partial^{\gamma}  \partial_{q} h (w_{\tau}, \overline{w_{\tau}})
\right\|_{L_{t}^{\frac{2(n+1)}{5}} L_{x}^{\frac{2(n+1)}{5}} ([0,T_{l}])} $. \\
If $n=3$ (resp. $n=4$) and $m \leq 4$ (resp. $m \leq 2$) then by expanding $ \partial^{\gamma}  \partial_{q} h (w_{\tau},\overline{w_{\tau}}) $ we see that it is a finite sum of terms of the form \\
$ X^{'} := \partial^{\gamma^{'}} \tilde{g}(|w_{\tau}|^{2}) ) S_{\gamma^{'}} (w_{\tau},\overline{w_{\tau}}) (\partial^{\gamma_{1}} w_{\tau})^{\alpha_{1}}...( \partial^{\gamma_{p}} w_{\tau})^{\alpha_{p}} ( \partial^{\bar{\gamma}_{1}} \overline{w_{\tau}} )^{\bar{\alpha}_{1}}... ( \partial^{\bar{\gamma}_{p'}} \overline{w_{\tau}} )^{\bar{\alpha}_{p'}} $.
Here again $\gamma^{'} \in \mathbb{N} $ and $S_{\gamma'} \left( w_{\tau}, \overline{w_{\tau}} \right)$ is of the form $C^{'} w_{\tau}^{p_{1}} \overline{w_{\tau}}^{p_{2}}$  for some $C^{'} \in \mathbb{R}$
and some $(p_{1}, p_{2}) \in \mathbb{N}^{2}$ such that $p_{1} + p_{2} = \gamma^{'}$. Here $ p $, $ p^{'} $, $ \gamma_{1} $,..., $\gamma_{p}$, $\bar{\gamma}_{1} $,...,$\bar{\gamma}_{p'}$, $\alpha_{1}$,..., $\alpha_{p}$, $\bar{\alpha}_{1}$,...,
$ \bar{\alpha}_{p'} $ satisfy the following properties: $ p \neq 0 $ or $p' \neq 0 $,
$ \left( \gamma_{1},..., \gamma_{p},\bar{\gamma}_{1},..., \bar{\gamma}_{p'} \right) \in
\mathbb{N}^{n} \times... \times \mathbb{N}^{n} $,
there exists  $i \in \{ 1,...,p \}$ such that $\gamma_{i} \neq (0,...,0)$ or there exists $i^{'} \in \{1,..,p' \}$ such that
$\gamma_{i'} \neq (0,...,0)$, $ \left( \alpha_{1},...\alpha_{p},
\bar{\alpha}_{1},..., \bar{\alpha}_{p'} \right) \in \mathbb{N}^{*} \times ... \times \mathbb{N}^{*} $, $ \alpha_{1} |\gamma_{1}| ... + \alpha_{p} |\gamma_{p}|
+ \bar{\alpha}_{1} |\bar{\gamma}_{1}| + ... + \bar{\alpha}_{p'} |\bar{\gamma}_{p'}| = m $ and $\alpha_{1} + ... + \alpha_{p} + \bar{\alpha}_{1} + ... + \bar{\alpha}_{p'}= 1_{2}^{*} - 2 $. \\
If $n=3$ (resp. $n=4$) and $m  \leq 3 $ (resp. $m=1$) then $X^{'}$ satisfies $(A)$ or $(B)$ that are defined just below `` If $n=3$ (resp. $n=4$) and
$m \leq 4 $ (resp. $m \leq  2$) ...  ,$(B)$ defined by ''. If $n = 3$ (resp. $n=4$) and $m \geq 4 $ (resp. $m \geq 2$) then
$\partial^{\gamma} \partial_{q} h \left(w _{\tau}, \overline{w_{\tau}} \right)$ can be written as a finite sum of terms of the form $X_{1}^{'} := X^{'}$ and \\
 $ X_{2}^{'} := X^{''}  \partial^{\tilde{\gamma}^{'}} \tilde{g} (|w_{\tau}|^{2}) \left( \partial^{\gamma_{1}} w_{\tau} \right)^{\alpha_{1}} ...
\left( \partial^{\gamma_{p}}  w_{\tau} \right)^{\alpha_{p}}
 \left( \partial^{\bar{\gamma}_{1}} \overline{w_{\tau}} \right)^{\bar{\alpha}_{1}}...
 \left( \partial^{\bar{\gamma}_{p}} \overline{w_{\tau}} \right)^{\bar{\alpha}_{p'}} $. We then rewrite the paragraph
starting with  ``In the definition of $X_{2}^{'}$ above '' just below (\ref{Eqn:WriteXDer}) and finishing with `` of the type $ H^{k} \hookrightarrow L^{\infty}$ '', replacing
`` for the case $n=3$ (resp. $n=4$) and $m \leq 5$ (resp. $m \leq 3$) '' and
`` for $n=3$ (resp. $n=4$)
and $ \gamma \in \mathbb{N}^{n} $ such that $|\gamma | =5 $ (resp. $|\gamma| = 3 $) '' with
`` for the case $n=3$ (resp. $n=4$) and $m \leq 4$ (resp. $m \leq 2$) '' and
`` for $n=3$ (resp. $n=4$ ) and $ \gamma \in \mathbb{N}^{n}$ such that $ |\gamma | =4 $ (resp. $|\gamma| = 2$ )  '' respectively. \\
\\
Assume that $n=3$ (resp. $n=4$) and $ m \leq 3$ (resp. $m = 1$). Then we rewrite the text starting just after ``Assume that $n=3$ (resp. $n=4$) and
$ m \leq 4 $ (resp. $m \leq 2$)'' and finishing with `` and  $ \left( \bar{Q}_{t,2}, \bar{R}_{t,2} \right) :=  \frac{2(n+1)}{n-1} (1,1)$
that is below (\ref{Eqn:BesovEst}), replacing  `` $ 1_{2}^{*} - 2 $'', ``$ \frac{n+3}{2(n+1)} $ '',  ``$u$'' with
 `` $ 1_{2}^{*} - 3 $'', `` $\frac{5}{2(n+1)}$ '', ``$w_{\tau}$'' respectively.
In the case of $n=4$ there are other slight changes to be made: we disregard the observation starting just below
(\ref{Eqn:PropZjPrime}) with
``Hence $\bar{\gamma}_{t} \leq \frac{m}{\bar{\alpha}_{t}}$ ''
and finishing with `` $ |\gamma_{j}^{'}| \leq m - 1 $. ''; regarding $Y$: $ t=2 $, $\bar{\gamma}_{2}=1$, $\bar{\alpha}_{2} = 1$, $(Q, R ) := \frac{2(n+1)}{n-2} (1,1)$,
and $ \left( \bar{Q}_{t,2}, \bar{R}_{t,2} \right) := \frac{2(n+1)}{n-1} (1,1) $; regarding $Z^{'}_{j}$: $t=1$, $\bar{\alpha}_{1}=0$, we disregard the terms where ``$t$'' appears in (\ref{Eqn:EstZjPrime}), $j=1$, $|\gamma^{'}_{1}|=1$, $\alpha^{'}_{1}=1$, $(Q', R' ) := \frac{2(n+1)}{n-2} (1,1)$,  and $ \left( \bar{Q}^{'}_{j,2}, \bar{R}^{'}_{j,2} \right) := \frac{2(n+1)}{n-1} (1,1)$. Hence by using similar embeddings as those from (\ref{Eqn:LowNormg}) to (\ref{Eqn:InterpOneHalf}) we see that $Y_{k} \lesssim \delta^{c} \langle M \rangle^{C}$. Hence $\Psi$ is a contraction. \\
 \\
Assume that $n=3$ (resp. $n=4$) and that $ m \geq 4 $ (resp. $m \geq 2$). Then we rewrite the text starting with ``Assume that $n=3$ (resp. $n=4$)
and $m \geq 5$ (resp. $m \geq 3$) '' and finishing with `` We have $ Y_{b} + Z_{j,b}^{'} \lesssim \delta^{c} \langle X_{k- \frac{1}{4}} \left( [0,T_{l}],u \right) \rangle^{C} $. ''
just above Subsection \ref{Subsec:AppB}, replacing
`` $1_{2}^{*} - 1$ '', `` $1_{2}^{*} -2$ '', `` $\frac{2(n+1)}{n+3}$ '', ``$ \frac{n+3}{2(n+1)} $ '', ``$u$ '' with `` $ 1_{2}^{*} - 2 $ '', `` $ 1_{2}^{*} - 3 $ '',
`` $\frac{2(n+1)}{5}$'', `` $\frac{5}{2(n+1)}$ '',
``$w_{\tau}$'' respectively. In the case of $n=4$, regarding $Y_{a}$ and $Z^{'}_{j,a}$, there are again other slight changes to be made. We disregard the conclusions that we drew below (\ref{Eqn:CdtionsYaYbZpj}); regarding $Y_{a}$: $t=2$, $\bar{\gamma}_{2}=1$, $\bar{\alpha}_{2}=1$, and $ \left( \bar{Q}_{t,2} , \bar{R}_{t,2} \right):= \frac{2(n+1)}{n-1} (1,1)$; regarding $Z_{j,a}^{'}$: $t=1$, $\bar{\alpha}_{1}=0$, we disregard the terms where ``$t$'' appears in (\ref{Eqn:ZjPrimeAux}), $j=1$, $|\gamma_{1}^{'}|=1$, $\alpha_{1}^{'}=1$, and $\left( Q^{'}_{j,2}, R^{'}_{j,2} \right) := \frac{2(n+1)}{n-1} (1,1)$. Again we see that $Y_{k} \lesssim \delta^{c} \langle M \rangle^{C}$. Hence $\Psi$ is a contraction. \\
\\
Assume that $n=5$. \\
\\
Observe that $  \left( \frac{2(n+1)}{n-1}, \frac{2(n+1)}{n-2}, \frac{2n}{n-3} \right) = (3,4,5) $. We use (\ref{Eqn:StrichEst2})
and (\ref{Eqn:LeibnProd}) to get

\begin{equation}
\begin{array}{l}
\left\| \Psi(u) - \Psi(v) \right\|_{\mathcal{Z}_{1} \cap \mathcal{Z}_{2}} \\ \lesssim
\left\|  \langle D \rangle^{k - 1} \left( h(u,\bar{u}) - h(v, \bar{v}) \right) \right\|_{L_{t}^{1} L_{x}^{2}([0,T_{l}])} \\
\lesssim \sup_{\tau \in [0,1]} \sum \limits_{q \in \{ z ,\bar{z} \}}
\left[
\begin{array}{l}
\left\| \langle  D \rangle^{k -1} \left( \p_{q} h( w_{\tau}, \overline{w_{\tau}}) \right) \right\|_{L_{t}^{2} L_{x}^{\frac{10}{3}} ([0,T_{l}])  }
 \| u - v \|_{L_{t}^{2} L_{x}^{5} ([0,T_{l}]) } \\
+ \left\| \p_{q} h( w_{\tau}, \overline{w_{\tau}}) \right\|_{L_{t}^{2} L_{x}^{\frac{10}{3}} ([0,T_{l}])}
\left\| \langle D \rangle^{k- 1} (u - v) \right\|_{L_{t}^{2} L_{x}^{5} ([0,T_{l}]) }
\end{array}
\right]
\end{array}
\nonumber
\end{equation}
Let $Y_{k}:= \left\| \langle D \rangle^{k-1} \left( \partial_{q} h \left(  w_{\tau}, \overline{w_{\tau}} \right) \right)
\right\|_{L_{t}^{2} L_{x}^{\frac{10}{3}} ([0,T_{l}])} $. \\

Assume that $ 1 < k < 2 $. By (\ref{Eqn:LeibnCompos}) and by proceeding similarly as in (\ref{Eqn:LowNormg}) we get

\begin{equation}
\begin{array}{l}
Y_{k} \lesssim \left\| \langle D \rangle^{k-1} w_{\tau} \right\|_{L_{t}^{2+} L_{x}^{5-} ([0,T_{l}])}
\left\| |w_{\tau}|^{\frac{1}{3}} g(|w_{\tau}|) \right\|_{L_{t}^{\infty-} L_{x}^{10+} ([0,T_{l}]) } \\
\lesssim \left\| \langle D \rangle^{k-1} w_{\tau} \right\|_{L_{t}^{2+} L_{x}^{5-} ([0,T_{l}])}
\left(
\begin{array}{l}
\| w_{\tau} \|^{\frac{1}{3}}_{L_{t}^{\infty-} L_{x}^{\frac{10}{3}+} ([0,T_{l}])} \\
+ \| w_{\tau} \|^{\frac{1}{3}}_{L_{t}^{\infty-} L_{x}^{\frac{10}{3}++} ([0,T_{l}])}
\| w_{\tau} \|^{C}_{L_{t}^{\infty} L_{x}^{1_{2}^{*}} ([0,T_{l})]}
\end{array}
\right)
\\
\lesssim \delta^{c} \langle M \rangle^{C} \cdot
\end{array}
\nonumber
\end{equation}
In the expression above we used the embedding $ H^{k} \hookrightarrow  L^{\frac{10}{3}} $ to get

\begin{equation}
\begin{array}{l}
\| w_{\tau} \|_{L_{t}^{\infty-} L_{x}^{\frac{10}{3}+} ([0,T_{l}]) }  \lesssim \| w_{\tau} \|^{\theta}_{L_{t}^{\infty} L_{x}^{\frac{10}{3}} ([0,T_{l}])}
\| w_{\tau} \|^{1- \theta}_{L_{t}^{4} L_{x}^{4}([0,T_{l}])}  \lesssim \delta^{c} \langle M \rangle^{C} \cdot
\end{array}
\label{Eqn:CalcInterm1}
\end{equation}
We also used

\begin{equation}
\begin{array}{l}
\| w_{\tau} \|_{L_{t}^{\infty-} L_{x}^{\frac{10}{3}++} ([0,T_{l}]) }
\lesssim \| w_{\tau} \|^{\theta}_{L_{t}^{\infty-} L_{x}^{\frac{10}{3}+} ([0,T_{l}]) }
\| w_{\tau} \|^{1- \theta}_{L_{t}^{\infty-} L_{x}^{\frac{10}{3}+++} ([0,T_{l}]) }  \\
\lesssim \delta^{c} \langle M \rangle^{C}  \left\| \langle D \rangle^{0+} w_{\tau} \right\|^{1- \theta}_{L_{t}^{\infty-} L_{x}^{\frac{10}{3}+} ([0,T_{l}]) } \\
\lesssim \delta^{c} \langle M \rangle^{C} \cdot
\end{array}
\label{Eqn:CalcInterm2}
\end{equation}
In the expression above we used (\ref{Eqn:CalcInterm1}) and the embedding $ H^{0+, \frac{10}{3} +} \hookrightarrow L^{\frac{10}{3}+++} $. We also used the embedding
$ H^{0+,\frac{10}{3}} \hookrightarrow H^{k} $ to get

\begin{equation}
\begin{array}{l}
\left\| \langle D  \rangle^{0+} w_{\tau} \right\|_{L_{t}^{\infty-} L_{x}^{\frac{10}{3}+} ([0,T_{l}])}  \lesssim
\left\| \langle D  \rangle^{0+} w_{\tau} \right\|^{\theta}_{L_{t}^{\infty} L_{x}^{\frac{10}{3}} ([0,T_{l}])}
\left\| \langle D \rangle^{0+} w_{\tau}  \right\|^{1 - \theta}_{L_{t}^{2} L_{x}^{5} ([0,T_{l}]) } \lesssim \langle M \rangle^{C} \cdot
\end{array}
\nonumber
\end{equation}
We also have

\begin{equation}
\begin{array}{l}
\left\| \p_{q} h( w_{\tau}, \overline{w_{\tau}}) \right\|_{L_{t}^{2} L_{x}^{\frac{10}{3}} ([0,T_{l}])}
\lesssim  \| w_{\tau} \|^{\frac{4}{3}}_{L_{t}^{\frac{8}{3}} L_{x}^{\frac{40}{9}} ([0,T_{l}])}
+  \| w_{\tau} \|^{\frac{4}{3}}_{L_{t}^{\frac{8}{3}} L_{x}^{\frac{40}{9}+} ([0,T_{l}])} \| w_{\tau} \|^{C}_{L_{t}^{\infty} L_{x}^{1_{2}^{*}} ([0,T_{l}])}
\lesssim \delta^{c} \langle M \rangle^{C} \cdot
\end{array}
\nonumber
\end{equation}
In the expression above we used

\begin{equation}
\begin{array}{l}
\| w_{\tau} \|_{L_{t}^{\frac{8}{3}} L_{x}^{\frac{40}{9}} ([0,T_{l}])}  \lesssim
\| w_{\tau} \|^{\theta}_{L_{t}^{2} L_{x}^{5} ([0,T_{l}])}
\| w_{\tau} \|^{1 - \theta}_{L_{t}^{4} L_{x}^{4} ([0,T_{l}])} \lesssim \delta^{c} \langle M \rangle^{C} \cdot
\end{array}
\label{Eqn:EstFourtyNine}
\end{equation}
We also used

\begin{equation}
\begin{array}{l}
\| w_{\tau} \|_{L_{t}^{\frac{8}{3}} L_{x}^{\frac{40}{9}+} ([0,T_{l}])} \lesssim
\| w_{\tau} \|^{\theta}_{L_{t}^{\frac{8}{3}} L_{x}^{\frac{40}{9}} ([0,T_{l}])}
\| w_{\tau} \|^{1 - \theta}_{L_{t}^{\frac{8}{3}} L_{x}^{\frac{40}{9}++} ([0,T_{l}])}
\lesssim \delta^{c} \langle M \rangle^{C} \cdot
\end{array}
\nonumber
\end{equation}
Here we used the embeddings $ H^{0+,\frac{40}{9}} \hookrightarrow L^{\frac{40}{9}++} $ and
the estimate below to get

\begin{equation}
\begin{array}{l}
\left\| \langle D \rangle^{0+} w_{\tau} \right\|_{L_{t}^{\frac{8}{3}} L_{x}^{\frac{40}{9}} ([0,T_{l}])}
\lesssim \left\| \langle D \rangle^{0+} w_{\tau} \right\|^{\theta}_{L_{t}^{2} L_{x}^{5} ([0,T_{l}])}
\left\| \langle D \rangle^{0+} w_{\tau} \right\|^{1- \theta}_{L_{t}^{\infty} L_{x}^{\frac{10}{3}} ([0,T_{l}])}
\lesssim \langle M \rangle^{C} \cdot
\end{array}
\nonumber
\end{equation}
Assume that $ 2 \leq k < \frac{7}{3} $.  We define $\alpha$ to be such that $ k - 2 = \alpha $ with
$ 0  \leq \alpha < \frac{1}{3}$. We already saw (see case $ 1 <  k < 2$ ) that $ \left\| \p_{q} h( w_{\tau}, \overline{w_{\tau}}) \right\|_{L_{t}^{2} L_{x}^{\frac{10}{3}} ([0,T_{l}])} \lesssim \delta^{c} \langle
 M \rangle^{C}$. So it remains to estimate $ Y_{k} $. Expanding the gradient we have

 \begin{equation}
 \begin{array}{ll}
 Y_{k} & \lesssim  \left\| \partial_{q} h(w_{\tau}, \overline{w_{\tau}} ) \right\|_{L_{t}^{2} L_{x}^{\frac{10}{3}} ([0,T_{l}]) }
 + \left\| \langle D \rangle^{k-2} \nabla \left( \partial_{q} h(w_{\tau}, \overline{w_{\tau}}) \right) \right\|_{L_{t}^{2} L_{x}^{\frac{10}{3}} ([0,T_{l}]) } \\
 & \lesssim Y_{k,1} + Y_{k,2} + Y_{k,3} + \; \text{terms that are similar to} \; Y_{k,1}, Y_{k,2}, \; \text{and} \;  Y_{k,3} \cdot
\end{array}
\nonumber
\end{equation}
Here

\begin{equation}
\begin{array}{l}
Y_{k,1} := \left\| \langle D \rangle^{\alpha} \left( G(w_{\tau}, \overline{w_{\tau}}) \nabla w_{\tau} g(|w_{\tau}|) \right) \right\|_{L_{t}^{2} L_{x}^{\frac{10}{3}} ([0,T_{l}])}, \\
Y_{k,2} := \left\| \langle D \rangle^{\alpha} \left( G(w_{\tau}, \overline{w_{\tau}}) \nabla w_{\tau} g^{'}(|w_{\tau}|) |w_{\tau}| \right) \right\|_{L_{t}^{2}
L_{x}^{\frac{10}{3}} ([0,T_{l}])}, \; \text{and} \\
Y_{k,3} := \left\| \langle D \rangle^{\alpha} \left( G(w_{\tau}, \overline{w_{\tau}}) \nabla w_{\tau} g^{''}(|w_{\tau}|) |w_{\tau}|^{2} \right) \right\|_{L_{t}^{2}
L_{x}^{\frac{10}{3}} ([0,T_{l}])} \cdot
\end{array}
\nonumber
\end{equation}
Here $G$ is an H\"older function that is $\mathcal{C}^{1}$ except at the origin and that satisfies $G(f,\bar{f}) \approx |f|^{\frac{1}{3}}$ and $ |G^{'}(f,\bar{f})| \approx |f|^{-\frac{2}{3}} $. We only estimate $Y_{k,1}$: the other terms are estimated similarly. We write $Y_{k,1} \lesssim Y_{k,1,a} + Y_{k,1,b}$ with

\begin{equation}
\begin{array}{l}
Y_{k,1,a} := \left\| \langle D \rangle^{k-1} w_{\tau} \right\|_{L_{t}^{3-} L_{x}^{\frac{30}{7}+} ([0,T_{l}]) }  \left\| G ( w_{\tau},\overline{w_{\tau}} ) g(|w_{\tau}|)
\right\|_{L_{t}^{6+} L_{x}^{15-} ([0,T_{l}])}, \; \text{and} \\
Y_{k,1,b} := \left\| \langle D \rangle^{\alpha} \left(  G( w_{\tau},\overline{w_{\tau}}) g(|w_{\tau}|) \right) \right\|_{L_{t}^{\infty-} L_{x}^{10+} ([0,T_{l}]) }
\| \nabla w_{\tau} \|_{L_{t}^{2+} L_{x}^{5-} ([0,T_{l}])} \cdot
\end{array}
\nonumber
\end{equation}
We first estimate $Y_{k,1,a}$. We get from the embedding $H^{1} \hookrightarrow  L^{\frac{10}{3}}$

\begin{equation}
\begin{array}{l}
\left\| \langle D \rangle^{k-1} w_{\tau} \right\|_{L_{t}^{3-} L_{x}^{\frac{30}{7}+} ([0,T_{l}]) } \lesssim
\left\| \langle D \rangle^{k-1} w_{\tau} \right\|^{\theta}_{L_{t}^{\infty} L_{x}^{\frac{10}{3}} ([0,T_{l}]) }
\left\| \langle D \rangle^{k-1} w_{\tau} \right\|^{1- \theta}_{L_{t}^{2} L_{x}^{5} ([0,T_{l}])}
\lesssim \langle M \rangle^{C} \cdot
\end{array}
\nonumber
\end{equation}
Proceeding similarly  as in (\ref{Eqn:LowNormg}) we have

\begin{equation}
\begin{array}{ll}
\left\| G \left( w_{\tau}, \overline{w_{\tau}} \right) g(|w_{\tau}|)\right\|_{L_{t}^{6+} L_{x}^{15-} ([0,T_{l}])}
& \lesssim \| w_{\tau} \|^{\frac{1}{3}}_{L_{t}^{2+} L_{x}^{5-} ([0,T_{l}])}
+ \| w_{\tau} \|^{\frac{1}{3}}_{L_{t}^{2+} L_{x}^{5}([0,T_{l}]) }
\| w_{\tau} \|^{C}_{L_{t}^{\infty} L_{x}^{1_{2}^{*}} ([0,T_{l}])} \\
& \lesssim \delta^{c} \langle M \rangle^{C} \cdot
\end{array}
\nonumber
\end{equation}
In the expression above we used

\begin{equation}
\begin{array}{l}
\| w_{\tau} \|_{L_{t}^{2+} L_{x}^{5-} ([0,T_{l}])} \lesssim \| w_{\tau} \|^{\theta}_{L_{t}^{2} L_{x}^{5} ([0,T_{l}])}
\| w_{\tau} \|^{1 - \theta}_{L_{t}^{4} L_{x}^{4} ([0,T_{l}])} \lesssim \delta^{c} \langle M \rangle^{C} \cdot
\end{array}
\nonumber
\end{equation}
We also have

\begin{equation}
\begin{array}{l}
\| w_{\tau} \|_{L_{t}^{2+} L_{x}^{5} ([0,T_{l}])}  \lesssim \| w_{\tau} \|^{\theta}_{L_{t}^{2+} L_{x}^{5-}([0,T_{l}])}
 \| w_{\tau} \|^{1- \theta}_{L_{t}^{2+} L_{x}^{5+} ([0,T_{l}])}  \lesssim \delta^{c} \langle M \rangle^{C} \cdot
\end{array}
\nonumber
\end{equation}
Here we used the embedding $H^{0+,5-} \hookrightarrow L^{5+}$ and the estimate below

\begin{equation}
\begin{array}{l}
\| \langle D \rangle^{0+} w_{\tau} \|_{L_{t}^{2+} L_{x}^{5-} ([0,T_{l}]) }
\lesssim  \| \langle D \rangle^{0+} w_{\tau} \|^{\theta}_{L_{t}^{2} L_{x}^{5} ([0,T_{l}])}
\| \langle D \rangle^{0+} w_{\tau} \|^{1 - \theta}_{L_{t}^{\infty} L_{x}^{\frac{10}{3}} ([0,T_{l}])}
\lesssim \langle M \rangle^{C} \cdot
\end{array}
\nonumber
\end{equation}
We then estimate $Y_{k,1,b}$. We get from $ \left\| \langle D \rangle^{k-1} f \right\|_{L^{\frac{10}{3}}} \lesssim  \| f \|_{H^{k}} $

\begin{equation}
\begin{array}{ll}
\| \nabla w_{\tau} \|_{L_{t}^{2+} L_{x}^{5-} ([0,T_{l}])} & \lesssim \left\| \langle D \rangle^{k-1} w_{\tau} \right\|_{L_{t}^{2+} L_{x}^{5-} ([0,T_{l}])} \\
& \lesssim \left\|  \langle D \rangle^{k-1} w_{\tau} \right\|^{\theta}_{L_{t}^{2} L_{x}^{5} ([0,T_{l}])}
\left\|  \langle D \rangle^{k-1} w_{\tau} \right\|^{1 - \theta}_{L_{t}^{\infty} L_{x}^{\frac{10}{3}} ([0,T_{l}])} \\
& \lesssim  \langle M \rangle^{C} \cdot
\end{array}
\nonumber
\end{equation}
We have
$ \left\| \langle D \rangle^{\alpha} \left( G(w_{\tau}, \overline{w_{\tau}} ) g(|w_{\tau}|) \right) \right\|_{L_{t}^{\infty-} L_{x}^{10+} ([0,T_{l}])} \lesssim Z_{1} + Z_{2} $ with
$ Z_{1} :=  \left\|  G(w_{\tau}, \overline{w_{\tau}} )  g(|w_{\tau}|)  \right\|_{L_{t}^{\infty-} L_{x}^{10+}([0,T_{l}])} $ and
$ Z_{2} := \left\| D^{\alpha}  \left( G(w_{\tau}, \overline{w_{\tau}} )  g(|w_{\tau}|) \right) \right\|_{L_{t}^{\infty-} L_{x}^{10+}([0,T_{l}])} $.
We first estimate $Z_{1}$. Proceeding similarly as in (\ref{Eqn:LowNormg})

\begin{equation}
\begin{array}{l}
Z_{1} \lesssim \| w_{\tau} \|^{\frac{1}{3}}_{L_{t}^{\infty-} L_{x}^{\frac{10}{3}+} ([0,T_{l}]) }
+ \| w_{\tau} \|^{\frac{1}{3}}_{L_{t}^{\infty-} L_{x}^{\frac{10}{3}++} ([0,T_{l}])} \langle M \rangle^{C} \lesssim
\delta^{c} \langle M \rangle^{C} \cdot
\end{array}
\nonumber
\end{equation}
In the expression above we used (\ref{Eqn:CalcInterm1}) and (\ref{Eqn:CalcInterm2}). \\
We then estimate $Z_{2}$. Assume that $\alpha > 0 $.  We recall the following lemma:

\begin{lem}{ (see \cite{triroyjensen})}

Let $ 0 <  \alpha^{'} < 1 $. Let $r$ and $\beta$ be such that $\alpha^{'} < \beta < 1$ and $r \beta \geq 1$. Let
$H: \mathbb{R}^{2} \rightarrow \mathbb{R}^{2}$ be a H\"older continuous function with exponent $\beta$, which is $\mathcal{C}^{1}$
(except at the origin) and which satisfies $ |H(f,\bar{f})| \approx |f|^{\beta} $ and $|H^{'}(f,\bar{f})| \approx |f|^{\beta -1 }$.
Let $ 1- \beta \gg \epsilon > 0 $. Then

\begin{equation}
\begin{array}{ll}
\left\| H(f,\bar{f}) g(|f|) \right\|_{\dot{B}_{r,r}^{\alpha^{'}}} & \lesssim  \| f \|^{\beta}_{\dot{B}^{\frac{\alpha^{'}}{\beta}}_{\beta r , \beta r}}
+ \| f \|^{\beta + \epsilon}_{\dot{B}^{\frac{\alpha^{'}}{\beta + \epsilon}}_{(\beta + \epsilon)r,(\beta + \epsilon)r}} \cdot
\end{array}
\label{Eqn:BesovEst2}
\end{equation}

\label{lem:BesovEst}
\end{lem}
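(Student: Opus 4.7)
\emph{Plan.} My plan is to use the equivalent difference characterization of the homogeneous Besov norm,
$$\|F\|^{r}_{\dot{B}^{\alpha'}_{r,r}} \approx \int_{\mathbb{R}^n} \frac{\|F(\cdot+h) - F(\cdot)\|^{r}_{L^r}}{|h|^{n+\alpha' r}}\,dh,$$
which is valid since $0 < \alpha' < \beta < 1$ and $r\beta \geq 1$. Applied with $F := H(f)g(|f|)$, the problem reduces to a pointwise comparison of $|F(x+h) - F(x)|$ with $|f(x+h) - f(x)|$ together with pointwise values of $|f|$.

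The first step is a pointwise estimate of the schematic form
$$|H(z)g(|z|) - H(w)g(|w|)| \lesssim |z-w|^{\beta} + |z-w|^{\beta+\epsilon} + |z-w|^{\beta}\bigl(|z|^{\epsilon} + |w|^{\epsilon}\bigr),$$
obtained from the decomposition $H(z)g(|z|) - H(w)g(|w|) = (H(z)-H(w))g(|z|) + H(w)(g(|z|)-g(|w|))$. The $\beta$-H\"older assumption on $H$ handles the first bracket and extracts a factor $|z-w|^{\beta}$, while the slow-variation bound $|g(|z|)-g(|w|)| \lesssim |z-w|^{\epsilon}$ (coming from $|sg'(s)| \lesssim 1$ and integration against $s^{-1+\epsilon}$) together with $|H(w)| \lesssim |w|^{\beta}$ and the triangle inequality $|z|^{\epsilon} \leq |z-w|^{\epsilon} + |w|^{\epsilon}$ yields the remaining terms.

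Substituting into the integral characterization produces three pieces. The first, $\int |h|^{-n-\alpha' r}\|f(\cdot+h)-f(\cdot)\|_{L^{\beta r}}^{\beta r}\,dh$, matches exactly the norm $\|f\|^{\beta r}_{\dot{B}^{\alpha'/\beta}_{\beta r,\beta r}}$, because the Besov exponent $\alpha'/\beta \cdot \beta r = \alpha' r$ lines up with the weight $|h|^{-n-\alpha' r}$; this gives the first summand of \eqref{Eqn:BesovEst2}. The second piece, with an additional $|z-w|^{\epsilon}$ factor, analogously produces $\|f\|^{(\beta+\epsilon) r}_{\dot{B}^{\alpha'/(\beta+\epsilon)}_{(\beta+\epsilon) r,(\beta+\epsilon) r}}$, matching the second summand.

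The main obstacle is the mixed third piece $\int |h|^{-n-\alpha' r}\!\int |f(x+h)-f(x)|^{\beta r}\bigl(|f(x+h)|^{\epsilon r}+|f(x)|^{\epsilon r}\bigr)\,dx\,dh$, in which the difference has degree $\beta r$ but the total degree in $f$ is $(\beta+\epsilon)r$, so it does not directly fit either Besov pattern. I would handle it by H\"older's inequality in $x$ with conjugate exponents $\tfrac{\beta+\epsilon}{\beta}$ and $\tfrac{\beta+\epsilon}{\epsilon}$,
$$\int |f(x+h)-f(x)|^{\beta r}|f(x)|^{\epsilon r}\,dx \leq \|f(\cdot+h)-f(\cdot)\|_{L^{(\beta+\epsilon)r}}^{\beta r}\,\|f\|_{L^{(\beta+\epsilon)r}}^{\epsilon r},$$
followed by a H\"older in $h$ (after a dyadic splitting at $|h|=1$ to control the large-$|h|$ tail of $|h|^{-n-\alpha' r}$) that combines the difference quotient into $\|f\|^{\beta r}_{\dot{B}^{\alpha'/(\beta+\epsilon)}_{(\beta+\epsilon)r,(\beta+\epsilon)r}}$ and absorbs the residual $L^{(\beta+\epsilon)r}$ factor using the Sobolev/Besov embedding provided by the hypothesis $r\beta \geq 1$ together with $\epsilon \ll 1-\beta$. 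These exponent conditions also keep the Besov smoothness indices $\alpha'/\beta$ and $\alpha'/(\beta+\epsilon)$ strictly inside $(0,1)$, so that the difference characterization remains applicable at each step.
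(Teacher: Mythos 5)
The paper imports Lemma~\ref{lem:BesovEst} from \cite{triroyjensen} rather than proving it, so there is no in-paper proof to compare against; I evaluate your sketch on its own. Your plan (difference characterization plus a pointwise difference estimate) is reasonable in spirit, but two steps break down. The derivation of the pointwise estimate is incorrect: for the second bracket $H(w)\bigl(g(|z|)-g(|w|)\bigr)$, combining $|H(w)|\lesssim|w|^{\beta}$ with $|g(|z|)-g(|w|)|\lesssim|z-w|^{\epsilon}$ produces $|w|^{\beta}|z-w|^{\epsilon}$, which is \emph{not} majorized by your claimed bound $|z-w|^{\beta}+|z-w|^{\beta+\epsilon}+|z-w|^{\beta}\bigl(|z|^{\epsilon}+|w|^{\epsilon}\bigr)$: take $|w|=N$, $z=w+1$ so $|z-w|=1$; the term is $N^{\beta}$ while the bound is $\approx N^{\epsilon}$. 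The pointwise estimate you state is in fact true, but proving it in the regime $1\leq|z-w|\leq|w|$ requires the sharper bound $g'(s)\lesssim 1/s$, giving $|H(w)||g(|z|)-g(|w|)|\lesssim|w|^{\beta-1}|z-w|\lesssim|z-w|^{\beta}|w|^{\epsilon}$; the crude H\"older modulus of $g$ that you invoke is not enough once it is multiplied by $|w|^{\beta}$.

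More seriously, the mixed piece does not close via the H\"older in $h$ you propose. After H\"older in $x$ you are left to estimate $\|f\|^{\epsilon r}_{L^{(\beta+\epsilon)r}}\int|h|^{-n-\alpha'r}\|\Delta_{h}f\|^{\beta r}_{L^{(\beta+\epsilon)r}}\,dh$, and any H\"older in $h$ that extracts $\bigl(\int|h|^{-n-\alpha'r}\|\Delta_{h}f\|^{(\beta+\epsilon)r}_{L^{(\beta+\epsilon)r}}\,dh\bigr)^{\beta/(\beta+\epsilon)}$ leaves a residual factor $\bigl(\int_{|h|\leq 1}|h|^{-c}\,dh\bigr)^{\epsilon/(\beta+\epsilon)}$ with $c\geq n$, which diverges; your dyadic splitting at $|h|=1$ handles only the benign large-$|h|$ tail, and the small-$|h|$ singularity is the obstruction. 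A route that avoids the mixed term altogether is to split the nonlinearity first: $H(z)g(|z|)=g(0)H(z)+H(z)\bigl(g(|z|)-g(0)\bigr)$. The first summand is H\"older-$\beta$ with $|H(z)|\approx|z|^{\beta}$; for the second, $g(s)-g(0)\lesssim\min(s^{2},s^{\epsilon})$ and $|H'(z)|\approx|z|^{\beta-1}$ give $\bigl|\bigl(H(g-g(0))\bigr)'(z)\bigr|\lesssim|z|^{\beta+\epsilon-1}$, hence H\"older continuity of exponent $\beta+\epsilon$ and size $|z|^{\beta+\epsilon}$. The standard composition bound $\|F(f)\|_{\dot{B}^{\alpha'}_{r,r}}\lesssim\|f\|^{\theta}_{\dot{B}^{\alpha'/\theta}_{\theta r,\theta r}}$ for $F$ H\"older-$\theta$ with $F(0)=0$ and $0<\alpha'<\theta$, applied to each piece, produces precisely the two summands in (\ref{Eqn:BesovEst2}) with nothing left over.
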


\begin{rem}
A straightforward modification of the proof of Lemma \ref{lem:BesovEst} shows that (\ref{Eqn:BesovEst2}) also holds if $g(|f|)$ is replaced with $g^{'}(|f|)|f|$ or
$g^{''}(|f|) |f|^{2}$.
\end{rem}
In the sequel we use similar arguments as those above (\ref{Eqn:DefADefB}). We have

\begin{equation}
\begin{array}{ll}
Z_{2} & \lesssim \left\| G(w_{\tau}, \overline{w_{\tau}} )  g(|w_{\tau}|) \right\|_{L_{t}^{\infty-} \dot{B}^{\alpha-}_{10+,10+} ([0,T_{l}])}
+ \left\| G(w_{\tau}, \overline{w_{\tau}} )  g(|w_{\tau}|) \right\|_{L_{t}^{\infty-} \dot{B}^{\alpha+}_{10+, 10+} ([0,T_{l}])}
\end{array}
\nonumber
\end{equation}
Hence $Z_{2}$ is bounded by Lemma \ref{lem:BesovEst}  by powers of terms of the form
$ \left\| w_{\tau} \right\|_{L_{t}^{\infty-} \dot{B}^{(3 \alpha) \pm}_{\frac{10}{3}+ , \frac{10}{3}+} ([0,T_{l}]) } $ or
$ \left\| w_{\tau} \right\|_{L_{t}^{\infty-} \dot{B}^{(3 \alpha) \pm}_{\frac{10}{3}++ , \frac{10}{3}++} ([0,T_{l}]) } $. Here
$\pm$ denotes the $+$ sign or the $-$ sign. Hence by proceeding as below (\ref{Eqn:BesovEst}) we see that $Z_{2}$
is bounded by powers of terms of the form $ A := \left\| \langle D \rangle^{(3 \alpha)++} w_{\tau} \right\|_{L_{t}^{\infty-} L_{x}^{\frac{10}{3}++} ([0,T_{l}]) }$
or $ B := \left\| \langle D \rangle^{(3 \alpha)++} w_{\tau} \right\|_{L_{t}^{\infty-} L_{x}^{\frac{10}{3}+} ([0,T_{l}]) }$.
We have

\begin{equation}
\begin{array}{l}
B  \lesssim \left\| \langle D  \rangle^{k-1} w_{\tau} \right\|^{\theta}_{L_{t}^{\infty-} L_{x}^{\frac{10}{3}+} ([0,T_{l}])}
\left\| w_{\tau} \right\|^{1- \theta}_{L_{t}^{\infty-} L_{x}^{\frac{10}{3}+} ([0,T_{l}]) } \lesssim \delta^{c} \langle M \rangle^{C} \cdot
\end{array}
\label{Eqn:EstB}
\end{equation}
Here we used (\ref{Eqn:CalcInterm1}) and $\left\|  \langle D  \rangle^{k-1} f \right\|_{L^{\frac{10}{3}}} \lesssim \| f \|_{H^{k}}$ to get

\begin{equation}
\begin{array}{l}
\left\| \langle D  \rangle^{k-1} w_{\tau} \right\|_{L_{t}^{\infty-} L_{x}^{\frac{10}{3}+} ([0,T_{l}])}
\lesssim \left\| \langle D  \rangle^{k-1} w_{\tau} \right\|^{\theta}_{L_{t}^{\infty} L_{x}^{\frac{10}{3}} ([0,T_{l}])}
 \left\| \langle D  \rangle^{k-1} w_{\tau} \right\|^{\theta}_{L_{t}^{2} L_{x}^{5} ([0,T_{l}])} \lesssim \langle M \rangle^{C} \cdot
\end{array}
\nonumber
\end{equation}
We also have

\begin{equation}
\begin{array}{ll}
A \lesssim  \left\| \langle D \rangle^{(3 \alpha)++} w_{\tau} \right\|^{\theta}_{L_{t}^{\infty-} L_{x}^{\frac{10}{3}+} ([0,T_{l}]) }
\left\| \langle D \rangle^{(3 \alpha) ++} w_{\tau} \right\|^{1 - \theta}_{L_{t}^{\infty-} L_{x}^{\frac{10}{3}+++} ([0,T_{l}]) } \lesssim  \delta^{c} \langle M  \rangle^{C} \cdot
\end{array}
\nonumber
\end{equation}
Here we used (\ref{Eqn:EstB}) and the embedding $ \left\| \langle D \rangle^{(3 \alpha) ++} f \right\|_{L^{\frac{10}{3}+++} } \lesssim
\left\| \langle D \rangle^{k-1} f  \right\|_{L^{\frac{10}{3}+}} $. Hence $Z_{2} \lesssim \delta^{c} \langle M \rangle^{C} $. \\
Hence $\Psi$ is a contraction.

\subsection{Appendix $C$}

The following refined Sobolev inequality for $p > q > 1 $ and $s > 0$ holds:

\begin{equation}
\begin{array}{ll}
\| f \|_{L^{p}} & \lesssim \| f \|^{1- \frac{q}{p}}_{B_{\infty,\infty}^{-\frac{qs}{p-q}}} \| \langle D \rangle^{s} f \|^{\frac{q}{p}}_{L^{q}} \cdot
\end{array}
\nonumber
\end{equation}
The proof is essentially well-known in the literature (see e.g \cite{bahchem} and references therein). For convenience we provide the reader with the proof.

\begin{proof}

Writing $ \left| P_{0} f(x) \right|^{p} =  \left| P_{0} f(x) \right|^{p-q} \left| P_{0} f(x) \right|^{q}$  we see that

\begin{equation}
\begin{array}{ll}
\| P_{0} f \|^{p}_{L^{p}} & \lesssim \| P_{0} f \|_{L^{\infty}}^{p-q} \| P_{0} f \|_{L^{q}}^{q}
\end{array}
\nonumber
\end{equation}
Hence $ \| P_{0} f \|_{L^{p}} \lesssim  \| f \|^{1- \frac{q}{p}}_{B_{\infty,\infty}^{-\frac{qs}{p-q}}} \| \langle D \rangle^{s} f \|^{\frac{q}{p}}_{L^{q}} $. \\
\\
Let $ L(x) := \sup_{N \in 2^{\mathbb{N}}} \left( N^{-\frac{qs}{p-q}}  | P_{N} f (x)| \right)$  and let
$ H(x) := \sup_{N \in 2^{\mathbb{N}}} \left( N^{s} |P_{N} f(x)| \right) $. Elementary considerations and the Paley-Littlewood theorem
show that

\begin{equation}
\begin{array}{ll}
\| H \|_{L^{q}} & \lesssim   \left\| \left( \sum \limits_{N \in 2^{\mathbb{N}}} N^{2s} |P_{N} f|^{2} \right)^{\frac{1}{2}}  \right\|_{L^{q}}
 \lesssim \left\| \left( \sum \limits_{N \in 2^{\mathbb{N}}} | \tilde{P}_{N} D^{s} f|^{2} \right)^{\frac{1}{2}} \right\|_{L^{q}} \\  
 & \lesssim  \| D^{s} f \|_{L^{q}}  \lesssim  \| \langle D \rangle^{s} f \|_{L^{q}} \cdot
\end{array}
\nonumber
\end{equation}
Here $\tilde{P}_{N}$ is defined in the Fourier domain by
$ \widehat{\tilde{P}_{N} f}(\xi) := \tilde{\psi} \left( \frac{\xi}{N} \right) \hat{f}(\xi) $, with
$ \tilde{\psi}(\xi) := \frac{\psi(\xi)}{|\xi|^{s}} $ ( in other words $\tilde{P}_{N}$ is an operator that behaves like $P_{N}$
in the Fourier domain). We also have by definition of $ B_{\infty,\infty}^{-\frac{qs}{p-q}}$

\begin{equation}
\begin{array}{ll}
\| L \|_{L^{\infty}} & \lesssim \| f \|_{B_{\infty,\infty}^{-\frac{qs}{p-q}}}
\end{array}
\nonumber
\end{equation}
Let $M \in 2^{\mathbb{N}}$ to be chosen. Writing $ f(x) - P_{0} f(x) = \sum \limits_{N \in 2^{\mathbb{N}}} P_{N} f(x) $  and estimating separately the portion of the sum containing the terms $N \leq M $ and that containing the terms $N > M$  we get

\begin{equation}
\begin{array}{ll}
\left| f(x) - P_{0} f(x)  \right| & \lesssim M^{\frac{qs}{p-q}} L(x) + M^{-s} H(x) \\
& \lesssim  H^{\frac{q}{p}}(x) L^{1 - \frac{q}{p}}(x),
\end{array}
\nonumber
\end{equation}
since elementary considerations show that $ \sup \limits_{y \in \mathbb{R}^{+}}  y^{\frac{qs}{p-q}} L(x) + y^{-s} H(x)
\lesssim H^{\frac{q}{p}}(x) L^{1 - \frac{q}{p}}(x) $. Hence

\begin{equation}
\begin{array}{ll}
\left\| f - P_{0} f \right\|_{L^{p}} & \lesssim \| L \|^{1 - \frac{p}{q}}_{L^{\infty}} \| H \|^{\frac{q}{p}}_{L^{q}}  \\
& \lesssim  \| f \|^{1- \frac{p}{q}}_{B_{\infty,\infty}^{-\frac{qs}{p-q}}}
 \| \langle D \rangle^{s} f \|^{\frac{p}{q}}_{L^{q}}  \cdot
\end{array}
\nonumber
\end{equation}

\end{proof}

\subsection{Appendix $D$}

In this appendix we explain why the finiteness of the Strichartz-type norm $\| u \|_{L_{t}^{4} L_{x}^{12}(\mathbb{R})}$
and that of the norm $ \| u \|_{L_{t}^{\infty} \tilde{H}^{2}(\mathbb{R})} +  \| \partial_{t} u \|_{L_{t}^{\infty} \tilde{H}^{1}(\mathbb{R})} $
imply scattering. Here $u$ is a solution of a $3D$-loglog energy-supercritical wave equation studied in \cite{triroysmooth} with data
$(u_{0},u_{1}) \in \tilde{H}^{2} \times \tilde{H}^{1}$. \\
\\
We claim that $K^{-1}(t) \mathbf{u(t)} $ has a limit as $t \rightarrow  \pm \infty $ with

\begin{equation}
\begin{array}{ll}
\mathbf{u(t)} : = \left(
\begin{array}{l}
u(t) \\
\partial_{t} u(t)
\end{array}
\right)
, & \text{and} \;
K(t) := \left(
\begin{array}{ll}
\cos{(t D)} &  \frac{\sin{(t D)}}{D} \\
- D \sin{(t D)} & \cos{(t D)}
\end{array}
\right)
\end{array}
\cdot
\nonumber
\end{equation}
Let $\epsilon > 0$. Let $|t_{1}|$ be large enough so that the estimates below are true. Let $|t_{2}| > |t_{1}|$. Recall that $\mathbf{u(t_2)} = K(t_2- t_1) \mathbf{u(t_1)} + \int_{t_{1}}^{t_{2}} K(t_2 - t') \mathbf{F(u(t'))} \; dt'  $.   Here $\mathbf{F(u(t'))} := \left( 0, -|u|^{4} u \log^{\gamma} \left( \log( 10 + |u|^{2}) \right) (t') \right)^{T}$. Hence we see from Plancherel theorem, the Strichartz estimates for wave equations, H\"older inequality, the embedding $\tilde{H}^{2} \hookrightarrow L^{\infty}$,
and the embedding $\dot{H}^{1} \hookrightarrow L^{6} $ that

\begin{equation}
\begin{array}{ll}
\left\| K^{-1}(t_{2}) \mathbf{u(t_{2})} - K^{-1}(t_{1}) \mathbf{u(t_{1})} \right\|_{\tilde{H}^{2} \times \tilde{H}^{1}} \\
\lesssim \left\| K( t_{2} - t_{1} )  \mathbf{u(t_{1})} -  \mathbf{u(t_{2})} \right\|_{\tilde{H}^{2} \times \tilde{H}^{1}} \\
\lesssim \left\| |u|^{4} u g(|u|)  \right\|_{L_{t}^{1} L_{x}^{2}([t_1,t_2])} +
\left\| \nabla \left( |u|^{4} u g(|u|) \right) \right\|_{L_{t}^{1} L_{x}^{2}([t_1,t_2])} \\
\lesssim \| u \|^{4}_{L_{t}^{4} L_{x}^{12}([t_1,t_2])} \| u \|_{L_{t}^{\infty} L_{x}^{6}([t_1,t_2])}
+ \| u \|^{4}_{L_{t}^{4} L_{x}^{12}([t_1,t_2])} \| \nabla u \|_{L_{t}^{\infty} L_{x}^{6}([t_1,t_2])} \\
\leq \epsilon  \cdot
\end{array}
\nonumber
\end{equation}
Hence the Cauchy criterion is satisfied and there exists a limit $\mathbf{u_{\pm}} \in \tilde{H}^{2} \times \tilde{H}^{1}$ such that
$\left\| K^{-1} \mathbf{u(t)} -  \mathbf{u_{\pm}} \right\|_{\tilde{H}^{2} \times \tilde{H}^{1}} \rightarrow 0 $ as $t \rightarrow \pm \infty$.
Hence $\left\| \mathbf{u(t)} - K(t) \mathbf{u_{\pm}} \right\|_{\tilde{H}^{2} \times \tilde{H}^{1}} \rightarrow 0 $ as $t \rightarrow \pm \infty$.


\begin{thebibliography}{1}

\bibitem{bahchem} H. Bahouri, Jean-Yves Chemin, R. Danchin, \emph{Fourier Analysis and Nonlinear Partial Differential Equations}, Springer, 343, 523 p., 2011, Grundlehren der mathematischen Wissenschaften.

\bibitem{bourg} J. Bourgain, \emph{Global well-posedness of defocusing  critical NLS in the radial case}, JAMS 12 (1999), no. 1, 145-171.

\bibitem{bulutdods} A. Bulut and B. Dodson, \textit{Global well-posedness for the logarithmically energy-supercritical nonlinear wave equation with partial symmetry}, preprint.


\bibitem{duyroy} T. Duyckaerts and T. Roy, Blow-up of the critical Sobolev norm for nonscattering radial solutions of supercritical wave equations on $\mathbb{R}^{3}$, Bull. Soc. Math. France 145 (2017), no. 3, 503–573.



\bibitem{shihhsiwei} S. Hsi-Wei, \textit{Some results on scattering for log-subcritical and log-supercritical nonlinear wave equations}, Anal. PDE 6 (2013), no. 1, 1–24


\bibitem{nakimrn} K. Nakanishi, \emph{Scattering theory for the nonlinear Klein-Gordon equations with Sobolev critical power}, Internat. Math. Res. Notices 1999, no. 1, 31–60.

\bibitem{nakcpde} K. Nakanishi, \emph{Unique global existence and asymptotic behavior of solutions for wave equations}, Comm. Partial Differential Equation 24 (1999), no. 1-2, 185-221.

\bibitem{morstr} C. Morawetz and W. Strauss, Decay and scattering of solutions of a nonlinear relativistic
wave equation, Comm. Pure Appl. Math. 25 (1972), pp 1-31.

\bibitem{shat}  J. Shatah and M. Struwe, \emph{Geometric wave equations}, Courant Lecture Notes in Mathematics, 2. New York University, Courant Institute of Mathematical Sciences, New York; American Mathematical Society, Providence, RI, 1998.

\bibitem{strauss} W. Strauss, \emph{Nonlinear Wave Equations}, CMRS Regional Conf. Ser. in Math. 73. Amer. Math. Soc., Prividence, 1989.

\bibitem{taylor} M. Taylor, \textit{Tools for PDE. Pseudodifferential Operators, Paradifferential Operators, and Layer Potentials},
Mathematical Surveys and Monographs 81, AMS, Providence, R.I. 2000.

\bibitem{triroyrad} T. Roy, \textit{Scattering above energy norm of solutions of a loglog energy-supercritical Schr\"odinger
equation with radial data}, Journal of Differential Equations, 250 (2011), no. 1, 292-319. Corrigendum, 264 (2018), no. 9, 6013-6024.

\bibitem{triroyradfoc} T. Roy, \textit{Scattering above energy norm of a focusing size-dependent log energy-supercritical
Schr\"odinger equation with radial data below ground state}, preprint.

\bibitem{triroyjensen} T. Roy, \textit{On Jensen-type inequalities for nonsmooth radial scattering solutions of a loglog
energy-supercritical Schr\"odinger equation}, International Mathematics Research Notices, Volume 2020, Issue 8, April 2020, 2501–2541.

\bibitem{triroysmooth} T. Roy, \textit{Global existence of smooth solutions to a $3D$
loglog energy-supercritical wave equation}, Analysis and PDE, Vol 2, Num 3,
261-280, 2009.

\bibitem{tao} T. Tao, \textit{Global regularity for a logarithmically supercritical defocusing nonlinear wave equation
for spherically symmetric data}, J. Hyperbolic Differ. Equ. 4 (2007), no. 2, 259-265.

\bibitem{yeh} J. Yeh, \textit{Real analysis, Theory of measure and integration}, World Scientific, 2014 ($3^{rd}$ edition).


\end{thebibliography}
\end{document}